\newcommand{\ER}{\mathbb {R}}
\newcommand{\EN}{\mathbb {N}}
\newcommand{\PE}{\mathbb {P}}
\newcommand{\ES}{\mathbb{E}}
\newcommand{\bqn}{\begin{equation}}
\newcommand{\bqne}{\begin{equation*}}
\newcommand{\eqn}{\end{equation}}
\newcommand{\eqne}{\end{equation*}}
\newcommand{\Hzero}{\mathbf{(H_0)}}
\newcommand{\Hdeux}{\mathbf{(H_2)}}
\newcommand{\Hun}{\mathbf{(H_1)}}
\newcommand{\Hpun}{\mathbf{H'_1(\theta)}}
\newcommand{\Hpdeux}{\mathbf{H'_2(\theta)}}
\newcommand{\muchap}{\tilde{\mu}}
\newcommand{\kbis}{v}
\newcommand{\bee}{b}
\newtheorem{theorem}{\textnormal{\bf{T\scriptsize{HEOREM}}}}
\newtheorem{prop}{\textnormal{\bf{P\scriptsize{ROPOSITION}}}}
\newtheorem{lemme}{\textnormal{\bf{L\scriptsize{EMMA}}}}
\theoremstyle{definition}
\newtheorem{definition}{\textnormal{\bf{D}\scriptsize{EFINITION}}}
\theoremstyle{remark}
\newtheorem{Remarque}{\textnormal{\bf{R\scriptsize{EMARK}}}}
\def\restriction#1#2{\mathchoice
              {\setbox1\hbox{${\displaystyle #1}_{\scriptstyle #2}$}
              \restrictionaux{#1}{#2}}
              {\setbox1\hbox{${\textstyle #1}_{\scriptstyle #2}$}
              \restrictionaux{#1}{#2}}
              {\setbox1\hbox{${\scriptstyle #1}_{\scriptscriptstyle #2}$}
              \restrictionaux{#1}{#2}}
              {\setbox1\hbox{${\scriptscriptstyle #1}_{\scriptscriptstyle #2}$}
              \restrictionaux{#1}{#2}}}
\def\restrictionaux#1#2{{#1\,\smash{\vrule height .8\ht1 depth .85\dp1}}_{\,#2}} 
\author{Joaquin Fontbona\footnote{Department of Mathematical Engineering and Center for Mathematical Modeling,  UMI(2807) UCHILE-CNRS, Universidad de Chile,  { Casilla 170-3, Correo 3, Santiago-Chile.}  E-mail:\texttt{fontbona@dim-uchile.cl}.  }\, and 
Fabien Panloup\footnote{Institut de Math\'ematiques de Toulouse, Universit\'e Paul Sabatier \& INSA Toulouse, 135, av. de Rangueil, F-31077 Toulouse Cedex 4, France. E-mail: \texttt{fabien.panloup@math.univ-toulouse.fr}}
}
\title{Rate of convergence to equilibrium of fractional driven stochastic differential equations with some multiplicative noise}
\begin{document}
\maketitle
\begin{abstract}
{ We investigate the problem of the rate of convergence to equilibrium for ergodic stochastic differential equations driven by fractional Brownian motion with Hurst parameter $H>1/2$ and multiplicative noise component $\sigma$. When $\sigma$ is constant and for every $H\in(0,1)$, it was proved in \cite{hairer} that, under some mean-reverting assumptions, such a process converges to its equilibrium at  a rate of order $t^{-\alpha}$ where $\alpha\in(0,1)$ (depending on $H$). The aim of this paper is to extend such types of results to some multiplicative noise setting. More precisely, we show  that we can recover such convergence rates when   $H>1/2$ and   the inverse of the diffusion coefficient $\sigma$ is a Jacobian matrix.  The main novelty  of this work  is   a type of extension of Foster-Lyapunov  like techniques to this non-Markovian setting, which  allows us to put in place an asymptotic coupling scheme  such as in  \cite{hairer} without resorting to deterministic contracting properties.} 
\end{abstract}
%\begin{abstract}
%Cet article est consacré à la vitesse de convergence à l'équilibre pour des équations différentielles stochastiques multiplicatives dirigées par un mouvement brownien fractionnaire (fBm). Dans le cas additif, $i.e.$ lorsque le coefficient ``diffusif'' $\sigma$ est constant et non dégénéré, cette question a été étudiée dans par Hairer \cite{hairer} qui, sous des hypothèses de contraction du coefficient de dérive en dehors d'un compact, a établi par des méthodes de couplage qu'un tel processus converge à l'équilibre à une vitesse dominée par $C t^{-\alpha}$, où $\alpha\in(0,1)$ dépend de l'indice de Hurst $H$ du fBm. L'objectif de notre travail est  d'étendre ce type de résultat au cadre multiplicatif. 
%Plus précisément, nous montrons dans le cas $H>1/2$ que le résultat précédent reste vrai avec des bornes identiques sur la vitesse de convergence. La principale nouveauté de ce travail réside dans le développement de techniques de type Foster-Lyapounov dans ce cadre non markovien, nous permettant de conserver un schéma de couplage similaire à \cite{hairer} sans faire appel à des propriétés de contraction déterministes. 
%\end{abstract}

\bigskip
\noindent \textit{Keywords}: Stochastic Differential Equations; Fractional Brownian Motion; Multiplicative noise; Ergodicity; Rate of convergence to equilibrium; Lyapunov function; Total variation distance.

\medskip
\noindent \textit{AMS classification (2010)}: 60G22, 37A25.

\section{Introduction}
Stochastic Differential Equations (SDEs) driven by a fractional Brownian motion (fBm) have been introduced to model 
random evolution phenomena whose noise has long range dependence properties. 
Indeed, beyond the historical motivations in Hydrology and Telecommunication for the use of fBm (highlighted e.g in \cite{MVn68}), recent applications of dynamical systems driven by this process include challenging issues in Finance \cite{Gua06}, Biotechnology~\cite{Odde-al96} or Biophysics~\cite{Jeon-al11,Kou08}.  

\noindent 
\smallskip The study of  the long-time behavior (under some stability properties) for fractional SDEs has been developed by Hairer \cite{hairer},  Hairer and Ohashi \cite{hairer2}, and by Hairer and Pillai \cite{hairer-pillai} (see also~\cite{Arnold98,crauel,GKN09} for another setting called 
random dynamical systems and \cite{cohen-panloup,cohen-panloup-tindel} for some results of approximations of stationary solutions)  who introduced a  suitable notion of stationary solutions for these a priori non-Markov SDE's and  extended some of the tools of the Markovian theory to this setting. In particular, criteria for uniqueness of the invariant distribution are provided in the three above papers in  different settings, namely: additive noise, multiplicative noise with $H>1/2$ and multiplicative noise with $H\in(1/3,1/2)$ (in an hypoelliptic context), respectively.\\

\noindent 
\smallskip When uniqueness holds for the invariant distribution, a challenging question is that of the rate of convergence to this equilibrium. In \cite{hairer}, the author proved that in the additive noise setting, the process converges in total variation to the stationary regime with a rate upper-bounded by  $ C_\varepsilon t^{-(\alpha-\varepsilon)}$ for any $\varepsilon>0$,  
with 
\bqn
\alpha=\begin{cases}\frac{1}{8}&\textnormal{if $H\in (\frac{1}{4},1)\backslash\left\{\frac{1}{2}\right\}$}\\
 H(1-2H)&\textnormal{if $H\in(0,\frac{1}{4}]$.}
\end{cases}
\eqn
But, to the best of our knowledge, no result of rate of convergence exists in the multiplicative setting. The aim of the current paper is to extend the results of \cite{hairer} to the multiplicative setting when $H>1/2$.

\smallskip
\noindent More precisely, we deal   with an  $\ER^d$-valued process $(X_t)_{t\ge0}$ which is a
solution to the following SDE
\begin{equation}\label{fractionalSDE0}
dX_t=b(X_t)dt+ \sigma(X_t)dB_t
\end{equation}
where $b:\ER^{d}\rightarrow\ER^d$ and $\sigma:\ER^{d}\rightarrow \mathbb{M}_{d,d} $  are (at least) continuous functions, and where $ \mathbb{M}_{d,d}$ is the  set of $d\times d$ real matrices. In~\eqref{fractionalSDE0}, $(B_t)_{t\ge0}$ is  a $d$-dimensional fractional Brownian motion  with Hurst parameter $H\in(\frac{1}{2},1)$,  $H$-fBm for short. Note that under some Hölder regularity assumptions on the coefficients (see $e.g.$ \cite{Nualart02,Coutin12} for background), (strong) existence and uniqueness hold for the solution to \eqref{fractionalSDE0} starting from $x_0\in\ER^d$. 

\noindent 
\smallskip Introducing the Mandelbrot-Van Ness representation of the fractional Brownian motion, 
\bqn\label{eq:mandel}
B_t=\alpha_H\int_{-\infty}^{0} (-r)^{H-\frac{1}{2}} \left(dW_{r+t}-dW_r\right),\quad t\ge0,
\eqn
where $(W_t)_{t\in\ER}$ is a two-sided $\ER^d$-valued Brownian Motion and $\alpha_H$ is a normalization coefficient depending on $H$, 
$(X_t, (B_{s+t})_{s\le 0})_{t\ge0}$ can be realized through a Feller transformation $({\cal Q}_t)_{t\ge0}$ on the product space $\ER^d\times{\cal W}_{\theta,\delta}$
 ($\theta\in(1/2,H)$ and $\theta+\delta\in(H,1)$) 
whose definition is recalled in \eqref{eq:HH} (we refer to \cite{hairer2} for more rigorous background on this topic). In particular,
an initial distribution of this dynamical system is a distribution $\mu_0$ on $\ER^d\times{\cal W}_{\theta,\delta}$. In probabilistic words, an initial distribution is the distribution of  a couple $(X_0, (B_s)_{s\le0})$ where $(B_s)_{s\le0}$ is an $\ER^d$-valued$H$-fBm on $(-\infty,0]$.

\smallskip
\noindent Then, such an initial distribution is classically called an
invariant distribution if it is invariant by the transformation ${\cal Q}_t$ for every $t\ge0$. However, the concept of uniqueness of invariant distribution is slightly different from the classical setting. Actually, we say that uniqueness of the invariant distribution holds if the stationary regime, that is,  the distribution ${\bar Q}\mu$ of the whole process $(X_t^\mu)_{t\ge0}$ with initial distribution $\mu$,  is unique;  in other words, this concept of uniqueness corresponds to the classical one up to identification   by  the equivalence relation: $\mu\sim\nu \Longleftrightarrow \bar{Q}\mu\sim\bar{Q}\nu$, see \cite{hairer2} for background.  In harmony with the previous concept, coupling two paths issued of $\mu_0$ and $\mu$, where the second one denotes an invariant distribution of $({\cal Q}_t)_{t\ge0}$, consists (classically) in finding a stopping time ${\tau_{\infty}}$
such that $(X_{t+{\tau_{\infty}}}^{\mu_0})_{t\ge0}= (X_{t+{\tau_{\infty}}}^{\mu})_{t\ge0}$. Thus, a rate of convergence in total variation can be deduced from bounds  established on 
$\PE({\tau_{\infty}}>t)$, $t\ge0$. 

\smallskip
\noindent Now, let us briefly recall the coupling strategy of \cite{hairer}. First, one classically waits that the paths get close. Then, at each trial, the coupling attempt is divided in two steps. First, one tries in Step 1 to stick or cluster the positions within an interval of length $1$. Then, in Step 2, one tries to ensure that the paths stay clustered until $+\infty$.  Actually, oppositely to the Markovian case where the paths stay naturally together after a clustering (by putting the same noise on each coordinate), the main difficulty here is that, due to the memory, staying together is costly. In other words, this property can be ensured only with help of a  non trivial coupling of the noises. We thus talk of \textit{asymptotic coupling}. If one of the two previous steps fails, we will begin a new attempt but only after a (long) waiting time which is called Step $3$. During this step, we again wait for the paths to get close, but also wait for the memory of the coupling cost to become sufficiently small, in order to start  a new trial only after  a weak influence  of the past is granted.

\smallskip
\noindent In the previous construction, the fact that $\sigma$ is constant is fundamental to ensure the two following properties:
\begin{itemize}
\item{} If two fBms $B^1$ and $B^2$ differ by a drift term, then two solutions $X^1$ and $X^2$ of \eqref{fractionalSDE0} respectively directed by 
$B^1$ and $B^2$ also differ by a drift term. This allows in particular to use Girsanov Theorem to build the coupling in Step 1.   
\item{} Under some ``convexity'' assumptions on the drift away from a compact {set}, two paths $X^1$ and $X^2$ directed by the same fBm (or more precisely, by two slightly different paths) get closer and the distance between the two paths can be controlled deterministically. 
\end{itemize}

In the present paper,  $\sigma$ is not constant and the two above properties are no longer valid. The challenge then is to  extend the applicability of the previous coupling scheme to such a situation. The replacement of  each of the above properties  requires us to deal with different (though related) difficulties.   In order to be able to extend the  Girsanov argument  used in Step 1 to a non constant $\sigma$, we will restrain ourselves to diffusion coefficients  for which some injective function  of two copies of the process differs by a drift term whenever their driving fBm do. A natural  assumption  on $\sigma$ granting the latter property is  that $x\mapsto\sigma^{-1}(x)$  is (well-defined and is)  a Jacobian matrix. This will be the setting of the present paper.

 As concerns a suitable substitution  of the second lacking property, a natural (but to our knowledge so far not explored) idea is  to attempt to extend Meyn-Tweedie techniques (see $e.g.$ \cite{DownMeynTweedie} for background) to the fractional setting. More precisely, even if the paths do not get closer to each other deterministically, one could expect that  some Lyapunov assumption could eventually  make the two paths return to some compact set simultaneously. {The main contribution of the present paper is to incorporate  such a  Lyapunov-type approach into the study of long-time convergence  in the fractional diffusion setting}. As one could expect, compared to the Markovian case, the problem is much more involved. Actually, the  return  time to a compact set after a (failed) coupling attempt does not only depend on   the positions of the processes after it, but also on all the past of the fBm. Therefore,  in order that  the coupling  attempt succeeds   with lower-bounded probability, one needs to establish some controls on the  past behavior of the fBms  that drive the two copies of the process,  conditionally to the failure of the previous attempts. This point is one of the main difficulties of the paper,  since, in the corresponding estimates, we  carefully have to take into account   all the deformations  of the distribution that  previously failed attempts induce. Then, we show that after a sufficiently long waiting time, conditionally on previous fails the probability that the two paths be in a compact set and  that the influence  of past noise on the future be controlled, is lower-bounded.  Bringing all the estimates together yields a global control of the coupling time and a rate of convergence which is similar to the one in \cite{hairer} in the additive noise case.

We notice that the application of the previous ideas to fractional SDE with more general diffusion coefficients can be considered. This  would in particular require to  extend a part of our computations and estimates to a framework where less regularity is available. Such an extension remains by the moment open.

\medskip

{In  Section \ref{section2} we detail our assumptions and state our main result, namely Theorem \ref{theo:principal}. The scheme of its proof, based on the previous described  coupling strategy,  is then given.  The proof of Theorem \ref{theo:principal} is achieved in Sections \ref{sec:step12}, \ref{sec:Kadmis}
and \ref{sec:prooftheoprinc}, which are  outlined at the end of Section \ref{section2}.}

\section{Assumptions and main result}\label{section2}
We begin by listing a series of notations and definitions.
\begin{itemize}
\item{} The scalar product and the Euclidean norm  on $\ER^d$ are respectively denoted by $(\,|\,)$ and $|\,.\,|$. 
\item{} The non explicit constants will be usually denoted by $C$ and may change from line to line.
\item{} The space ${\cal C}([0,+\infty),\ER^d)$ denotes the space of continuous functions on $[0,+\infty)$ endowed with the topology of uniform convergence on compact spaces. 
\item{} For some given $a,b\in\ER$, with $a,b$, $L^2([a,b],\ER^d)$ denotes the space of Lebesgue-measurable functions such that 
$\|g\|_{[a,b],2}=\sqrt{\int_a^b |g(s)|^2ds}<+\infty$.
\item{} For  some positive $\theta$ and $\delta$ such that $\theta\in(1/2,H)$ and $\theta+\delta\in(H,1)$, ${\cal W}_{\theta,\delta}$ denotes  the Polish space ${\cal W}_{\theta,\delta}$   which is the completion of ${\cal C}_0^{\infty}((-\infty,0],\ER^d)$ (the space of ${\cal C}^\infty$-functions $f:(-\infty,0]\rightarrow\ER^d$ with compact support and  $f(0)=0$)  for the norm
\bqn\label{eq:HH}
\|f\|_{{\cal W}_{\theta,\delta}}=\sup_{-\infty<s<t\le 0}\frac{|f(t)-f(s)|}{|t-s|^\theta(1+|t|^\delta+|s|^{\delta})}.
\eqn
\item{} For some real numbers $a<b$ and for $\theta\in(0,H)$, we denote by ${\cal C}^\theta([a,b],\ER^d)$ the set of functions $f:\ER_+\rightarrow\ER^d$ such that 
 $$ \|f\|_{\theta}^{a,b}=\sup_{a\le s< t\le b}\frac{ |f(t)-f(s)|}{(t-s)^\theta}<+\infty,$$

\item{} Let  $\sigma:\ER^{d}\rightarrow \mathbb{M}_{d,d} $  be a ${\cal C}^{1}$-function and $\gamma\in(0,1]$. We say that $\sigma $ is  $(1+\gamma)$-Lipschitz if  for every $i,j\in\{1,\ldots,d\}$, the following norm is finite:
\begin{equation}
  \label{eq:1+gammaLips}
  \|\sigma_{i,j}\|_{1 + \alpha}= \sup_{ x \in \mathbb R^d } |\nabla \sigma_{i,j}(x) | + \sup_{x, y \in \mathbb R^d}\frac{|\nabla \sigma_{i,j}(x)-\nabla \sigma_{i,j}(y)|}{|x-y|^\gamma},
\end{equation}
where for a given ${\cal C}^1$-function $f:\ER^d\rightarrow\ER$, $\nabla f=(\partial_{x_1} f,\ldots,\partial_{x_d} f)'$.
\item{} We also denote by ${\cal E}\!{\cal Q}(\ER^d)$ the set of {\em Essentially Quadratic} functions, that is ${\cal C}^1$-functions
$V:\ER^d\rightarrow (0,\infty)$ such that $\nabla V$ is Lipschitz continuous,
\[
 \liminf \frac{V(x)}{|x|^2}>0, \quad \mbox{  and }\quad 
  \abs[\nabla V]\le C \sqrt{V}.
 \]
 where $C$ is a positive constant. Note that these assumptions ensure that $\inf V=\min V$ is positive and that $\sqrt{V}$ is Lipschitz continuous (since it has a bounded gradient) which in turns implies that $V$ is subquadratic. 
 \end{itemize}
\smallskip
 
\noindent  Now, let us introduce the assumptions:

\smallskip
\noindent $\Hzero$: $b$ is a locally Lipschitz and sublinear function and  $\sigma$ is a bounded $(1+\gamma)$-Lipschitz continuous function $\gamma\in(\frac{1}{H}-1,1]$).

\smallskip
\noindent This condition ensures existence and uniqueness of solutions for \eqref{fractionalSDE0}. Note that the condition on the derivative of $\sigma$ only plays a role for uniqueness and in particular, is not fundamental for what follows. However, for the sake of simplicity, we choose to assume  this assumption throughout the paper.

Now, we turn to some more specific assumptions $\Hun$ and $\Hdeux$. The first one is a Lyapunov-stability assumption:

\smallskip
\noindent$\Hun$: There exists a function $V:\ER^d\rightarrow\ER$  of ${\cal E}\!{\cal Q}(\ER^d)$, there exist some positive $\beta_0$ and $\kappa_0$ such that
$$\forall\;x\;\in\ER^d,\quad (\nabla V(x)|b(x))\le \beta_0-\kappa_0 V(x).$$

{\begin{Remarque}
The above assumption will be used to ensure that the paths live with high probability in a compact set of $\ER^d$ (depending of the coercive function $V$). Note that in the classical diffusion setting, such a property holds with some less restrictive Lyapunov assumptions. Here, the assumptions essentially allow us to consider only (attractive) drift terms whose growth is linear at infinity. On the one hand, due to $\Hzero$, one can not consider drift terms with (strictly) superlinear growth at infinity and on the other hand, Assumption $\Hun$ combined with the fact that $V$ is subquadratic implies more or less that $b$ can not have (strictly) sublinear growth at infinity (this would be possible if $V$ had an exponential growth). These restrictions are mainly due to the lack of martingale property for the integrals driven by fBms.
\end{Remarque} }

\smallskip
\noindent
 Then, when the paths are in this compact set, one tries classically to couple them with positive probability. But, as mentioned before, the specificity of the non-Markovian setting is that the coupling attempts generate a cost for the future (in a sense made precise later). In order to control this cost (or, more precisely, in order that we can  couple the paths  using suitably controlled drift terms) we need the following assumption:

\smallskip
\noindent $\Hdeux$ $\forall x\in\ER^d$, $\sigma(x)$ is invertible and there exists a ${\cal C}^1$-function $h=(h_1,\ldots,h_d):\ER^d\rightarrow\ER^d$ such that
the Jacobian matrix $\nabla h=(\partial_{x_j} h_i)_{i,j\in\{1,\ldots,d\}}$ satisfies $\nabla h(x)=\sigma^{-1}(x)$ and such that $\nabla h$ is a locally Lipschitz function on $\ER^d$.

\smallskip
\noindent
\begin{Remarque}
$\rhd$ Under $\mathbf{(H_0)}$ and $\Hdeux$, $h$ is a global ${\cal C}^1$-diffeomorphism from $\ER^d$ to $\ER^d$. Indeed, under these assumptions,  $\nabla h$ is invertible everywhere and $x\mapsto [(\nabla h)(x)]^{-1}=\sigma(x)$ is bounded on $\ER^d$. The  property (which will be important in the sequel) then follows  from the Hadamard-Lévy theorem (see e.g. \cite{ruadulescu}). 

\smallskip
\noindent $\rhd$ As mentioned before, the main restriction here is to assume that $x\mapsto \sigma^{-1}(x)$ is a Jacobian matrix. However, there is no assumption on $h$ (excepted smoothness). In particular, $\sigma^{-1}$ does not need to bounded. This allows us to consider for instance some cases where $\sigma$ vanishes at infinity.

\smallskip
\noindent Let us exhibit some simples classes of SDEs  for which $\Hdeux$ is fulfilled. First, it contains the class of non-degenerated SDEs for which each coordinate is directed by one real-valued fBm. More precisely,  if for every $i\in\{1,\ldots,d\}$, 
$$dX_t^i=b_i(X_t^1,\ldots,X_t^d)dt+\sigma_i(X_t^1,\ldots,X_t^d) dB_t^i$$
where $\sigma_i:\ER^d\rightarrow\ER$ is a   ${\cal C}^1$ positive function, Assumption $\Hdeux$ holds. Next, let us also remark that,  since  $\nabla (P h)=P\nabla h$ for any square matrix $P$,  the following equivalence holds:
$$\textnormal{$\exists$ $h$ as in  $\Hdeux$  s.t. $\nabla h=\sigma^{-1}$ $\Longleftrightarrow$ $\exists$ $\tilde{h}$ as in  $\Hdeux$ and an invertible matrix $P$ s.t. $\sigma^{-1}=P\nabla \tilde{h}$},$$
Thus, assumption  $\Hdeux$ also holds true if:
\bqne
\sigma (x)=P {\rm Diag}\left(\sigma_1(x_1,\ldots,x_d),\ldots, \sigma_d(x_1,\ldots,x_d)\right)
\eqne
where $P$ is a given invertible $d\times d$-matrix and for every $i\in\{1,\ldots,d\}$ $\sigma_i$ is as before.
\end{Remarque}

We can now  state our main result. One denotes by ${\cal L}((X_{t}^{\mu_0})_{t\ge0})$ the distribution of the process on ${\cal C}([0,+\infty),\ER^d)$ starting from an initial distribution $\mu_0$
 and by $\bar{\cal Q}\mu$ the distribution of the stationary solution (starting from an invariant distribution $\mu$). The distribution $\bar{\mu}_0(dx)$ denotes the first marginal of $\mu_0(dx,dw)$.
\begin{theorem}\label{theo:principal} Let $H\in(1/2,1)$. Assume $\Hzero$, $\Hun$ and $\Hdeux$. Then, existence and uniqueness hold for the invariant distribution $\mu$ (up to equivalence). Furthermore, 
for every initial distribution $\mu_0$ for which there exists $r>0$ such that { $\int |x|^r \bar{\mu}_0(dx)<\infty$}, for each $\varepsilon>0$ there exists $C_{\varepsilon}>0$ such that 
$$\| {\cal L}((X_{t+s}^{\mu_0})_{s\ge0})-\bar{\cal Q}\mu\|_{TV}\le C_\varepsilon t^{-(\frac{1}{8}-\varepsilon)}.$$
\end{theorem}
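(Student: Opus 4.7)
The overall strategy follows the asymptotic--coupling paradigm of Hairer \cite{hairer}, with two essential adaptations: exploit $h$ from $\Hdeux$ to recover an additive--noise structure to which Girsanov can be applied, and replace the deterministic contraction used in \cite{hairer} by a Foster--Lyapunov mechanism based on $\Hun$. For the first point, since $\nabla h=\sigma^{-1}$ and $h$ is a global $\mathcal{C}^1$-diffeomorphism of $\ER^d$ (as noted in the remark following $\Hdeux$), the classical chain rule valid in the Young regime $H>1/2$ turns \eqref{fractionalSDE0} into
\[
 dY_t = \tilde b(Y_t)\,dt + dB_t, \qquad Y_t:=h(X_t),\quad \tilde b=(\nabla h\cdot b)\circ h^{-1}.
\]
Thus, if two copies $X^{(1)},X^{(2)}$ are driven by fBms differing by a drift, then $h(X^{(1)})-h(X^{(2)})$ also differs by a drift, and a Girsanov change of measure built on this drift, restricted to a compact set where $\sigma^{-1}$ is controlled, implements the clustering in Step~1 exactly as in \cite{hairer}.

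The second adaptation is the heart of the argument. I would first derive a pathwise Lyapunov inequality for $V(X_t)$ by applying the Young chain rule, using $(\nabla V|b)\le\beta_0-\kappa_0 V$ together with $|\nabla V|\le C\sqrt{V}$, and controlling the noise term $\int_0^t(\nabla V(X_s)\,|\,\sigma(X_s)\,dB_s)$ through its H\"older seminorm (hence by a Gaussian functional of the driving fBm). This yields, for each compact $K_R=\{V\le R\}$, explicit return-- and occupation--time estimates in which the tail cost is an appropriate norm $\mathcal{N}(B)$ of the driving fBm. When this estimate is inserted in the analysis of Steps~1--2 above, with the additional Girsanov cost generated by the coupling drift, it delivers a lower bound on the probability that a single trial succeeds, provided the positions of both copies and the relevant H\"older-type norms of the past noise lie in some controlled sets.

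\textbf{Main obstacle.} The technical core is iterating the three-step scheme under the conditional law given $k$ failed attempts. Each attempt modifies the noise on a finite window through a Girsanov density whose entropic cost is explicit; however, because of the Mandelbrot--Van Ness kernel \eqref{eq:mandel}, such a modification propagates influence into the arbitrarily distant future, so at the $k$-th trial the past of the driving fBm is no longer Gaussian. I would therefore introduce a class of \emph{admissible} deformed pasts for which the Lyapunov estimate above still applies with uniform constants, show inductively that after a sufficiently long Step~3 waiting time $\Delta_k$ (chosen polynomial in $k$ as in \cite{hairer}) the conditional law of both positions and both past-influence norms re-enters this class with probability bounded away from $0$, and combine this with the Step~1 clustering probability to obtain a uniform lower bound on the success probability of each trial. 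The bookkeeping must quantify how much the coupling cost of Step~2 accumulates along the attempts; the tuning of $\Delta_k$ against this accumulated cost is where the exponent $\tfrac{1}{8}$ emerges, via the same scaling balance as in the additive case of \cite{hairer}.

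The conclusion follows from the standard coupling inequality: if $\tau_\infty$ is the first time beyond which the two copies agree forever, then
\[
 \|\mathcal{L}((X^{\mu_0}_{t+s})_{s\ge 0})-\bar{\mathcal{Q}}\mu\|_{TV}\;\le\;2\,\PE(\tau_\infty>t),
\]
and the preceding estimates yield $\PE(\tau_\infty>t)\le C_\varepsilon t^{-(1/8-\varepsilon)}$. Existence of an invariant distribution $\mu$ is obtained from a Krylov--Bogolyubov tightness argument based on $\Hun$, while uniqueness up to the equivalence $\sim$ of \cite{hairer2} is a direct consequence of the successful coupling applied to any two invariant initial distributions. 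The moment assumption $\int|x|^r\,\bar\mu_0(dx)<\infty$ enters only to absorb the initial position into the Lyapunov-based tail estimates during the first waiting period.
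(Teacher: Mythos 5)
Your plan coincides with the paper's proof in all essentials: Step 1 is carried out exactly as you describe, via the Young chain rule applied to $h$ (so that $h(X^{1})-h(X^{2})$ differs by a drift) combined with a localized Girsanov argument on a compact set, the deterministic contraction of \cite{hairer} is replaced by a pathwise Foster--Lyapunov estimate on $V(X_t)$ controlled through H\"older norms of the driving fBm, the conditional control of the deformed past given failed attempts is encoded in an admissibility class (the paper's $(K,\alpha)$-admissibility), and the exponent $\tfrac18$ arises from balancing the Step-2 failure tails against the Step-3 durations (the paper obtains any rate $p<\alpha(1-2\alpha)$ and optimizes over $\alpha\in(0,1/2)$). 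The only divergence is a detail of calibration: the paper must take the Step-3 waiting times geometric in the trial index ($a_k=\varsigma^k$ with $\varsigma>1$ arbitrarily close to $1$), not polynomial as you suggest, because its memory estimates conditional on $k-m$ subsequent failures lose a factor $\delta_1^{-\rho(k-m)}$ that only geometrically growing waits can compensate.
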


\begin{Remarque}
In the previous result, the main contribution is the fact that one is able to recover the rates of the additive case. Existence and uniqueness results are not really new. However, compared with the assumptions of \cite{hairer2}, one observes that when  $x\mapsto\sigma^{-1}(x)$ is a Jacobian matrix (assumption which does not appear in \cite{hairer2}), our other assumptions are slightly less constraining. In particular, $b$ is assumed to be locally Lipschitz and sublinear (instead of Lipschitz continuous) and, as mentioned before, $x\mapsto \sigma^{-1}(x)$ does not need to bounded. Finally, remark that $\mathbf{(H_1)}$ is slightly different from the assumption on the drift of \cite{hairer} which   is a contraction condition 
out of a compact set: for any $x,y$, $(b(x)-b(y)|x-y)\le \beta_0-\kappa_0|x-y|^2$. 
This means that even in the constant setting, our work can cover some new cases. 
For instance, if $d=2$ and $b(z)=-z-\rho \cos(\theta_z) z^\perp$ 
(where $\rho\in\ER$, $\theta_z$ is the angle of $z$ and $z^\perp$ is its normal vector), 
Assumption $\mathbf{(H_1)}$ holds with $V(z)=1+|z|^2$ whereas one can check that the contraction 
condition is not satisfied if $\rho>2$. 

\end{Remarque}

\subsection{Scheme of coupling}
As explained before, the proof of Theorem \ref{theo:principal} is based on a coupling strategy similar to that of \cite{hairer}. 
Let $(B^{1}_t)_{t\in\ER}$ and $(B^{2}_t)_{t\in\ER}$ denote two fractional Brownian motions with Hurst parameter $H>1/2$. Then, denote by $(X_t^{1},X_t^{2})$, a couple  of solutions to \eqref{fractionalSDE0}:

\bqn\label{eq:eqcoup}
\begin{cases}
dX_t^{1}= b(X_t^{1})dt+\sigma(X_t^{1}) dB^{1}_t\\
dX_t^{2}= b(X_t^{2})dt+\sigma(X_t^{2}) dB^{2}_t
\end{cases}
\eqn
with initial conditions $(X_0^1,(B_t^1)_{t\le0})$ $(X_0^1,(B_t^2)_{t\le0})$. We denote by  $({\cal F}_t)_{t\ge0}$ the usual augmentation of the filtration  $(\sigma(B_s^1,B_s^2, (X_0^1,X_0^2))_{s\le t})_{t\ge0}$.   
To begin the coupling procedure without ``weight of the past'', we will certainly assume that $$(B^1_t)_{t\le0}=(B^2_t)_{t\le0}$$
and that the initial distribution $\tilde{\mu}$  of $(X^1,X^2)$ is of the form
\bqn\label{eq:mutilde}
\tilde{\mu}(dx,dw)=\mu_1(w,dx_1)\mu_2(w,dx_2)\PE_H(dw)
\eqn
where $\PE_H$ denotes the distribution of a fBm $(B_t)_{t\le0}$ on ${\cal W}_{\gamma,\delta}$ and the transitions probabilities $\mu_1(.,dx_1)$ and $\mu_2(.,dx_2)$
correspond respectively to the conditional distributions of $X_0^1$ and $X_0^2$ given $(B_t^1)_{t\le0}$. The processes  $(B^{1}_t)_{t\in\ER}$ and $(B^{2}_t)_{t\in\ER}$ can be realized through the Mandelbrot-Van Ness representation (see \eqref{eq:mandel})  with the help of some  two-sided Brownian motions respectively denoted by $W^1$ and $W^2$.  In particular, the filtration $({\cal F}_t)_{t\ge0}$  is also generated by  $(\sigma(W_s^1,W_s^2, (X_0^1,X_0^2))_{s\le t})_{t\ge0}$.

Furthermore, we will assume in all the proof that on $[0,\infty)$,  $W^1$ and $W^2$ (resp. $B^1$ and $B^2$)
differ by  a (random) drift term denoted by $g_w$ (resp. $g_B$): 
\bqn\label{eq:gw}
dW^2_t=dW^1_t+g_w(t) dt\quad \textnormal{and}\quad dB_t^2=dB_t^1+g_B(t) dt.
\eqn
Note that the functions $g_w$ and $g_B$ are linked by some inversion formulas (see \cite{hairer}, Lemma 4.2 for details). 

\smallskip
The idea is to build $g_w$ (resp. $g_B$) in order to stick $X^{1}$ and $X^{2}$. We set 
$$\tau_\infty:=\inf\{t\ge0,\; X_s^{1}=X_s^{2}\;\forall s\ge t\}.$$ 
As usual, this coupling  will be achieved after a series of trials. As mentioned in the introduction, each trial is decomposed in three steps:
\begin{itemize}
\item{} Step 1: Try to couple the positions with a \textit{controlled cost} (in a sense made precise below).
\item{} Step 2 (specific to non-Markov processes): Try to keep the paths  fastened together.
\item{} Step 3: If Step 2 fails, wait a sufficiently long time in order that in the next trial,  Step 1 be achieved with a \textit{controlled cost} and with  (uniformly lower-bounded away from $0$) probability. During this step, we suppose that $g_w(t)=0$.
\end{itemize}
Let us make  a few precisions:

\smallskip
\noindent $\rhd$ We denote by $\tau_0\ge0$ the beginning of the first trial and by $\tau_k$, $k\ge1$, the end of each trial. If $\tau_k=+\infty$,
the coupling tentative has been successful. Otherwise, $\tau_k$ is the end of Step 3 of trial $k$.
We will assume that 
$$\forall t\in(-\infty,\tau_0],\quad W^1_t=W^2_t \quad a.s. \quad \textnormal{or equivalently that}\quad g_w(t)=g_B(t)=0 \quad\textnormal{on $[-\infty,\tau_0]$.}$$
%In particular, $\mu_1$ and $\mu_2$ are realized though the same Brownian motion on $\ER_{-}$.

\smallskip
\noindent $\rhd$ About Step 1 and the ``controlled cost'': Step 1  is  carried out on each interval $[\tau_{k-1},\tau_{k-1}+1]$. The ``cost'' of coupling is represented by the function $g_w$ that one needs to 
build on $[\tau_{k-1},\tau_{k-1}+1]$ in order to get  $X^{x_1}$ and $X^{x_2}$  stuck together at time $\tau_{k-1}+1$. Oppositely to the Markovian case, this cost does not only depend  on the positions of  $X_{\tau_{k-1}}^{1}$ and $X_{\tau_{k-1}}^{2}$  but also on the past of the Brownian motions,  which have a (strong) influence on the dynamics of $B^{1}$ and $B^{2}$. This is the reason why one needs in Step 3 to wait enough before beginning a new attempt of coupling.
\medskip

In \cite{hairer}, the ``controlled cost'' concept is called ``admissibility'' (see Definition 5.6). Here, we slightly modify it and we will say that one is in position to attempt a coupling if the system is \textit{$(K,\alpha)$-admissible}. We define this concept below but need before to introduce notations. For $T\ge0$ and  a measurable function $g:\ER\rightarrow\ER$, we denote by ${\cal R}_T g$ the function defined {(when it makes sense)} by 
$$({\cal R}_T g)(t)=\int_{-\infty}^0 \frac{t^{\frac{1}{2}-H} (T-s)^{H-\frac{1}{2}}}{t+T-s} g(s) ds,\quad t\in(0,+\infty).$$
Let $g_w$ be the (random) function defined by \eqref{eq:gw}.  For a positive time $\tau$, we denote by $g_w^\tau$ the function defined by $g_w^\tau(t)=g_w(t+\tau)$, $t\in\ER$.

\medskip The following definition is relative to a fixed  $\theta\in(1/2,H)$.

\begin{definition} Let $K$ and $\alpha$ be some positive constants and  $\tau$ denote stopping time with respect to  $({\cal F}_t)_{t\in\ER}$.   We say that the  system  is $(K,\alpha)$-admissible at time $\tau$  if $\tau(\omega)<+\infty$ and if $(X_\tau^{1}(\omega),X_\tau^{2}(\omega), (W^1(\omega),W^2(\omega))_{t\le \tau})$ satisfies:
\bqn\label{hairerassumpcond}
 \sup_{T\ge0} \int_0^{+\infty} (1+t)^{2\alpha}|({\cal R}_T g_w^\tau)(t)|^2dt\le 1.
\eqn
and 
\bqn\label{Kadmiscond}
%\begin{split}
%&(X_\tau^{1}(\omega),X_\tau^{2}(\omega), (W^1(\omega),W^2(\omega))_{t\le \tau})\in \Omega_{K,\tau}\quad\textnormal{where,}
 |X_\tau^{1}(\omega)|\le K,\quad |X_\tau^{2}(\omega)|\le K, \quad  \varphi_{\tau,\varepsilon_\theta}(W^1(\omega))\le K \quad\textnormal{and}\quad \varphi_{\tau,\varepsilon_\theta}(W^2(\omega))\le K,
\eqn
where   $\varepsilon_\theta=\frac{H-\theta}{2}$ and for a given positive $\varepsilon$,
$$\varphi_{\tau,\varepsilon}(w)=\sup_{\tau\le s\le t\le \tau+1}|\frac{1}{t-s}\int_{-\infty}^{\tau-1} (t-r)^{H-\frac{1}{2}}-(s-r)^{H-\frac{1}{2}} dw_r| +\|w\|_{\frac{1}{2}-\varepsilon}^{\tau-1,\tau}$$

\end{definition}
If these two conditions hold, we will show that the coupling attempt is successful with lower-bounded probability. Thus,  we will need to ensure that at each time $\tau_k$,  the $(K,\alpha)$-admissibility also holds with lower-bounded probability. We set $\Omega_{K,\alpha,\tau}=\Omega_{\alpha,\tau}^1\cap\Omega_{K,\tau}^2$
where
\bqn\label{hairerassump}
\Omega_{\alpha,\tau}^1:=\left\{\omega, \tau(\omega)<+\infty,\;\sup_{T\ge0} \int_0^{+\infty} (1+t)^{2\alpha}|({\cal R}_T g_w^\tau)(t)|^2dt\le 1\right\},
\eqn
and 
\bqn\label{Kadmis}
%\begin{split}
%&(X_\tau^{1}(\omega),X_\tau^{2}(\omega), (W^1(\omega),W^2(\omega))_{t\le \tau})\in \Omega_{K,\tau}\quad\textnormal{where,}
\Omega_{K,\tau}^2:=\left\{\omega,\tau(\omega)<+\infty,\; |X_\tau^{1}|\le K, |X_\tau^{2}|\le K, \varphi_{\tau,\varepsilon_\theta}(W^1)\le K, \varphi_{\tau,\varepsilon_\theta}(W^2)\le K\right\},
%\end{split}
%
%\PE(|X_T^{x_1}|\le K, |X_T^{x_2}|\le K, \varphi_T(W^1)\le K, \varphi_T(W^2)\le K| T<+\infty)\ge\frac{1}{2}.
\eqn
The novelty here is the event defined in \eqref{Kadmis}. Since, contrarily to the additive noise case, we  are not able to reduce here the distance between the positions deterministically, we ask  $X^{x_1}$ and $X^{x_2}$ to be in the compact set $\bar B(0,K)=\{y , |  y |\leq K\}$ with positive probability. The same type of assumption is needed on the past of the fractional Brownian motion (which is represented by the functionals $\varphi_{\tau,\varepsilon_\theta}(W^j)$, $j=1,2$). Note that,  oppositely to the event $\Omega_{\alpha,\tau}^1$, which comes from \cite{hairer}, $\Omega_{K,\tau}^2$ can certainly not have a probability equal to $1$.  
\smallskip 
\noindent We will attempt the coupling  on $[\tau_{k-1},\tau_{k-1}+1]$ only if $\omega\in\Omega_{K,\alpha,\tau_{k-1}}$. Otherwise, we set $g_w(t)=0$ on $[\tau_{k-1},\tau_{k-1}+1]$ (and,  in this case, we certainly say that Step 1 fails).

\smallskip
\noindent $\rhd$ If Step 1 fails (which includes the case where one does not attempt the coupling), one begins Step 3 (see below). Otherwise,
one begins Step 2. Step 2 is in fact a series of trials on some intervals $I_\ell$ with length 
\bqn\label{eq:defc2}
|I_\ell|=c_2 2^\ell
\eqn
where $c_2$ is a constant larger than one which will be calibrated later on. More precisely, one successively tries to keep $X^{1}$ and $X^{2}$ as being equal on intervals $[\tau_{k-1}+1+c_2\sum_{u=1}^{\ell-1} 2^{k},\tau_{k-1}+1+c_2\sum_{u=1}^{\ell} 2^k]$ (with the convention $\sum_{\emptyset}=0$).
The exponential increase of the length of the intervals will be of first importance to ensure the success of Step 2.

\smallskip
\noindent $\rhd$   If Step 2 fails at trial $\ell$ with $\ell\ge0$ ($\ell=0$ corresponds to the case where  Step 1 fails), one begins Step 3.  We denote by $\tau_{k}^{3}$, the beginning of Step 3. As mentioned before, the aim on this interval is to wait a sufficiently long time  in order to be in an $(K,\alpha)$-admissible state with positive probability (Step $3$ ends at time  $\tau_k$, $i.e.$ at the beginning of the next attempt). This has two natural consequences. On the one hand, one assumes that
\bqn\label{eq:assumpstep3}
g_w(t)=0 \quad \textnormal{on $[\tau_{k}^3,\tau_k]$,}\quad \mbox{  so that }\quad W^1_t-W^1_{\tau_{k}^3}=W^2_t-W^2_{\tau_{k}^3} \quad \textnormal{on $[\tau_{k}^3,\tau_k]$}.
\eqn
On the other hand, the waiting time will strongly depend on the length of Step 2. Longer is Step 2, longer is the waiting time. We set
\bqn\label{eq:akl}
{\cal A}_{k,\ell}=\{\textnormal{At trial $k$, Step 2 fails after $\ell$ attempts}\}.
\eqn
We assume in the sequel that 
\bqn\label{durationstep3}
\forall \omega\in{\cal A}_{k,\ell}, \quad \tau_k-\tau_{k}^3=\Delta_3(\ell,k)\quad\textnormal{with}\quad \Delta_3(\ell,k):=c_3 a_k 2^{\beta\ell}
\eqn
where $c_3\ge 2 c_2$, $\beta\in[1,+\infty)$ and $(a_k)_{k\ge1}$ is an increasing  deterministic sequence.  We will calibrate these quantities later (see Proposition \ref{lemme:step3}). {At this stage, we can however remark a useful property for the sequel: conditionally to ${\cal A}_{k,\ell}$, the length of each step is deterministic.}
We are now ready to begin the proof. In Section \ref{sec:step12}, we focus on Steps 1 and 2 and prove that we can achieve the coupling scheme in such a way that  for every positive $K$ and $\alpha$, the probability of coupling can be lower-bounded by a constant which does not depend on $k$.
Then, in Section \ref{sec:Kadmis}, we focus on the $(K,\alpha)$-admissibility condition. In particular,  we show that for $K$ large enough, \eqref{Kadmis} holds with high probability (which does not depend on $k$). Finally, in Section \ref{sec:prooftheoprinc}, we prove Theorem \ref{theo:principal}.
 
%
%In the following the series of random times corresponding to some end of trials with be denoted by $(\tau_k$ and each. 
%Let us define the series of trials: first, we set
%$$\tau_0:=\inf\{t\ge0, (X_t^{x_1},X_t^{x_2}) \textnormal{ is $(K,\alpha)$-admissible}\}$$.
%En fait, dans la suite, le $\tau_0$ sera une fonction déterministe de $x_1$ et $x_2$. 
%
%\smallskip
%\noindent At time $\tau_0$, we begin the first trial. Step 1 is achieved on the interval $[\tau_0,\tau_0+1]$. 
%If Step 1 fails, one waits for a deterministic time which will be made precise after...
%If Step 1 succeeds, one begins Step 2 with  
%
% In the sequel, we will need to use four times related to this tr $(S_0^i)$, $i=0,1,2,3$: $S_0^0=\tau_0$, $S_0^1=\tau_0+1$ (end of Step 1),
%$S_0^2$
%we will say that the process is admissible with 

\section{Lower-bound for the successful-coupling probability}\label{sec:step12}
In this section, we detail the construction of Steps 1 and 2 with the aim of proving that if the system is $(K,\alpha)$-admissible at time $\tau_{k-1}$, then
the probability that $\Delta \tau_{k}:=\tau_k-\tau_{k-1}$ be infinite ($i.e.$ that the coupling be successful) can be lower-bounded.
The main result of this section is the next proposition.
\begin{prop} \label{prop:imp1} Let $H>1/2$. Assume that $\mathbf{(H_0)}$ and  $\Hdeux$ hold. Then, for every $K>0$ and $\alpha\in(0,H)$, Steps $1$ and $2$ can be achieved in such a way that there exists ${\delta}_0$ and $\delta_1$ in $(0,1)$ such that for every $k\ge1$, $\delta_0\le \PE(\Delta \tau_{k}=+\infty| \Omega_{K,\alpha,\tau_{k-1}})\le 1-{\delta}_1$. Furthermore, $\delta_1$ can be chosen independently of $K$.
\end{prop}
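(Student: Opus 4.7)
The plan is to exploit $\Hdeux$ to reduce the coupling problem for $(X^1,X^2)$ to an additive-type coupling for the transformed processes $h(X^i)$. Since $\nabla h=\sigma^{-1}$, the Young chain rule gives
\bqne
dh(X^i_t)=\nabla h(X^i_t)b(X^i_t)\,dt+dB^i_t,\qquad i=1,2,
\eqne
so that the difference $Z_t:=h(X^2_t)-h(X^1_t)$ evolves via
\bqne
dZ_t=\bigl(\nabla h(X^2_t)b(X^2_t)-\nabla h(X^1_t)b(X^1_t)\bigr)\,dt+g_B(t)\,dt.
\eqne
Since $h$ is a global $\mathcal{C}^1$-diffeomorphism (Hadamard--L\'evy, cf.\ the remark following $\Hdeux$), $Z\equiv 0$ is equivalent to $X^1=X^2$. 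The whole coupling problem is thus reduced to prescribing $g_B$, and hence $g_w$ through the Volterra-inversion formula (\cite{hairer}, Lemma~4.2), so that $Z$ hits zero at $\tau_{k-1}+1$ and remains there forever.

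For Step~1 on $[\tau_{k-1},\tau_{k-1}+1]$, I would impose the linear interpolation $Z_t=(\tau_{k-1}+1-t)\,Z_{\tau_{k-1}}$, which uniquely determines $g_B$ as the sum of the constant interpolation drift and a compensator for the nonlinear term $\nabla h(X^2)b(X^2)-\nabla h(X^1)b(X^1)$. The latter, by the local Lipschitz property of $\nabla h$ in $\Hdeux$, the sublinearity of $b$ in $\Hzero$ and the uniform bound $|X^i_{\tau_{k-1}}|\le K$, is Lipschitz with constant depending only on $K$. The corresponding $g_w$ on $[\tau_{k-1},\tau_{k-1}+1]$ has squared $L^2$-cost controlled by $|Z_{\tau_{k-1}}|$ and by $\varphi_{\tau_{k-1},\varepsilon_\theta}(W^i)$; both are bounded via \eqref{Kadmiscond}. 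For Step~2 on $I_\ell$, with $|I_\ell|=c_2 2^\ell$, once $Z\equiv 0$ at the start of $I_\ell$ the constraint $Z\equiv 0$ on $I_\ell$ forces $g_B$ to compensate exactly the cumulative past-fBm memory through the Mandelbrot--Van Ness kernel. The central $L^2$-bound I would establish is that, using the polynomial weight $(1+t)^{2\alpha}$ in \eqref{hairerassumpcond} and the geometric growth of $|I_\ell|$, the resulting drift satisfies $\|g_w \mathbf{1}_{I_\ell}\|_{2}^2\lesssim 2^{-2\alpha\ell}$, provided $c_2$ is chosen large enough. Summing over $\ell\ge 0$ then produces a global cost $M(K,\alpha):=\int_{\tau_{k-1}}^{+\infty}|g_w(s)|^2\,ds\le C_0(K,\alpha)$, deterministic and uniform in~$k$.

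The lower bound $\delta_0$ follows from a reverse Cauchy--Schwarz argument applied to the Radon--Nikodym density $D$ of the coupled measure w.r.t.\ $\PE$: since by construction the coupling succeeds almost surely under the coupled measure,
\bqne
1=\ES\bigl[D\,\mathbf{1}_{\mathrm{success}}\,\big|\,\Omega_{K,\alpha,\tau_{k-1}}\bigr]\le \ES\bigl[D^2\,\big|\,\Omega_{K,\alpha,\tau_{k-1}}\bigr]^{1/2}\,\PE\bigl(\mathrm{success}\,\big|\,\Omega_{K,\alpha,\tau_{k-1}}\bigr)^{1/2},
\eqne
and the standard exponential-martingale estimate gives $\ES[D^2\,|\,\cdot]\le e^{C M(K,\alpha)}$. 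For the $K$-independent upper bound $1-\delta_1$, I would observe that on $\Omega_{K,\alpha,\tau_{k-1}}$ the required drift $g_w$ is not identically zero with positive probability (otherwise $B^1=B^2$ identically, incompatible with the two independent realisations produced after a nontrivial Step~3). A quantitative lower bound on $\|g_w\|_2^2$ extracted purely from the \emph{construction} of the compensator, and not from $K$, together with a Pinsker/entropy-type inequality, yields a failure probability at least $\delta_1>0$ with $\delta_1$ depending only on universal constants.

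The main obstacle will be the sharp $L^2$-decay $\|g_w\mathbf{1}_{I_\ell}\|_2^2\lesssim 2^{-2\alpha\ell}$ for the Step~2 drift: one has to invert the Volterra-type operator relating $g_B$ and $g_w$ on intervals far in the future and carefully propagate the $(1+t)^{2\alpha}$-weight from the $(K,\alpha)$-admissibility condition into a geometric summability in~$\ell$; the calibration of $c_2$ is dictated by this estimate. A secondary difficulty, genuinely new compared with the additive setting of \cite{hairer}, is the control of the nonlinear correction $\nabla h(X^2)b(X^2)-\nabla h(X^1)b(X^1)$ in $dZ_t$, which vanishes in the additive case; handling it as a Lipschitz perturbation of the linear target problem requires the local Lipschitz property of $\nabla h$ in $\Hdeux$ together with the compact-set control of $X^1,X^2$ provided by \eqref{Kadmiscond}.
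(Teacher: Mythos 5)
Your overall strategy is the paper's: use $\Hdeux$ to pass to $Z=h(X^2)-h(X^1)$, couple on $[\tau_{k-1},\tau_{k-1}+1]$ by Girsanov, and keep the paths together through dyadic trials on which $\|g_w\mathbf{1}_{I_\ell}\|_2\lesssim 2^{-\alpha\ell}$ after calibrating $c_2$. However, your argument for the upper bound $\PE(\Delta\tau_k=+\infty|\Omega_{K,\alpha,\tau_{k-1}})\le 1-\delta_1$ is wrong. During Step 3 one sets $g_w=0$, so $W^1$ and $W^2$ have \emph{identical} increments there, not independent ones; and nothing prevents $X^1_{\tau_{k-1}}=X^2_{\tau_{k-1}}$ (e.g.\ equal initial data), in which case $g_w\equiv 0$ already realizes the coupling and the success probability is $1$. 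There is no intrinsic lower bound on $\|g_w\|_2$, hence no intrinsic failure probability, to be extracted from the construction. The paper obtains $1-\delta_1$ by fiat: with probability $\delta_1$ one simply does not attempt the coupling and sets $W^1=W^2$ on the interval. Relatedly, your scheme omits the domination of the per-trial success probability in Step 2 (the $\rho^2_b=1-b/2$ upper bound of Lemma \ref{lemme:couplingmarg}, cf.\ Remark \ref{rem:modiflem}); these upper bounds are not cosmetic — they are what Section \ref{sec:Kadmis} uses to control conditional laws given failure.

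Two further gaps. First, your single Cauchy--Schwarz/Girsanov estimate over all of $[\tau_{k-1},+\infty)$ bounds the overlap of the two path laws, but it does not by itself produce a coupling $(W^1,W^2)$ with both marginals Wiener for which the success event has that probability \emph{and} for which $W^2-W^1$ is deterministically controlled on the failure event; the latter is indispensable for restoring $(K,\alpha)$-admissibility in Step 3. This is precisely why the paper keeps the maximal-coupling construction ${\bf P}_1=\frac12\varphi_1^*\PE_W\wedge\varphi_2^*\PE_W$ plus symmetrization on $[0,1]$, and then the explicit finite-dimensional Gaussian couplings of Lemma \ref{cor:couplingexpansion} on each $I_\ell$, rather than one global change of measure. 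Second, the claim that $\nabla h(X^2)b(X^2)-\nabla h(X^1)b(X^1)$ is Lipschitz ``with constant depending only on $K$'' does not follow from $|X^i_{\tau_{k-1}}|\le K$: $\nabla h$ is only locally Lipschitz and the paths may leave $\bar B(0,K)$ during the interval. The paper must localize ($h_a$, $b_a$, the diffeomorphism $\varpi_a$ with $a=a(M,K)$) and restrict to the set $A_M$ of noise paths keeping $X^1$ in $\bar B(0,M)$ (whose probability is bounded below via the a priori estimate of Appendix A); the Girsanov lower bound is then taken only over $A_M\times\varphi(A_M)$. Finally, your linear-interpolation target for $Z$ does not make $g_B$ vanish on $[1/2,1]$ (Lemma \ref{lemma:step1}$(c)$), a property the paper gets from the singular drift $-\kappa_2\rho/\sqrt{|\rho|}$ (finite-time extinction by $t=1/2$) and then uses to bound the future cost ${\cal R}_0 g_w^{\tau_{k-1}+1}$ in Lemma \ref{lemme:step2.2}; without it your claimed decay $\|g_w\mathbf{1}_{I_\ell}\|_2^2\lesssim 2^{-2\alpha\ell}$ is not established.
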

The (uniform) upper-bound is almost obvious. Actually, at  the beginning of Step $1$, it is always possible (if necessary) to attempt the coupling with probability $1-\delta_1$ only (and to put $W^1=W^2$ otherwise). This upper-bound may appear of weak interest but in fact, it will play an important role in Section 4.

\smallskip
\noindent The lower-bound is a consequence of the combination of Equation \eqref{eq:productinfty} with Lemmas \ref{lemma:step1} and \ref{lemme:step2.2} below.
\subsection{Step 1}
\begin{lemme}\label{lemma:step1} Assume that $\mathbf{(H_0)}$ and $\Hdeux$ hold. Let $K$ and $\alpha$ denote two positive constants and  $\theta\in(1/2,H)$ be fixed. Then,  for each $k\geq 1$, $(W^1,W^2)$ can be built on $[\tau_{k-1},\tau_{k-1}+1]$ in such a way that  the following properties hold:
\begin{itemize}
\item[(a)] There exists $\tilde{\delta}_0>0$ depending only on $K$,  $\alpha$ and  $\theta\in(1/2,H)$ such that for all  $k\ge0$,  
$$\PE(X_{\tau_{k-1}+1}^1=X_{\tau_{k-1}+1}^2| \Omega_{K,\alpha,\tau_{k-1}})\ge \tilde{\delta}_0.$$
\item[(b)] There exists $C_K>0$  such that $\int_{\tau_{k-1}}^{\tau_{k-1}+1}|g_w(s)|^2ds\le C_K\quad a.s.$
\item[(c)] If Step $1$ is successful, $t\mapsto g_B(t)$ is a ${\cal C}^1$-function on $[\tau_{k-1},\tau_{k-1}+1]$  such that
$$\sup_{t\in[0,1]} |g_B(t)|\le C\quad\textnormal{and}\quad \forall t\in[1/2,1],\quad g_B(t+\tau_{k-1})=0.$$
where $C$ is a deterministic constant which does not depend on $k$.
\end{itemize}
\end{lemme}
\begin{proof} 

$(a)$ The proof of this statement is divided in five parts:

\smallskip
\noindent 
\textbf{$(i)$} Let $\theta\in(1/2,H)$ and set $\varepsilon_\theta=\frac{H-\theta}{2}$. Let $\tilde{K}$ be a positive constant. Then, there exists a deterministic constant  depending only on $\theta$, $K$ and $\tilde{K}$  denoted $C(K,\tilde{K})$ such that
$$  \forall \omega\in\Omega_{K,\tau_{k-1}}^2\cap\{\|W^1\|_{\frac{1}{2}-\varepsilon_\theta}^{\tau_{k-1},\tau_{k-1}+1}\le \tilde{K}\},\quad \sup_{t\in[0,1]} |X_{\tau_{k-1}+t}^1(\omega)|\le C(K,\tilde{K}).$$ 
The proof of this property (whose arguments are close to some in the next sections) is given in  Appendix A. Notice that $C(K,\tilde{K})\geq K$ since $|X_{\tau_{k-1}+t}^1|\leq K$
on $\Omega_{K,\tau_{k-1}}^2$. 

\smallskip
\noindent \textbf{$(ii)$} Building a function $g_B$ to couple  $(X_t^1)$ and $(X_t^2)$ at time $\tau_{k-1}+1$:  First, note that this step is strongly based on Assumption $\Hdeux$ and that  the construction is a modified version of Lemma 5.8 of \cite{hairer}.  For a given past on $(-\infty,\tau_{k-1}]$ and a given innovation path 
$(W^1_{t+\tau_{k-1}},W^2_{t+\tau_{k-1}})_{t\in[0,1]}=(w_1(t)+W^1_{\tau_{k-1}},w_2(t)+W^2_{\tau_{k-1}})_{t\in[0,1]}$ of ${\cal C}^{\frac{1}{2}-{\varepsilon_\theta}}([0,1],\ER^d)^2$ (note that  $w_1(0)=w_2(0)=0$), set $(x^{w_1}(t),x^{w_2}(t))=(X^{1}_{t+\tau_{k-1}},X^{2}_{t+\tau_{k-1}})$, $t\in[0,1]$.   Let then $(B^{w_1}_t,B^{w_2}_t)_{t\in[0,1]}:=(B^1_{t+\tau_{k-1}},B^{2}_{t+\tau_{k-1}})_{t\in[0,1]}$ denote the corresponding fBMs, defined as in \eqref{eq:mandel}.

%Let us assume that $\omega\in\Omega_{K,\alpha,\tau_k}\cap\{\|W^1\|_{\frac{1}{2}-\varepsilon_\theta}^{\tau_k,\tau_k+1}$. 
The aim now is to build, conditionally to  $\Omega_{K,\alpha,\tau_{k-1}}$ and  $(X_{\tau_{k-1}}^{1},X_{\tau_{k-1}}^{2}, (W^1,W^2)_{t\le {\tau_{k-1}}})$,  a function on $[0,1]$ denoted by $g_h$, such that whenever $w_2(t)=w_1(t)+\int_0^t g_h(s)ds$ for every $t\in[0,1]$,   then $x^{w_1}(1)=x^{w_2}(1)$.
In fact, it is more convenient to build an  associated function $f_h$ such that  $dB^{w_2}_t=dB^{w_1}_t+ f_h(t) dt$ (see \eqref{eq:gw} for background).   
%Then, for a given path of $(B^1_t)_{t\in[\tau_k,\tau_k+1]}$, we exhibit a random and continuous function
%$f_h$ which ensures that $X_{\tau_k+1}^1=X_{\tau_k+1}^2$ as soon as  on $[\tau_k,\tau_k+1]$. 

\smallskip
\noindent With the previous notations, we see from Assumption $\Hdeux$ and  a change of variable formula for Hölder functions with exponent larger than $1/2$ (see $e.g.$  \cite{zahle}, Theorem 4.3.1) that such a function $f_h$ should satisfy, for every $t\in[0,1]$,  
\bqn\label{eq:zahledev}
 h(x^{w_2}(t))-h(x^{w_1}(t))= h(x_2)-h(x_1)+\int_0^t \nabla h b(x^{w_2}(u))-\nabla h b(x^{w_1}(u))du+\int_0^t f_h(u) du
 \eqn
 where $x_i=x^{w_i}(0)$, $i=1,2$. The idea is then to build  $t\mapsto f_h(t)$ as an adapted process  such that  the distance between $h(x^{w_2}(t))$ and $h(x^{w_1}(t))$  decreases to $0$ in the interval $[0,1]$. Due to the fact that $\nabla h$ is only supposed to be locally Lipschitz continuous, such a construction will indeed  be possible with a controlled cost only if $(x^{w_1}(t))_{t\in[0,1]}$ lies in a compact set of $\ER^d$.
 
  \noindent  For a given  $a\geq K$, we thus introduce a ``localizing''  ${\cal C}^1$-diffeomorphism $\varpi_a:\ER^d\rightarrow\varpi_a(\ER^d)$ with the following properties:
$$ \restriction{\varpi}{\bar{B}(0,a)}=\restriction{Id}{\bar{B}(0,a)},\quad \|\varpi\|_\infty\le a+1\quad \textnormal{and $\exists$ a norm $\|.\|$ on $\mathbb{M}_{d,d}$ such that }\quad \|\nabla \varpi\|\le 1.$$
We set $h_a:=h\circ \varpi_a$ and introduce a system $(y_a^1(t),y_a^2(t))_{t\ge0}$  \textit{companion}   to $(h(x^{w_1}(t), h(x^{w_2}(t))_{t\ge0}$ defined by 
$(y_a^{1}(0),y_a^{2}(0))=(h_a(x_1),h_a(x_2))=(h_a(x^{w_1}(0)),h_a(x^{w_2}(0)))$ and
\begin{equation*}
\begin{cases} dy_a^1(t)=(\nabla h_a b_a)(h^{-1}(y_a^1(t))dt+ dB_t^{w_1}\\
dy_a^2(t)=(\nabla h_a b_a)(h^{-1}(y_a^2(t)))dt + (dB_t^{w_1}+f_{h}(t)dt)
\end{cases}
\end{equation*}
where $b_a:\ER^d\rightarrow\ER^d$ is a localization of $b$, $i.e.$ a Lipschitz bounded continuous function such that $b_a(x)=b(x)$ on ${\bar{B}(0,a)}$ and 
$f_{h}$ is the function defined as follow.  For a given $a$, we set
\bqn\label{def:fh}
f_{h}(t)=-\kappa_1^a \rho_{h_a}(t)-\kappa_2\frac{\rho_{h_a}(t)}{\sqrt{|\rho_{h_a}(t)|}}\quad\textnormal{(with the convention $\frac{\rho}{\sqrt{|\rho|}}=0$ if $\rho=0$)}
\eqn
where $a$ and $\kappa_2$ are positive numbers to be fixed, $\rho_{h_a}$ is the unique solution starting from $\rho_{h_a}(0)=h_a(x_2)-h_a(x_1)$ %=h(x_2)-h(x_1)$ (since $x_1,x_2\in \bar{B}(0,K)$ on $\Omega_{K,\alpha,\tau_{k-1}}$)   
to the equation:
\bqn\label{eq:rhoh}
\frac{d\rho_{h_a}}{dt}= F_a(t,\rho_{h_a}(t))
\eqn
with
\begin{equation*} F_a(t,\rho)=(\nabla h_a b_a) (h^{-1}( y_a^1(t)+\rho(t)))-(\nabla h_a b_a) (h^{-1}(y_a^1(t))) -\kappa_1^a \rho_{h_a}(t)-\kappa_2\frac{\rho_{h_a}(t)}{\sqrt{|\rho_{h_a}(t)|}},
\end{equation*}
and  
\bqn\label{eq:kappa1}
\kappa_1^a=\sup_{y_1,y_2\in \ER^d}\frac{|(\nabla h_a b_a)( h^{-1}(y_2))-(\nabla h_a b_a)(h^{-1}(y_1))|}{|y_2-y_1|}.
\eqn
Observe that  $\kappa_1^a$ is finite for each $a>0$ since $h^{-1}$ is Lipschitz continuous on $\ER^d$ and $\nabla h_a$ and $b_a$ are bounded Lipschitz continuous.  Moreover, $\rho_{h_a}(t)$ is uniquely defined  (at least) on some maximal interval $[0,t_0)$ such that $0<t_0\leq \infty$ and  $|\rho_{h_a}(t)|>0$ on $[0,t_0)$. Since by  definition of  $\kappa_1^a$ one has
$$\forall t\in [0,t_0) ,\quad \frac{d|\rho_{h_a}(t)|^2}{dt}\le -2\kappa_2 |\rho_{h_a}(t)|^{\frac{3}{2}},$$
the function $t\mapsto |\rho_{h_a}(t)|^2$ is non-increasing and, by a  standard computation,  we see that $t_0\le 2\sqrt{\rho_{h_a}(0)}/\kappa_2$. Moreover,  $\rho_{h_a}$  can then  be globally defined in a unique way   such  that
\bqn \label{eq:contrhoy}
|\rho_{h_a}(t)|\le \begin{cases} (\sqrt{|\rho_{h_a}(0)|}-\frac{\kappa_2}{2} t)^2& \textnormal{if $t\le 2\sqrt{\rho_{h_a}(0)}/\kappa_2$}\\
0& \textnormal{if $t\ge 2\sqrt{\rho_{h_a}(0)}/\kappa_2$}.
\end{cases}
\eqn
Hence,  $f_{h}$ (and thus $y_a^2$) is well-defined on $[0,\infty)$.  %(note that no localization  is needed if $(\nabla h b)\circ h^{-1}$ is Lipschitz continuous).
 It follows from this construction that 
$y_a^{2}(t)-y_a^{1}(t)=\rho_{h_a}(t)$.
Now set
$$t_a:=1\wedge \inf\{t, \max(|x^{w_1}(t)|, |x^{w_2}(t)|)>{a}\}.$$
We observe   from \eqref{eq:zahledev} that,  since $h_a(x)=h(x)$ and $b_a(x)=b(x)$ on $\bar{B}(0,a)$, one has
\bqn\label{condrhohh}
\forall t\leq t_a,\quad y_a^1(t)=h(x^{w_1}(t)),\quad y_a^2(t)=h(x^{w_2}(t))\quad\textnormal{and}\quad \rho_{h_a}(t)=h(x^{w_2}(t))-h(x^{w_1}(t))
\eqn
(notice also that $t_a>0$ on $ \Omega_{K,\alpha,\tau_{k-1}}$ since $a\geq K$). Set now
$\bar{K}=\sup_{x_1,x_2\in \bar{B}(0,K)}|h(x_2)-h(x_1)|$  and, for every $M\geq K$, 
$$A_M:=\{w_1\in{\cal C}^{\frac{1}{2}-{\varepsilon_\theta}}([0,1],\ER^d): w_1(0)=0\textnormal{ and} \sup_{t\in[0,1]}|x^{w_1}(t)|\leq M \,,  \forall \omega\in\Omega_{K,\tau_{k-1}}^2\}.$$

We are going to show that,  for suitably chosen  $a=a(M,K)\geq K$ and $\kappa_2: =4\sqrt{\bar{K}}$,  it holds   on  $\Omega_{K,\alpha,\tau_{k-1}}$  that
\begin{equation}\label{w1AMx=x}
\forall w_1\in A_M,  \forall t\in[1/2,1],\quad x^{w_1}(t)=x^{w_2}(t).
\end{equation}

%Now,  to prove the following properties:
%\begin{itemize}
%\item{}  For all $w_1\in{\cal C}^{\frac{1}{2}-\varepsilon_\theta}([0,1],\ER^d)$, for every $t\in[0,1]$, $|\rho_{h_a}(t)|\le |\rho_{h_a}(0)|$ and $\rho_{h_a}(t)=0$ if $t\ge 2\sqrt{\rho_{h_a}(0)}/\kappa_2$.
%\end{itemize}

%For the second point, let us first remark that with the notation, 
%$$t_a:=1\wedge \inf\{t, |h(x^{w_1}(t)+\rho_h(t))|>a\}\wedge \inf\{t, |h(x^{w_1}(t))|>{a}\},$$
%one deduces from \eqref{eq:zahledev} that for a given positive $a$,
%$$\forall t<t_a,\quad \rho_h(t)=h(x^{w_2}(t))-h(x^{w_1}(t)).$$

\noindent From \eqref{condrhohh}, the definition of  $t_a$,  the decrease of $|\rho_{h_a}|$ and the fact that  $|\rho_{h_a}(0)|\le \bar{K}$   on  $   \Omega_{K,\alpha,\tau_{k-1}} $, we get on that event  that   
$$\forall w_1\in A_M , \,   \sup_{t\in [0,1]} |h(x^{w_2}(t))|\le \bar{h}_M+\bar{K} \, ,$$
with  $\bar{h}_M:=\sup_{x\in \bar{B}(0,M)} |h(x)|$. 
Since $h$ is an  homeomorphism, $h^{-1}(\bar{B}(0,\bar{h}_M+\bar{K}))$ is a compact set and so 
$\hat{C}(M,K):=\sup\{|x|:  |h(x)|\le  \bar{h}_M+\bar{K}\}<+\infty$. 
We deduce that, for $a=a(M,K):=\max\{M,\hat{C}(M,K)\}+1$ it holds on    $ \Omega_{K,\alpha,\tau_{k-1}}$ that:
 %But from the very definition of $A_M$ and $\bar{K}$, one easily checks that for $a=M+\bar{K}+1$, 
$$\forall w_1\in A_M, \forall t\in[0,1],\quad \max(|x^{w_1}(t)|,|x^{w_2}(t)|)\le a ,\mbox{ hence } t_a=1. $$
Thus,  for all $ w_1\in A_M$, identities \eqref{condrhohh} hold for all $t\in [0,1]$   on the event  $   \Omega_{K,\alpha,\tau_{k-1}} $.  The choice  $\kappa_2=4\sqrt{\bar{K}}$ then ensures  that $2\sqrt{\rho_h(0)}/\kappa_2\le 1/2$ which, by the previous,  yields \eqref{w1AMx=x}.

\smallskip 

\noindent This also certainly   implies that,  on the event  $   \Omega_{K,\alpha,\tau_{k-1}} $ on has  $f_{h}(t)=0$  on $[1/2,1]$  for all $w_1\in A_M$ (this fact  will be used in Step 2). From now on, we will  assume that $f_h$ is defined by \eqref{def:fh} with $a=a(M,K)$ and $\kappa_2=4\sqrt{\bar{K}}$.\smallskip

\noindent $(iii)$ About $f_h$ and $g_h$:  let $\omega\in\Omega_{K,\alpha,\tau_{k-1}}$ and $w_1 \in{\cal C}^{\frac{1}{2}-\varepsilon_\theta}([0,1],\ER^d)$ and consider the ${\cal C}^1$-function $(f_h(t))_{t\in[0,1]}$ built above. Given this function, let us recall how one defines a function $(g_{h}(t))_{t\in[0,1]}$ which is such that 
$$ dw_t^2=dw_t^1+g_{h}(t) dt \;\textnormal{on $[0,1]$} \Longrightarrow dB_t^{w_2}=dB_t^{w_1}+ f_h(t)dt\quad \;\textnormal{on $[0,1]$}.$$
The function $g_B$ of \eqref{eq:gw} being known on $(-\infty,\tau_{k-1}]$, one can define it on $(-\infty,\tau_{k-1}+1]$ by setting $g_B(t)=f_h(t-\tau_{k-1})$ on $[\tau_{k-1},\tau_{{k-1}}+1]$. By an inversion formula (see $(4.11a)$ of \cite{hairer}), one obtains a unique $g_w$  on $(-\infty,\tau_{k-1}+1]$ (where $g_w$ is defined in \eqref{eq:gw}). Then, $g_h$ can be defined by $g_h(t)=g_w(t+\tau_{k-1})=g_w^{\tau_{k-1}}(t)$, $t\in[0,1]$. In fact, it can be shown (see proof of Lemma 5.9  of \cite{hairer} for details) that the function  $g_h$ is given by 
\bqn\label{eq:explicitgh}
g_h(t)=C{\cal R}_0 g_w^{\tau_{k-1}}(t)+\alpha_H\frac{d}{dt}\left(\int_{0}^{t} (t-s)^{{\frac{1}{2}-H}} f_h(s)ds\right),\quad t\in(0,1].
\eqn

\smallskip
\noindent Note that, thanks to the $(K,\alpha)$-admissibility condition and to the differentiability of $f_h$, the function $g_h$ is measurable and integrable on $[0,1]$. We can thus define $\varphi: {\cal C}^{\frac{1}{2}-\varepsilon_\theta}([0,1],\ER^d)\rightarrow{\cal C}^{\frac{1}{2}-\varepsilon_\theta}([0,1],\ER^d)$ by 
\bqn\label{defvarphi1}
\varphi(w_1)_t=w_1(t)+\int_0^t g_{h}(s)ds,\quad t\in[0,1].
\eqn
Using again that  $w_1\mapsto   y_a^1$ is continuous from ${\cal C}^{\frac{1}{2}-\varepsilon_\theta}([0,1],\ER^d)$ to ${\cal C}^{\theta}([0,1],\ER^d)$, one can check that the mapping  $\varphi$ is measurable on ${\cal C}^{\frac{1}{2}-\varepsilon_\theta}([0,1],\ER^d)$. Furthermore, it is bijective with measurable inverse $\psi$ defined as follows: for a given ${w}_2$, denote by $(\tilde{y}_a^2(t))_{t\ge0}$ the solution to $d\tilde{y}_a^2(t)=(\nabla h_a b_a)(h^{-1}(\tilde{y}_a^2(t)))+ dB_t^{w_2}$ starting 
from $x_2$. One thus defines $\tilde{\rho}_{h_a}$ as the solution to $\frac{d\tilde{\rho}_{h_a}}{dt}=\tilde{F}_a(t,\tilde{\rho}_{h_a}(t))$
with initial value $\tilde{\rho}_{h_a}(0)=\rho_{h_a}(0)$ and
%%\bqn
%\begin{cases} \\
%dx(t)=b(x_t) dt+\sigma(x_t) (dB_t^{{w}_2}-\tilde{f}_h(t)dt)
%\end{cases}
%\eqn

\begin{equation*}
\tilde{F}_a(t,\rho)=(\nabla h_a b_a) ( h^{-1}(\tilde{y}_a^2(t)))-(\nabla h_a b_a) (h^{-1}(\tilde{y}_a^2(t)-\rho))+ \tilde{f}_{h}(t)dt,
\end{equation*}
where
\begin{equation*}
 \tilde{f}_h(t)=-\kappa^a_1 \tilde{\rho}_{h_a}(t)-\kappa_2\frac{\tilde{\rho}_{h_a}(t)}{\sqrt{|\tilde{\rho}_{h_a}(t)|}}. 
\end{equation*}
Then, in a similar way as above  we can define $\psi(w_2)$  as the unique $w_1\in {\cal C}^{\frac{1}{2}-\varepsilon_\theta}([0,1],\ER^d)$ such that $(B_t^{w_1})_{t\in[0,1]}$ satisfies $dB_t^{w_1}=dB_t^{w_2}-\tilde{f}_h(t) dt$. By similar arguments as for $\varphi$, $\psi$ is measurable under $\mathbf{(H_0)}$. Furthermore, denoting $\tilde{y}_a^{1}(t)=\tilde{y}_a^{2}(t)-\tilde{\rho}_{h_a}(t)$, one can check that the construction ensures that if $w_2=\varphi(w_1)$ (and symmetrically if 
$w_1=\psi(w_2)$),    $\tilde{\rho}_{h_a}=\rho_{h_a}$ and $(y_a^1,y_a^{2})=(\tilde{y}_a^1,\tilde{y}_a^2)$ and it follows that $\varphi\circ\psi=\psi\circ \varphi=I_d$ on ${\cal C}^{\frac{1}{2}-\varepsilon_\theta}([0,1],\ER^d)$.

\noindent $(iv)$ Control of the function $g_h$ and Girsanov:  For {$\omega\in\Omega_{K,\alpha,\tau_{k-1}}$ and $w_1\in {\cal C}^{\theta}([0,1],\ER^d)$}, consider the explicit expression of $g_h$ given by \eqref{eq:explicitgh}.

By \eqref{hairerassumpcond}, the $L^2$-norm of $(({\cal R}_0 g_w^{\tau_{k-1}})(t))_{t\in(0,1]}$ is bounded by $1$. As concerns that of the second term in  \eqref{eq:explicitgh}, it follows from Lemma 5.1 of \cite{hairer} that it is enough to bound $f_h(0)$ and $|d f_h/dt|$.  For the sake of simplicity, we set $\rho_h=\rho_{h_a}$ with the choice of $a$ of the end of $(i)$. Since $\rho_h$ is built in such a way that $|\rho_h(t)|\le |\rho_h(0)|$,  one can check that  there exists a deterministic constant $\tilde{C}$  depending on $K$, ${M}$ and $\theta$ such that $|f_h(0)|\le \tilde{C}$ and 
$$ \forall t\in[0,1],\quad\left|\frac{d f_h(t)}{dt}\right|
\le C (\left|\rho_h(t)\right|+|\sqrt{\rho_h(t)}|)\le \tilde{C}.$$
%Following carefully the proof of Lemma 5.8 of \cite{hairer} (using in particular Assumption \eqref{hairerassump}), 
We deduce  that  for every positive ${M}$ and $K$, there exists another finite constant ${\bar C}(M,K)$ such that 
\bqn\label{contgirsanov}
\forall \omega\in \Omega_{K,\alpha,\tau_k},\;\forall w_1\in {\cal C}^{\theta}([0,1],\ER^d) , \int_0^1 |g_h(s)|^2 ds<\bar{C}(M,K).
\eqn
 This allows us to apply Girsanov Theorem on $[0,1]$. More precisely,  denoting by $\PE_W$ the Wiener measure and by $\varphi^*\PE_W$ the image measure of $\PE_W$ by the mapping $\varphi$, one deduces from  Girsanov Theorem
that  ${\varphi}^*\PE_W(dw)= D_{{\varphi}} (w) \PE_W(dw)$ where 
\bqn\label{eq:dphi}
D_{\varphi}(w)=\exp\left(\int_0^1 ({g}_{h}(s)| dw(s))-\frac{1}{2}\int_0^1 |{g}_{h}(s)|^2ds\right).
\eqn
We can now make explicit the coupling strategy.
%
%
%  $\varphi^*
%
%$\bar{C}$ such that 
%%\cap\{\|W^1\|_{\frac{1}{2}-\varepsilon_\theta}^{\tau_k,\tau_k+1}\le \tilde{K}\}$.
% Actually, by $(i)$ and the fact that $h$ is continuous, on this set
%$$\sup_{t\in[0,1]}|X_t^1(\omega)|\le C$$
%where ${M}$ is a deterministic constant depending only on $h$, $\theta$, $H$, $K$ and $\tilde{K}$. Then, by construction, 
%$$\sup_{t\in[0,1]} |\rho_h(t+\tau_k)|\le \rho_h(\tau_k)\le \sup_{x,y\in B(0,K)} |h(x)-h(y)|=\tilde{C}$$
%where $\tilde{C}$ is another finite constant , so that $\kappa_2$ is bounded by a deterministic constant and 
%$$\sup_{t\in[0,1]} |h(X_{t+\tau_k}^2)|\le C+\rho_h(\tau_k)\le C+\tilde{C}.$$
%Since $h^{-1}$ is continuous, this in turn implies that $\kappa_1$ is bounded by a deterministic constant. As a consequence, there exists $C>0$ such that
%$$\omega\in\Omega_{K,\tau_k}\cap\{\|W^1\|_{\frac{1}{2}-\varepsilon_\theta}^{\tau_k,\tau_k+1}\le \tilde{K}\}, \sup_{t\in[0,1]} |f_h(t)|\le C.$$

\smallskip
\noindent $(v)$ Construction of $(W^1,W^2)$ on $[\tau_{k-1},\tau_{k-1}+1]$. First, we  recall that we set $W^1_t=W^2_t$ on $[\tau_{k-1},\tau_{k-1}+1]$  if $\omega\in\Omega_{K,\alpha,\tau_k}^c$ (in this case, attempting a coupling would generate a too important cost for the future).   Now, if $\omega\in\Omega_{K,\alpha,\tau_{k-1}}$, the construction follows the lines of \cite{hairer} but with the specificity that the construction of $g_h$ leads to a successful coupling only on a subset of ${\cal C}^{\frac{1}{2}-\varepsilon_\theta}([0,1],\ER^d)$. 

%if the variations of $W^1$ are not too large on $[\tau_k,\tau_k+1]$.  denote by $\PE_{W}$ the distribution of a Brownian motion $(W_t)_{t\in[0,1]}$ and, 

%More precisely, for a Borel subset $A$ of ${\cal C}^{\frac{1}{2}-\varepsilon_\theta}([0,1],\ER^d)$, denote by $\PE_W^{A}$ the  restriction of $\PE_W$ to $A$. 
More precisely, for  positive measures $\mu_1$ and $\mu_2$ with densities $D_1$ and $D_2$ with respect to another measure $\mu$, denote by $\mu_1\wedge\mu_2$ the measure defined by $(\mu_1\wedge \mu_2) (dw)=D_1(w)\wedge D_2(w) \mu(dw)$.
With the help of the function $\varphi$ introduced in $(iii)$ (see \eqref{defvarphi1}), we define a non-negative measure ${\bf P}_1$ on ${\cal C}^{\frac{1}{2}-\varepsilon_\theta}([0,1],\ER^d)^2 $ by
$${\bf {P}}_1=\frac{1}{2}\varphi_{1}^* \PE_W\wedge \varphi^*_{2}\PE_W$$
where $\varphi_{1}$ and $\varphi_{2}$ are the functions  defined on ${\cal C}^{\theta}([0,1],\ER^d)$ by
$$\varphi_1(w)=(w,\varphi(w))\quad \textnormal{and}\quad \varphi_2(w)=(\varphi^{-1}(w),w).$$
%and $A_{\bar{K}}=\{w\in{\cal C}^{\frac{1}{2}-\varepsilon_\theta}([0,1],\ER^d), \|w\|_{\frac{1}{2}-\varepsilon_\theta}\le \bar{K}$ 
%and  $B_{M}=\varphi^{-1}(A_M)$ (note that $A_M$ and $B_M$ are Borel sets since $A_M$ is closed and $\varphi^{-1}$ is measurable).
%Denote by $\tilde{\varphi}:{\cal C}^{\frac{1}{2}-\varepsilon_\theta}([0,1],\ER^d)\rightarrow{\cal C}^{\frac{1}{2}-\varepsilon_\theta}([0,1],\ER^d)$, the function defined by $\tilde{\varphi}(\omega)=\varphi(\omega)$ on $A_{\bar{K}}$ and by $\tilde{\varphi}(\omega)=\omega$. Following c
%
%It follows from Girsanov Theorem that $\tilde{\varphi}^*\PE_W= D_{\tilde{\varphi}(w)} \PE_W(dw)$ with 
%\bqne
%D_{\tilde{\varphi}(w)}=\exp\left(\int_0^1 (\bar{g}_{h,w}(s)| dw_s)-\frac{1}{2}\int_0^1 |\bar{g}_{h,w}(s)|^2ds\right)
%\eqne
%with $\bar{g}_{h,w}=g_{h,w}$  $w\in A_{\bar{K}}$ and $\bar{g}_{h,w}=0$  $w\in A_{\bar{K}}^c$.
By construction,  
$$\varphi_{1}^* \PE_W(dw_1,dw_2)=\mathbf{1}_{\{(\varphi^{-1}(w),w)\}}(w_1,w_2)D_{{\varphi}}(w) \PE_W (dw),$$
where $D_{{\varphi}}$ is defined by \eqref{eq:dphi}. This implies that ${\bf P}_1$ satisfies
\bqn\label{eq:repp1}
{\bf P}_1(dw_1,dw_2)=\frac{1}{2}\mathbf{1}_{\{(\varphi^{-1}(w),w)\}}(w_1,w_2)(D_{\varphi}(w)\wedge 1) \PE_W (dw).
\eqn
Write  $S(w_1,w_2)=(w_2,w_1)$ and denote by $\tilde{\bf P}_1$ the ``symmetrized'' non-negative measure induced by ${\bf P}_1$,  $\tilde{\bf P}_1:={\bf P}_1+S^*{\bf P}_1$.  We then define the coupling   $(\tilde{W}^1_t,\tilde{W}^2_t)=(W^1_{t+\tau_{k-1}}-W^1_{\tau_{k-1}},W^2_{t+\tau_{k-1}}-W^2_{\tau_{k-1}})$ as follows:
$${\cal L}((\tilde{W}^1_t,\tilde{W}^2_t)_{t\in[0,1]})=\tilde{\bf P}_1+ \Delta^*(\PE_W-\Pi^*_1 \tilde{\bf P}_1)={\bf P}_1+{\bf P}_2$$
with $\Delta(w)=(w,w)$, $\Pi_1(w_1,w_2)=w_1$ and ${\bf P}_2=S^*{\bf P}_1+\Delta^*(\PE_W-\Pi^*_1 \tilde{\bf P}_1)$. 
Using \eqref{eq:repp1}, we check that for nonnegative functions $f$, 
\begin{align*}
\Pi^*_1\tilde{\bf P}_1(f)&\le\frac{1}{2}\int \left(f(\varphi^{-1}(w))D_{\varphi}(w)+ f(w)\right)\PE_{W}(dw)\leq \PE_{W}(f),
%&\le\frac{1}{2} \int f(w) (1_{\varphi^{-1}(A_{M})}(w)+1_{A_M}(w))\PE_W(dw)
\end{align*}
%where we used that $\hat{\varphi}^{-1}(A_M)=\varphi^{-1}(A_M)$ and that $\varphi^{-1}\circ\hat{\varphi}(w)=w$ for every $w\in \varphi^{-1} (A_M)$.
hence  ${\bf P}_2$ is the sum of two positive measures.
Thanks to the symmetry property of $\tilde{\bf P}_1$ and to the fact that $ \Pi_1 \circ\Delta$ is the identity, one can also check that the marginals of $ {\bf P}_1+ {\bf P}_2$ are both equal to $\PE_W$.

\smallskip
\noindent ({\it vi}) Lower-bound for the probability of coupling: by construction, conditionally on  $\Omega_{K,\alpha,\tau_{{k-1}}}$ and  $(X^{1}_{\tau_{k-1}},X^{2}_{\tau_{k-1}}, (W^1,W^2)_{t\le \tau_{k-1}})$, under the subprobability $\ {\bf P}_1$ the coupling is successful  on the event $A_M\times\varphi(A_M)$. In other words, if we assume that $(W^1,W^2)$ is realized with the previous coupling construction, we have 
$$\PE(X_{\tau_{k-1}+1}^1=X_{\tau_{k-1}+1}^2|\Omega_{K,\alpha,\tau_{k-1}})\ge \| 1_{A_M\times\varphi(A_M)}{\bf P}_1\|_{TV}.$$
By \eqref{eq:repp1} and Lemma C.1. of \cite{mattingly02} (applied to $p=2$, $\mu_1=\varphi^*\PE_W$, $\mu_2=\PE_W$ and $X=A_{M}$) we have
$$\|\mathbf{1}_{A_M\times\varphi(A_M)}{\bf P}_1\|_{TV}\ge \frac{\left[\int_{\varphi(A_{M})} D_{\varphi}(w)\PE_W(dw)\right]^2}{ 4 \int_{\varphi(A_{M})} D_{\varphi}(w)^{3} \PE_W(dw)}.$$
We will now show that  $M$  can be chosen in such a way that the above quantity is bounded away from $0$ independently of $k\in \mathbb{N}$. 
On the one hand, by exhibiting an exponential martingale and by using \eqref{contgirsanov}, we have
\begin{align*}
\int_{\varphi(A_M)} D_{\varphi}(w)^{3} \PE_W&(dw)\le 
\Big( \sup_{w\in \varphi(A_M)}\exp(3\int_0^1 |{g}_{h}(s)|^2ds)\Big)\times\\
&\int \exp\left(3\int_0^1 (g_{h}(s)| dw_s)-\frac{3^2}{2}\int_0^1 |g_{h}(s)|^2ds\right)\PE_W(dw)\le\exp(3{\bar C}(M,K) ) .
\end{align*}
On the other hand,
$$\int_{\varphi(A_{M})} D_{\varphi}(w)\PE_W(dw)=\PE_W(A_M).$$
%otice now that,   since $t\mapsto |\rho_h(t)|$ is non-increasing,    one has  $|h(x_t^{w_2})|\le |h(x_t^{w_1})|+|\rho_h(0)|\le M+ \bar{K}$, whenever  $w_1\in A_{M}$ and  $w_2=\varphi(w_1)$. In particular, for every $M>\bar{K}$ we have
 %$\varphi(A_{M-\bar{K}})\subset A_M$,
 % so that 
%$A_{M-\bar{K}}\subset \varphi^{-1}(A_M)$. 
For   the choice   $$M:=C(K,\tilde{K}),$$ we get from $(i)$ that
$$\{w,\|w\|_{\frac{1}{2}-\varepsilon_\theta}^{0,1}\le \tilde{K}\}\subset A_{M}.$$ As a consequence, we have
$\int_{A_{M}} D_{\varphi}(w)\PE_W(dw)\geq \PE_W(\{w,\|w\|_{\frac{1}{2}-\varepsilon_\theta}^{0,1}\le \tilde{K}\})$
and 
$$\|\mathbf{1}_{A_M\times\varphi(A_M)}{\bf P}_1\|_{TV}\ge \frac{\left[  \PE_W(\{w,\|w\|_{\varepsilon_\theta}^{0,1}\le \tilde{K}\})  \right]^2}{ 4 \exp(3{\bar C}(C(K,\tilde{K}),K) )}>0$$
which concludes the proof. 

\smallskip
\noindent $(b)$ When Step 1 is successful, this property follows from \eqref{contgirsanov}. If  Step 1 is not attempted (and thus fails) since $\omega\in\Omega_{K,\alpha,\tau_{k-1}}^c$, $W^1=W^2$ on $[\tau_{k-1},\tau_{k-1}+1]$ so that $g_w$ is null on $[\tau_{k-1},\tau_{k-1}+1]$. If Step $1$ is attempted and fails,  it follows from the above construction of the coupling that $w_1=w_2$ or $w_2=\varphi^{-1}(w_1)$ with $w_1\in A_M$. Then, since the control of the functions $\tilde{f}_h$ (defined
in $(iii)$) and its derivative are similar to that of $f_h$ in $(iv)$, we deduce that the $L^2$-norm of $\tilde{g}_h(t)=g_w(t+\tau_{k-1})$ can be also bounded in a similar way.   

\smallskip
\noindent 
$(c)$ When Step $1$ is successful, $g_B(t+\tau_{k-1})=f_h(t)$ on $[0,1]$ and the boundedness of $f_h$ follows from that of $\rho_h$ which is proved in $(ii)$.

%for some positive measures $\mu_1$ and $\mu_2$ with densities $D_1$ and $D_2$ with respect to another measure $\mu$,
%$(\mu_1\wedge \mu_2) (d\omega)=D_1(\omega)\wedge D_2(\omega) \mu(d\omega)$. 
%Also denote by $\varphi:{\cal C}^\theta([0,1],\ER^d)\rightarrow{\cal C}^\theta([0,1],\ER^d)$ the function defined $\omega
%
%We then define by $P_1^K
%
% on $\{w\in \acomp, \|w\|_{\frac{1}{2}-\varepsilon_\theta}\le \tilde{K}\}$. Denote by $Q^{\tilde{K}}$ a positive  measure on ${\cal C}([0,1],\ER^d)^2$ such that its marginals are equal to $\PE_W^{\tilde{K}}$. Due to the strong Markov property, we can assume that if $\omega\in\Omega_{K,\alpha,\tau_k}$,
%$${\cal L}(W^1_{t-\tau_k},W^2_{t-\tau_k})_{t\in[0,1]}=Q^{\tilde{K}}+(\PE_W-\PE_W^K)\circ\Delta$$
%where $\Delta:{\cal C}([0,1],\ER^d)\rightarrow{\cal C}([0,1],\ER^d)^2$ is defined by $\Delta(w)=(w,w)$. It remains now to build 
%$Q^{\tilde{K}}$. This construction follows the lines of \cite{Hairer} (p.38). Denote by $\varphi$ the application from \acomp to \acomp defined by $\varphi(w)=w+\int_0^{.} g_h(s)ds$ where $g_h$
%is the drift term of the underlying noise which ensures that $dB_t^2=dB_t^1+f_h(t)dt$ on $[\tau_k,\tau_k+1]$.
%We define $P_1^K$ by $P_1^K:=\varphi_\uparrow \PE_W^K\wedge\varphi_\downarrow \PE_W^K$ where 
%\varphi_\uparrow(\omega)=(\omega,\varphi(\omega))\quad\textnormal{and}\quad \varphi_\downarrow(\omega)=(\varphi^{-1}(\omega),\omega)
%
%We first consider 
%such that 

\end{proof}
\subsection{Step 2}
As explained before, Step 2 is a series of trials on some intervals $I_\ell$ of length $c_2 2^\ell$  (the first one  of length $2 c_2$, the second  one of length $4 c_2$,\ldots).
We denote by $s_{k,\ell}$ the  left extreme of each interval $I_\ell$. More precisely, for every $k\ge1$, we define $(s_{k,\ell})_{\ell\ge0}$ by
\bqn\label{eq:skuv}
 s_{k,0}=s_{k,1}=\tau_{k-1}+1\quad \textnormal{and for every $\ell\ge 1$} \quad s_{k,\ell+1}=s_{k,\ell}+c_2 2^{\ell}.
 \eqn
Also denote by $\ell_k^*$, the (first) trial after time  $\tau_{k-1}$ where Step 2 fails. The case $\ell_k^*=0$ and $\ell_k^*=+\infty$ correspond respectively to the failure of Step 1 and to the success of Step 2. For  given positive $\alpha$ and $K$, we set  
\bqn\label{eq:bkl}
{\cal B}_{k,\ell}:=\Omega_{K,\alpha,\tau_{k-1}}\cap\{\ell_k^*>\ell\},\quad k\ge1,\; \ell\ge0.
\eqn
With this definition, 
\bqn\label{eq:productinfty}
\PE(\tau_k=+\infty|\Omega_{K,\alpha,\tau_{k-1}})=\PE(X_{\tau_{k-1}+1}^1=X_{\tau_{k-1}+1}^2| \Omega_{K,\alpha,\tau_{k-1}}) \prod_{\ell=1}^{+\infty}\PE({\cal B}_{k,\ell}|{\cal B}_{k,\ell-1}).
\eqn	
Consequently, the aim  now is to lower-bound  $\PE({\cal B}_{k,\ell}|{\cal B}_{k,\ell-1})$. This is the purpose of Lemma \ref{lemme:step2.2}.  The proof is (once again) based on a coupling argument, which is given in the  next lemmas: 
\begin{lemme}\label{lemme:couplingmarg} Let $K$ and $b$ be positive numbers.

\smallskip
\noindent(i)  Then, there exist  $M_\bee>0$, $\rho_\bee^1$ and $\rho_\bee^2\in(0,1)$  such that for every $a\in[-\bee,\bee]$, we can build a random variable $(U_1,U_2)$ with values in $\ER^2$
such that 
\begin{align*}
 {\cal L}(U_1)={\cal L}(U_2)={\cal N}(0,1), \quad\rho_\bee^1\le \PE(U_2=U_1+a)\le \rho_\bee^2,\quad \PE(|U_2-U_1|\le M_\bee)=1
 \end{align*}
 and on the event $\{U_2=U_1+a\}$, $|U_1|\le \tilde{M}_\bee$ and $|U_2|\le \tilde{M}_\bee$ hold a.s.,  where $\tilde{M}_\bee\le \frac{M_\bee}{2}+\bee$.

\smallskip
\noindent (ii) Furthermore, if $\bee\in (0,1)$, the previous statement holds with  $M_\bee\le \max\{4\bee,-2\log(\bee/8)\}$, $\rho_1^\bee=1-\bee$ and $\rho_\bee^2=1-{\frac{\bee}{2}}$. 
%Then, for every $a\in[-\bee,\bee]$,    
% we can build a random variable $(U_1,U_2)$ with values in $\ER^2$
%such that 
%\begin{align*}
% {\cal L}(U)={\cal L}(V)={\cal N}(0,1), \quad\PE(V=U+a)\ge 1-\bee,\quad \PE(|V-U|\le M_\bee)=1
% \end{align*}
% and on the event $\{V=U+a\}$, $|U|\le \tilde{M}_\bee$ and $|V|\le \tilde{M}_\bee$ where $\tilde{M}_\bee:=\frac{M_\bee}{2}+\bee$.
\end{lemme}
\begin{Remarque}\label{rem:modiflem} In order to ensure the $(K,\alpha)$-admissibility condition at the next trials, one needs to control the increments of $W^1$ and $W^2$ during Step 2. In particular, when Step 2 fails, we will need the probability of success to be not too large. This explains the property of domination of the probability of success  $\PE(U_2=U_1+a)$ (and $\PE(W^2=W^1+g)$ in the next result) which may appear of poor interest. For the same reason, we give  in the following result  an explicit construction of $W^1$ and $W^2$ during Step 2.
\end{Remarque}
\begin{proof}
 $(ii)$ is almost the statement of Lemma 5.13 of \cite{hairer}.  The only new points are the deterministic control of $|U_1|$ and $|U_2|$ on the event $\{U_2=U_1+a\}$ and the domination of the probability of success by $\rho_\bee^2=1-\frac{\bee}{2}$. With the notations of \cite{hairer}, the first property follows from the construction of the measure ${\cal N}_3$ which is such that 
for every $a\in[-\bee,\bee]$, the support of ${\cal N}_3$ is included  in $[-M_\bee/2,M_\bee/2-a]\times[-M_\bee/2-a,M_\bee/2]$. For the second one,  it is enough to note that the probability of success introduced in Lemma 5.13 of \cite{hairer} and denoted by ${\cal N}_3(L_3)$  is a  non-decreasing continuous function of $M$ and equal to $0$ if $M_\bee=0$. Thus, the domination of this probability can be obtained by reducing sufficiently the value of $M$.

\smallskip
\noindent  On the other hand, $(i)$ is in some sense a rough version of $(ii)$. Its proof can also be done by following the lines of the lemma of \cite{hairer} and by checking that for every $\bee>0$, we can choose $M_\bee$ large enough such that 
$\inf_{a\in[-\bee,\bee]}{\cal N}_3(L_3)>0$.
\end{proof}

The following lemma is a slightly modified version of Corollary 5.14 of \cite{hairer}.
\begin{lemme}\label{cor:couplingexpansion}
Let $T$ and $\bee$ be  positive numbers and $g\in L^2([0,T],\ER)$ with $\|g\|_{2,[0,T]}\le \bee$.
\smallskip

\noindent (i) There exists $M_\bee>0$, $\rho_\bee^1$,  $\rho_\bee^2\in(0,1)$  and  a  couple of Wiener processes  $(W^1,W^2)$ defined in $[0,T]$  such that
\bqn\label{eq:propcoupl}
  \rho_\bee^1\le \PE\left(W^2_t=W^1_t+\int_0^t g(s)ds,t\in[0,T]\right)\le \rho_\bee^2\quad\textnormal{and}\quad \PE(\|W^2-W^1\|_{2,[0,T]}\le M_\bee)=1.
\eqn
 Furthermore, there exists a triple $(U_1,U_2,V)$ of  standard normally distributed random variables and a Brownian motion $\tilde{W}$ such that 
 $(U_1,U_2)$ and $(V,\tilde{W})$ are independent, 
 \bqn\label{eq:propcoupl2}
 W^i_t=\left(U^i+V\right)\frac{\int_0^t g(s)ds}{\|g\|_{{2,[0,T]}}}+\tilde{W}_t,\quad t\in[0,T] \quad \mbox{ for } i=1,2, 
 \eqn 
 and moreover  $|U_i|\le \tilde{M}_\bee:=\frac{M_\bee}{2}+\bee$ on the event $\{W^2_t=W^1_t+\int_0^t g(s)ds,t\in[0,T]\}$.
 
 \smallskip
 \noindent (ii) Furthermore, if $\bee\in(0,1)$, the previous statement holds with $M_\bee=\max\{4\bee,-2\log(\bee/8)\}$, $\rho_\bee^1=1-\bee$ and $\rho_\bee^2=1-\frac{\bee}{2}$. 
  \end{lemme}
  
\begin{proof} $(i)$  Let $(f_k)_{k\ge1}$ denote a complete orthonormal basis of $L^2([0,T],\ER)$ with $f_1= g/{\|g\|_{{2,[0,T]}}}$. In some  probability  space 
 $(\Omega',{\cal F}',\PE')$ let $(U_1,U_2)$ be a couple of random variables satisfying the properties of Lemma \ref{lemme:couplingmarg}$(i)$ and $(\xi_k)_{k\ge2}$ be a sequence independent of $(U_1,U_2)$ of i.i.d. random variables
with ${\cal L}(\xi_2)={\cal N}(0,1)$. Defining for  $i=1,2$ the process $ W^i_t: = {\cal W}^i(\mathbf{1}_{[0,t]}), t\geq 0,$
where  ${\cal W}^i: L^2([0,T],\ER)\to{\cal W}^i(L^2([0,T],\ER) )\subset  L^2(\Omega',{\cal F}',\PE')$ is the isometry of Hilbert spaces  such that ${\cal W}^i(f_1)=U^i$ and ${\cal W}^i(f_k)= \xi_k , k\geq 2$, one easily checks (by computing covariances) that
$$ W^i_t=  U_i\frac{\int_0^t g(s)ds}{\|g\|_{{2,[0,T]}}}+\sum_{k\ge2}\xi_k\int_0^t f_k(s)ds$$
and that $(W^i_t)$ is a standard Brownian Motion. It follows from   Lemma \ref{lemme:couplingmarg}$(i)$ and from the previous construction that \eqref{eq:propcoupl} holds. Furthermore, 
introducing artificially a last standard normally distributed random variable $V$ independent of $\sigma(U_1,U_2,\xi_k,k\ge2)$, we 
can write $W^i$ as follows 
$$W^i_t=(U_i+V)\frac{\int_0^t g(s)ds}{\|g\|_{{2,[0,T]}}}+\tilde{W}_t,\quad  t\in[0,T]$$
where $\tilde{W}_t:=-V\int_0^t g(s)ds/{\|g\|_{{2,[0,T]}}}+\sum_{k\ge2}\xi_k\int_0^t f_k(s)ds$ is a standard Brownian motion independent of $(U_1,U_2)$. Finally, the boundedness property of $U_i$ on $\{W^2_t=W^1_t+\int_0^t g(s)ds,t\in[0,T]\}$ again follows from that obtained in Lemma \ref{lemme:couplingmarg}$(i)$.

\smallskip
\noindent $(ii)$ The proof is identical using the properties of Lemma \ref{lemme:couplingmarg}$(ii)$ instead of those of $(i)$.
\end{proof}

Before stating the key lemma for Step 2 (below), let us introduce some notations.  Owing to the   one-to-one correspondence  between $g_w$ and $g_B$, there is  a unique  choice for function  $g_w$  in $[\tau_{k-1}+1,\infty)$ which ensures that $g_B(t)=0$ after $\tau_{k-1}+1$ (or equivalently that $B^1_t=B^2_t$ after $\tau_{k-1}+1$). We denote it by $g_{_S}$ in the next lemma (see the proof for an explicit expression of $g_{_S}$).
\begin{lemme}\label{lemme:step2.2} Let $K>0$ and assume that $\alpha\in(0,H)$. There exists a constant $C_K\ge 1$ which does not depend on $k$ such that,   
\bqne
\int_0^{+\infty} (1+t)^{2\alpha}|g_{_S}(\tau_{k-1}+1+t)|^2dt\le C_K.
\eqne
Then,  $(W_1,W_2)$  can be constructed during Step 2 in such a way that for all $k$ and $\ell$, 
\bqn\label{eq:lbbk1}
\rho_K^1\le \PE({\cal B}_{k,1}|{\cal B}_{k,0})\le\rho_K^2\;\textnormal{and $\forall \ell\ge2,$}\;(1-\rho_K^3 2^{-\alpha \ell})\le\PE({\cal B}_{k,\ell}|{\cal B}_{k,\ell-1})\le (1-\rho_K^3 2^{-\alpha \ell-1})
\eqn
where $\rho_K^1,\rho_K^2\in(0,1)$  do not depend on $k$ and $\rho_K^3=c_2^{-\alpha}\sqrt{C_K}$. In particular, if $c_2= C_K^{\frac{1}{2\alpha}}$, $\rho_K^3=1$ and in this case, if $2\le \ell_k^*<+\infty$ one has
\bqne
\int_{s_{k,\ell_k^*}}^{s_{k,\ell_k^*+1}} |g_w(t)|^2dt\le (2(\ell^*_k+3))^2\quad \textnormal{and}\quad\forall \ell \in\{2,\ldots,\ell_k^*\},\quad\int_{s_{k,\ell-1}}^{s_{k,\ell}}|g_w(t)|^2dt\le 2^{-2\alpha \ell}.
\eqne
whereas  if $\ell_k^*=1$,  one has
$\int_{s_{k,1}}^{s_{k,2}} |g_w(t)|^2dt\le C'_K$ for $C'_K$  a finite constant.
\end{lemme}
\begin{Remarque}\label{indep:K} The lower-bounds obtained in \eqref{eq:lbbk1} ensure the strict positivity of $\PE(\tau_k=+\infty|\Omega_{K,\alpha,\tau_{k-1}})$. The other properties will be needed for the sequel. Note that $c_2$ can be chosen in such a way that the involved quantities do not depend on $K$ except if $\ell=1$. 
\end{Remarque}
\begin{proof}
We  first remark that if at a positive stopping time $T_1$ one has
$X_{T_1}^1=X_{T_1}^2$, then
 $(X_t^1)$ and $(X_t^2)$ remain equal 
on $[T_1,T_2]$ (where $T_2>T_1$ is a second stopping time)  if and only if $g_B(t)=0$ on $(T_1,T_2]$. By  Lemma 4.3 in \cite{hairer}, and its proof, this condition is satisfied if and only if 
$$\forall t\in (0,T_2-T_1],\; g_w(t+T_1)= g_{_S}(t+T_1):= {\cal R}_0 g_w^{T_1}(t).$$
The interesting point is that the above function is ${\cal F}_{T_1}$-measurable (the context is thus different from Step $1$, where the function denoted by $g_h$ was defined in a   dynamic way). In particular, by conditioning on {${\cal F}_{ s_{k,\ell}}$} one can write:
$$\PE({\cal B}_{k,\ell}|{\cal B}_{k,\ell-1})=\ES( Q({\cal R}_0 g_w^{\tau_{k-1}+1}(s_{k,\ell}+.), c_2 2^\ell))$$
where for  positive $T$  and   a (deterministic) measurable function $g$ on $[0,+\infty)$ we denote
$$ Q(g, T)=\PE(\forall t\in[0,T],\;\tilde{W}^2_t=\tilde{W}^1_t+g(t)).$$
for $(\tilde{W}^1,\tilde{W}^2)$ a given  couple of Brownian motions on $[0,T]$.  By Lemma \ref{cor:couplingexpansion},  if
$\|g\|_{[0,T],2}\le b\le b_0$, we can build the couple  $(\tilde{W}_1,\tilde{W}_2)$ in such a way that  $Q(g,T)\ge (1-b)\vee \rho$ where $\rho$ depends only on $b_0$.

\smallskip
\noindent Following carefully the proof of Lemma 5.12. of \cite{hairer} (see in particular (5.36) therein), one deduces from Lemma \ref{lemma:step1} ($(b)$ and $(c)$) and Condition \eqref{hairerassumpcond} that on  ${\cal B}_{k,0}$,
$$ \int_0^{+\infty} (1+t)^{2\alpha} {\cal R}_0 g_w^{\tau_{k-1}+1}(t)dt\le C_K$$
for some positive constant $C_K$. Without loss of generality, we can assume that $C_K\ge1$. This yields the first property of the lemma and this easily implies that for every $\ell\ge 1$,
\bqne
\int_{s_{k,\ell}}^{s_{k,\ell+1}} |g_{_S}(u)|^2 du\le b_\ell^2 
\eqne
with $b_1= \sqrt{C_K}$ and $b_\ell= c_2^{-\alpha}\sqrt{C_K}2^{-\alpha \ell}$ if $\ell\ge2$. It remains to apply Lemma     \ref{cor:couplingexpansion} ($(i)$ for $\ell=1$ and $(ii)$ for $\ell\ge 2$) to obtain \eqref{eq:lbbk1}.
Finally, the bound for $\int_{s_{k,\ell_k^*}}^{s_{k,\ell_k^*+1}} |g_w(t)|^2dt$ follows from the value of $M_{b_\ell}$ given by  Lemma \ref{cor:couplingexpansion}. 

%
% and such that $a.s.$ $\tilde{W}^2_t-\tilde{W}^1_t=\int_0^t\tilde{g}(s)ds$ with $\|\tilde{g}\|_{[0,T],2}\le M_{b_0}\vee (C(1-\log(b)))$ where $M_{b_0}$ is  a positive constant.
%Then, since for a given function $g_0$ of $L^2$,
%one deduces  from a precise reading of what precedes,  that all the properties of the lemma are true if for every $K$ and $\alpha$, there exists $C>0$ such that 
%$$ \forall \omega\in {\cal B}_{k,0},\quad\int_0^{+\infty} (1+t)^{2\alpha} {\cal R}_0 g_w^{\tau_k+1}(t)dt\le C.$$
%The proof of this property follows from that of 
\end{proof}
%\begin{Remarque} \label{rem:grandom} If $g$ is random, by  choosing $V$ and $(\xi_k)_{k\ge1}$ independent of $g$ we still get a Brownian motion  $\tilde{W}$ which is  independent of $g$.  
%\end{Remarque}

\section{About the $(K,\alpha)$-admissibility condition}\label{sec:Kadmis}
In this section, we assume that Steps 1 and 2 are carried out  as described previously, and the aim is to ensure that  the system is $(K,\alpha)$-admissible with positive probability at all times $\tau_k$. This is the purpose of the next proposition:
\begin{prop}\label{prop:minokalp} Let  $(X_t^1,X_t^2)_{t\ge0}$ denote a solution to \eqref{eq:eqcoup} with initial condition $\tilde{\mu}$ satisfying 
$\tilde{\mu}(|x_1|^r+|x_2|^r)<+\infty$  for some  $r>0$. Assume $\mathbf{(H_0)}$, $\Hun$ and $\Hdeux$. Let $\alpha\in(0,1/2)$. Assume that for each $K>0$, $c_2$ defined in \eqref{eq:defc2} satisfies $c_2= C_K^{\frac{1}{2\alpha}}$ (where $C_K$ is a constant greater than $1$ defined in Lemma \ref{lemme:step2.2}) and that for every $k\ge1$ and $\ell\ge0$,  $\Delta_3(\ell,k)$  introduced in  \eqref{durationstep3} is defined by $\Delta_3(\ell,k)=c_3 a_k 2^{\beta \ell}$   with  $\beta>(1-2\alpha)^{-1}$, $a_k=\varsigma^k$ for some  (arbitrary) fixed  $\varsigma>1$, and $c_3$ an appropriate constant depending on the previous parameters (see Proposition \ref{lemme:step3} and Remark \ref{reducPsi}  for details).  Then, for every $\varepsilon>0$, there exists $K_\varepsilon>0$ such that for every $k\ge0$,
$$\PE(\Omega_{K_\varepsilon,\alpha,\tau_k}|\tau_{k}<+\infty)\ge 1-\varepsilon.$$ 
\end{prop}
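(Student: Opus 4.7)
The plan is to decompose the failure event according to its two defining conditions, writing $\PE((\Omega_{K,\alpha,\tau_k})^c \mid \tau_k<\infty) \le \PE((\Omega^1_{\alpha,\tau_k})^c \mid \tau_k<\infty) + \PE((\Omega^2_{K,\tau_k})^c \mid \tau_k<\infty)$, and to control each piece by a different mechanism. The first piece will be dealt with by an essentially deterministic pathwise estimate, exploiting the careful choice of waiting times $\Delta_3(\ell,k)$, while the second piece requires probabilistic estimates based on the Lyapunov assumption $\Hun$ and Gaussian/Girsanov arguments on the driving Brownian motions. Only the second piece will actually feel the parameter $K$; the first will be made to vanish almost surely by choosing $c_3$ large enough.

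For the control of $\Omega^1_{\alpha,\tau_k}$, I would exploit the fact that $g_w$ is supported on the union of past trial intervals $\bigcup_{j=1}^{k}[\tau_{j-1},\tau_j^3]$, with its cost on each interval explicitly bounded: Lemma \ref{lemma:step1}(b) gives a uniform bound on the Step 1 contribution, and Lemma \ref{lemme:step2.2} controls the Step 2 contribution of each trial in terms of $\ell_j^*$. To bound $\sup_{T\ge 0}\int_0^{\infty}(1+t)^{2\alpha}|({\cal R}_T g_w^{\tau_k})(t)|^2 dt$, I would apply a Hardy-type estimate on ${\cal R}_T$ in the spirit of Lemma 5.1 of \cite{hairer}, splitting $g_w$ by trial; the contribution of trial $j$ is then bounded by the product of its $L^2$-cost and a decay factor in the elapsed time $\tau_k-\tau_j$, which is at least $c_3\sum_{m=j+1}^{k}a_m 2^{\beta\ell_m^*}$. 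The hypothesis $\beta>(1-2\alpha)^{-1}$ is precisely what makes the resulting series converge, and taking $c_3$ sufficiently large (together with $a_k=\varsigma^k$) forces the total to be bounded by $1$ almost surely on $\{\tau_k<\infty\}$, yielding $\PE((\Omega^1_{\alpha,\tau_k})^c\mid\tau_k<\infty)=0$.

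The control of $\Omega^2_{K,\tau_k}$ is the main novelty of this section, and splits into bounding the positions $X^i_{\tau_k}$ and the past-increment functionals $\varphi_{\tau_k,\varepsilon_\theta}(W^i)$ for $i=1,2$. For the functionals, $W^i$ differs from a standard Wiener process only through the drift $g_w$ whose $L^2$-norm has just been uniformly controlled; Girsanov's theorem combined with standard Gaussian tail estimates for the stationary quantity $\varphi_{0,\varepsilon_\theta}(W)$ yields $\PE(\varphi_{\tau_k,\varepsilon_\theta}(W^i)>K\mid \tau_k<\infty)\to 0$ as $K\to\infty$, uniformly in $k$. For the positions, I would apply the Young change-of-variable formula to $V(X^i_t)$ and combine $\Hun$ with pathwise estimates on the fractional integral $\int \nabla V(X^i)\cdot\sigma(X^i)\, dB^i$ to derive a Gronwall-type bound of the form
$$V(X^i_{\tau_k}) \le C e^{-\kappa_0\tau_k/2} V(X^i_0) + C(1+\Phi_k^i),$$
where $\Phi_k^i$ is a functional of the H\"older semi-norms of $B^i$ on the relevant intervals, with moments of all orders under Gaussian concentration and the Girsanov density associated to $g_w$. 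Integrating against the initial law (whose $r$-th moment is finite by assumption) should give a uniform-in-$k$ bound on $\ES[V(X^i_{\tau_k})^\eta \mid \tau_k<\infty]$ for some $\eta>0$, hence by Markov's inequality on $\PE(|X^i_{\tau_k}|>K \mid \tau_k<\infty)$.

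The main obstacle is precisely this last Lyapunov estimate on $V(X^i_{\tau_k})$ under the conditional law $\PE(\cdot \mid \tau_k<\infty)$. Unlike the Markovian case, the Foster--Lyapunov drift inequality must be coupled with pathwise fractional integral estimates while absorbing the cumulative drift perturbation from $k$ previous failed coupling attempts, and the conditioning itself is non-trivial since $\PE(\tau_k<\infty)$ decays geometrically in $k$ by Proposition \ref{prop:imp1}. The rescue should come from the super-geometric growth of the Step 3 waiting times with $a_k=\varsigma^k$: over these long un-coupled intervals the Lyapunov drift $-\kappa_0 V$ dominates the fractional noise, so that the process returns to a compact set with uniformly high probability regardless of how far it had drifted during the coupling attempts.
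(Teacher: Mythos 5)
Your overall architecture matches the paper's: split $\Omega_{K,\alpha,\tau_k}$ into the cost condition \eqref{hairerassumpcond} and the position/memory condition \eqref{Kadmiscond}, handle the first by the Hairer-type estimate on ${\cal R}_T g_w$ with the waiting times $\Delta_3(\ell,k)$ (this is Proposition \ref{lemme:step3}, and your sketch of it is fine), and handle the second by a Lyapunov argument plus control of the driving noise. The gap is in the second part, exactly at the point you flag as ``the main obstacle'': you do not give a mechanism that makes the estimates uniform in $k$ under the conditional law $\PE(\,\cdot\,|{\cal E}_k)$ with ${\cal E}_k=\{\tau_k<\infty\}$. Your proposed tools fail there. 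A Girsanov argument for $\varphi_{\tau_k,\varepsilon_\theta}(W^j)$ is not uniformly controlled in $k$: the drift $g_w$ accumulates an $L^2$-cost of order $C_K$ per failed attempt, so exponential moments of the density degrade with $k$, and the conditioning on ${\cal E}_k$, whose probability decays geometrically (Proposition \ref{prop:imp1}), costs an extra factor $\delta_1^{-c(k)}$ if removed by H\"older/Cauchy--Schwarz over the whole history. Likewise, ``the Lyapunov drift dominates over the long Step-3 intervals'' is the right intuition but is not an estimate: the quantity one must bound is a conditional moment of the noise given $k$ failed attempts, and the failure events themselves distort the law of $W^j$ on the very intervals you need to control.

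What the paper does, and what is missing from your proposal, is the following machinery. First, the one-step contraction $\Hpun$ (Proposition \ref{prop:lyapounov}, $\Psi=V^{(2\theta-1)/4}$) is iterated over $[\tau_{k-1},\tau_k]$ and the recursion of Proposition \ref{prop:tau0} converts $\ES[\Psi(X^j_{\tau_{k-1}})|{\cal E}_k]$ into $\ES[\Psi(X^j_{\tau_{k-1}})|{\cal E}_{k-1}]$ at cost $\delta_1^{-1}$, which is absorbed because $\rho^{\Delta\tau_k}\le\delta_1/2$ (this is where $c_3\ge\log(\delta_1/2)/\log\rho$ enters); this neutralizes the geometric decay of $\PE({\cal E}_k)$ for the position part. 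Second, the remaining noise term is exactly $\Hpdeux$, proved in Proposition \ref{prop:contdubruit} by decomposing the fBm increments into local terms $\Gamma_i$ and memory terms $\Lambda_{m,k}$, computing conditional moments given the failure events via the \emph{explicit} representation of the Step-2 coupling (Lemma \ref{cor:couplingexpansion}; this is why the paper insists on an \emph{upper} bound for the success probability, Remark \ref{rem:modiflem} and \eqref{pakl}, so that conditioning on a failure does not inflate moments), and then beating the H\"older-inequality penalty $\delta_1^{-\rho(k-m)}$ coming from $\PE({\cal E}_k|{\cal E}_m)$ by the kernel decay $(\sum_{\ell=m+1}^{k-1}a_\ell)^{H-1+\varepsilon}$ over the geometrically growing waiting times $a_\ell=\varsigma^\ell$, with $\rho$ chosen small relative to $\log\varsigma/\log(1/\delta_1)$ (Lemma \ref{lemme:lambdamk}). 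The bound on $\varphi_{\tau_k,\varepsilon_\theta}(W^j)$ is then a by-product of the same memory decomposition (Appendix B), not of a Girsanov/Gaussian-tail argument. Without this conditional-moment analysis — or a genuine substitute for it — your ``should give a uniform-in-$k$ bound'' step does not go through.
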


The proof of this proposition is divided into two parts corresponding respectively to Conditions \eqref{hairerassumpcond} and \eqref{Kadmiscond}.
The first concerns the coupling function $g_w$ and the proof corresponding to this condition  easily follows from \cite{hairer} (see Subsection  \ref{subsec:hairerassum} for details).

\bigskip
{\begin{Remarque} \label{rem:valeurc2} In the sequel of this section, we always assume that $\alpha$ is a fixed number in $(0,1/2)$ and that  $c_2= C_K^{\frac{1}{2\alpha}}$. These facts are not recalled again in  each statement.
\end{Remarque}} 

The lower-bound for the second condition   is obtained in the next subsections. 
\subsection{$(K,\alpha)$-admissibility and Lyapunov}
We denote in what follows 
$${\cal E}_k:=\{\tau_k<\infty\}\;(=\{\tau_1<\infty,\ldots,\tau_k<\infty\}).$$
 We want to prove that for every $\varepsilon>0$, there exists $K_\varepsilon>0$ such that
$$\PE(\Omega_{K_\varepsilon,\tau_k}^2|{\cal E}_k)\ge 1-\varepsilon.$$
But since for every events $A_1$, $A_2$, $A_3$ and $A_4$, $\PE(\cap_{i=1}^4 A_i)\ge \sum_{i=1}^4\PE(A_i)-3$, it is 
enough to prove that for every $\varepsilon>0$, there exists $K_\varepsilon>0$ such that for $j=1,2$,
\bqn\label{eq:contmemoire}
\PE(\varphi_{\tau_k,\varepsilon_\theta}(W^j)\le K_\varepsilon|{\cal E}_k)\ge 1-\varepsilon, \quad j=1,2
\eqn 
and 
\begin{equation}\label{controlposition}
\PE(|X_{\tau_k}^{j}|\le K_\varepsilon|{\cal E}_k)\ge 1-\varepsilon\quad j=1,2.
\end{equation}
% |X_\tau^{1}(\omega)|\le K,\quad |X_\tau^{2}(\omega)|\le K, \quad  
% |X_\tau^{1}(\omega)|\le K,\quad |X_\tau^{2}(\omega)|\le K, \quad  \varphi_{\tau,\varepsilon_\theta}(W^1(\omega))\le K \quad\textnormal{and}\quad \varphi_{\tau,\varepsilon_\theta}(W^2(\omega))\le K,
Since the arguments to prove \eqref{eq:contmemoire} are contained in those needed for the \eqref{controlposition}, we defer the proof of the former
to the appendix (see Appendix B) and we only focus on the second statement.  The proof of this property is based on a Lyapunov-type argument: owing to the Markov inequality, it is obvious that \eqref{controlposition} will be true if one exhibits a positive function $\Psi:\ER^d\rightarrow\ER$
such that $\lim_{|x|\rightarrow+\infty}\Psi(x)=+\infty$ and for which there exists a finite positive constant $C$ such that for every $k\in\EN\cup\{0\}$ and for every $K>0$,
\begin{equation}\label{controlposition2}
\ES(\Psi(X_{\tau_k}^{j})|{\cal E}_k)\le C\quad j=1,2.
\end{equation}
Note that since the construction of Step 1 depends on $K$, the independence of $C$ with respect to $K$ is primordial.
To this end, we first introduce the following contraction assumption depending on $\theta\in(1/2,H)$. 
\begin{align*}
&\Hpun:\;\textnormal{There exists a subquadratic continuous function $\Psi:\ER^d\mapsto\ER_+^*$ satisfying } \\
& \lim_{|x|\rightarrow+\infty} \Psi(x)=+\infty \textnormal{ and } \exists\,\rho\in(0,1)\;\textnormal{and}\; C>0\;\textnormal{such that}\;a.s., \forall x\in\ER^d,\\
&  \hspace{3cm} \Psi(X_1)\le \rho \Psi(x)+C(1+\|B\|_\theta^{0,1}).  \\
\end{align*}
In the previous assumption, $(X_t)_{t\ge0}$ denotes a solution to \eqref{fractionalSDE0} and \textit{subquadratic} means that there exists $C>0$ such that for every $x\in\ER^d$, $\Psi(x)\le C(1+|x|^2)$.
In Subsection \ref{subsec:lyapproof}, we will prove that, under the Lyapunov assumption $\Hun$, $\Hpun$ is true.
As detailed in the next proposition,  $\Hpun$ leads to \eqref{controlposition2} if the following condition, which will be proved in subsection \ref{subsec:hpdeux}, is also true:
\begin{flushleft}
\begin{align*}
&\Hpdeux:\;\textnormal{For every $\rho\in(0,1)$, there exists $C_\rho>0$ such that for every $k\in\EN$ and $K>0$}  \\
&\hspace{4cm}\ES[\sum_{u=1}^{\Delta \tau_k}\rho^{\Delta \tau_k  -u}\| B\|_\theta^{\tau_{k-1}+u-1,\tau_{k-1}+u}|{\cal E}_k]\le C_\rho.
\end{align*}
\end{flushleft}
\begin{prop}\label{prop:tau0} Let $\theta\in(1/2,H)$ and assume $\Hpun$. Let  $(X_t^1,X_t^2)_{t\ge0}$ denote pair  of solutions to \eqref{eq:eqcoup} with initial condition $\tilde{\mu}$ satisfying 
$\tilde{\mu}(\Psi^2(x_1)+\Psi^2(x_2))<+\infty$. For  $x_1,x_2\in\ER^d$, set  
\bqn\label{def:tau0}
\tau_0(x_1,x_2):=\inf\{u\in\EN, \rho^u (\Psi(x_1)+\Psi(x_2))\le 1\}.
\eqn
Assume that $(\tau_k)_{k\ge1}$ is built in such a way that, $\Hpdeux$ holds, that for every $k\ge1$,
 $\PE({\cal E}_k|{\cal E}_{k-1})\ge \delta_1>0$ (where $\delta_1$ is a positive number which does not depend on $k$) and that  $\Delta \tau_k\ge \frac{\log(\delta_1/2)}{\log\rho}$. Then, there exists a positive constant $C$ such that  for every $k\in\EN\cup\{0\}$ and $K>0$,
\begin{equation*}%\label{controlposition2}
\ES(\Psi(X_{\tau_k}^{j})|{\cal E}_k)\le C,\qquad j=1,2.
\end{equation*}
\end{prop}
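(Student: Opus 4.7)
The plan is to iterate the one-step Lyapunov bound $\Hpun$ on successive unit intervals covering $[\tau_{k-1},\tau_k]$, and then take conditional expectation with respect to $\mathcal{E}_k$, exploiting the assumption $\Delta\tau_k \ge \log(\delta_1/2)/\log\rho$ to compensate for the loss incurred when re-conditioning from $\mathcal{E}_k$ to $\mathcal{E}_{k-1}$. Since by assumption the nontrivial trial lengths are deterministic conditionally on the failure pattern (see the remark following \eqref{durationstep3}), we may use the Markov property of the Feller transformation $\mathcal{Q}_t$ on $\ER^d\times\mathcal{W}_{\theta,\delta}$ to apply $\Hpun$ with initial state $X^j_{\tau_{k-1}+u}$ on each unit interval $[\tau_{k-1}+u,\tau_{k-1}+u+1]$ for $u=0,\ldots,\lfloor\Delta\tau_k\rfloor-1$, absorbing any leftover fractional interval into the constant $C$.

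Iterating this one-step bound yields, for $j=1,2$,
\begin{equation*}
\Psi(X^j_{\tau_k}) \le \rho^{\Delta\tau_k}\Psi(X^j_{\tau_{k-1}}) + C\sum_{u=1}^{\Delta\tau_k} \rho^{\Delta\tau_k-u}\bigl(1+\|B^j\|_{\theta}^{\tau_{k-1}+u-1,\,\tau_{k-1}+u}\bigr).
\end{equation*}
Taking conditional expectation with respect to $\mathcal{E}_k$, the deterministic part of the drift sum is bounded by $C/(1-\rho)$, while $\Hpdeux$ handles the stochastic part and yields a uniform bound $C_\rho$. Hence
\begin{equation*}
\ES\bigl(\Psi(X^j_{\tau_k})\mid\mathcal{E}_k\bigr) \le \rho^{\Delta\tau_k}\ES\bigl(\Psi(X^j_{\tau_{k-1}})\mid\mathcal{E}_k\bigr) + C',
\end{equation*}
with $C':=C\bigl((1-\rho)^{-1}+C_\rho\bigr)$ independent of $k$ and $K$.

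The crucial trick is to change the conditioning. Since $\mathcal{E}_k\subset\mathcal{E}_{k-1}$ and $\PE(\mathcal{E}_k\mid\mathcal{E}_{k-1})\ge\delta_1$,
\begin{equation*}
\ES\bigl(\Psi(X^j_{\tau_{k-1}})\mid\mathcal{E}_k\bigr) = \frac{\ES\bigl(\Psi(X^j_{\tau_{k-1}})\mathbf{1}_{\mathcal{E}_k}\bigr)}{\PE(\mathcal{E}_k)} \le \frac{\PE(\mathcal{E}_{k-1})}{\PE(\mathcal{E}_k)}\ES\bigl(\Psi(X^j_{\tau_{k-1}})\mid\mathcal{E}_{k-1}\bigr) \le \delta_1^{-1}\ES\bigl(\Psi(X^j_{\tau_{k-1}})\mid\mathcal{E}_{k-1}\bigr).
\end{equation*}
Combining with $\rho^{\Delta\tau_k}\le\delta_1/2$ (which follows from $\Delta\tau_k\ge\log(\delta_1/2)/\log\rho$), we obtain
\begin{equation*}
\ES\bigl(\Psi(X^j_{\tau_k})\mid\mathcal{E}_k\bigr) \le \tfrac{1}{2}\,\ES\bigl(\Psi(X^j_{\tau_{k-1}})\mid\mathcal{E}_{k-1}\bigr) + C'.
\end{equation*}

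Setting $a_k:=\ES(\Psi(X^j_{\tau_k})\mid\mathcal{E}_k)$, the recursion $a_k\le \tfrac12 a_{k-1}+C'$ yields $a_k\le 2C'+2^{-k}a_0$ for all $k$, where $a_0\le\tilde\mu(\Psi(x_j))<+\infty$ thanks to the subquadraticity of $\Psi$ and the moment assumption on $\tilde\mu$; this gives the stated uniform bound $C=2C'+a_0$. The main technical obstacle lies in legitimizing the iteration of $\Hpun$ on random starting points along the trial: this requires invoking the Feller Markov structure of $\mathcal{Q}_t$ on $\ER^d\times\mathcal{W}_{\theta,\delta}$ so that the $\PE$-null set in the $\Hpun$ estimate can be chosen uniformly over starting points, a point already handled in the additive setting of \cite{hairer} and which transfers here thanks to $\Hdeux$.
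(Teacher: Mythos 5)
Your main recursion is exactly the paper's argument: iterate $\Hpun$ over unit intervals to obtain $\Psi(X^j_{\tau_k})\le \rho^{\Delta\tau_k}\Psi(X^j_{\tau_{k-1}})+C\sum_{u=1}^{\Delta\tau_k}\rho^{\Delta\tau_k-u}(1+\|B\|_\theta^{\tau_{k-1}+u-1,\tau_{k-1}+u})$, use $\rho^{\Delta\tau_k}\le\delta_1/2$, pass from conditioning on ${\cal E}_k$ to ${\cal E}_{k-1}$ at the price of a factor $\delta_1^{-1}$, and absorb the noise sum via $\Hpdeux$; this part matches the paper. (The appeal to the Feller/Markov structure of ${\cal Q}_t$ is unnecessary: $\Hpun$ is an almost sure pathwise estimate, valid for every starting point, so it can be applied on each unit interval directly, as the paper does.)

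The genuine gap is the base case $k=0$. You claim $a_0=\ES[\Psi(X^j_{\tau_0})]\le\tilde\mu(\Psi(x_j))$, but $\tau_0$ is not $0$: it is the random (though ${\cal F}_0$-measurable) time defined in \eqref{def:tau0}, and nothing dominates $\Psi(X^j_{\tau_0})$ by $\Psi(X^j_0)$ --- the process may grow before $\tau_0$, and the noise on $[0,\tau_0]$ is not independent of ${\cal F}_0$ because of the fBm memory, so one cannot simply condition on $\tau_0$ and average out the noise either. This is precisely why the proposition assumes $\tilde\mu(\Psi^2(x_1)+\Psi^2(x_2))<+\infty$, an assumption your proof never uses. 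The paper handles this term by Cauchy-Schwarz over the values of $\tau_0$, namely $\ES_{\tilde\mu}[\Psi(X^j_{\tau_0})]\le\sum_{u\ge0}\ES_{\tilde\mu}[\Psi^2(X^j_u)]^{1/2}\,(\tilde\mu(\tau_0=u))^{1/2}$, and then establishes (i) a squared version of $\Hpun$, $\Psi^2(X_1)\le\tilde\rho\,\Psi^2(x)+C_{\tilde\rho}(1+\|B\|_\theta^{0,1})^2$, which combined with the stationarity of the fBm increments gives $\sup_u\ES_{\tilde\mu}[\Psi^2(X^j_u)]<+\infty$, and (ii) the geometric tail $\tilde\mu(\tau_0\ge u)\le C\rho^u$ coming from Markov's inequality and the definition of $\tau_0$, which makes the series converge. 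Without an argument of this kind your bound on $a_0$ is unjustified, and the uniform constant $C$ in the statement does not follow.
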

\begin{Remarque}\label{reducPsi} $\rhd$ Under the assumptions of Proposition \ref{prop:minokalp}, $\Delta \tau_k\ge c_3$ (see \eqref{durationstep3}). To ensure that $\Delta \tau_k\ge \frac{\log(\delta_1/2)}{\log\rho}$, one can thus choose $c_3$ large enough in order that $c_3\ge   \frac{\log(\delta_1/2)}{\log\rho}$.\smallskip

\noindent $\rhd$ By the elementary inequalities $|u+v|^p\le |u|^p+|v|^p$ and $|u|^p\le C(1+|u|)$ for $p\in(0,1)$, one remarks that if  $\Hpun$ holds for $\Psi$, it also holds for $\Psi^p$ if $p<1$. Since $\Psi$ is subquadratic, it follows that one can assume without loss of generality that 
$\Psi^2(x)\le C(1+|x|^r)$ for some given $r>0$. This explains the assumption $\tilde{\mu}(|x_1|^r+|x_2|^r)<+\infty$ in Proposition \ref{prop:minokalp}.
\end{Remarque}
\begin{proof} By $\Hpun$ and an induction  
$$\Psi(X_{\tau_k}^{j})\le \rho^{\Delta \tau_k} \Psi(X_{\tau_{k-1}}^{j})+C\sum_{\ell=1}^{\Delta \tau_k}\rho^{\Delta \tau_k  -\ell}(1+\| B\|_\theta^{\tau_{k-1}+\ell-1,\tau_{k-1}+\ell}).$$
First, since $\Delta \tau_k\ge \frac{\log ({\delta_1}/2)}{\log \rho}$, we deduce that $\rho^{\Delta \tau_k}\le \frac{{\delta_1}}{2}$. Thus,
$$\ES[\Psi(X_{\tau_k}^{j})|{\cal E}_k]\le \frac{{\delta_1}}{2} \ES[\Psi(X_{\tau_{k-1}}^{j})|{\cal E}_k]+C\sum_{u=0}^{+\infty}\rho^{u}+C\ES[\sum_{\ell=1}^{\Delta \tau_k}\rho^{\Delta \tau_k  -\ell}\| B\|_\theta^{\tau_{k-1}+\ell-1,\tau_{k-1}+\ell}|{\cal E}_k].$$
Since ${\cal E}_k\subset {\cal E}_{k-1}$ and $\PE({\cal E}_k|{\cal E}_{k-1})\ge{\delta_1}$,  $\ES[\Psi(X_{\tau_{k-1}}^{j})|{\cal E}_k]\le \delta_1^{-1}\ES[\Psi(X_{\tau_{k-1}}^{j})|{{\cal E}_{k-1}}]$. It follows that

$$\ES[\Psi(X_{\tau_k}^{j})|{\cal E}_k]\le \frac{1}{2} \ES[\Psi(X_{\tau_{k-1}}^{j})|{\cal E}_{k-1}]+\frac{C}{1-\rho}+C\ES[\sum_{\ell=1}^{\Delta \tau_k}\rho^{\Delta \tau_k  -\ell}\| B\|_\theta^{\tau_{k-1}+\ell-1,\tau_{k-1}+\ell}|{\cal E}_k].$$
Assumption  $\Hpdeux$ combined with  an induction then yields
$$\sup_{k\ge0}\ES[\Psi(X_{\tau_k}^{j})|{\cal E}_k]\le \ES[\Psi(X_{\tau_0}^{j})|{\cal E}_0]+\tilde{C}_\rho$$
where $\tilde{C}_\rho$ neither depends on $k$ and  $j$ nor on the starting condition $\tilde{\mu}$. Noticing that ${\cal E}_0=\Omega$,  it remains to bound
$\ES[\Psi(X_{\tau_0}^{j})]$.  By the definition of $\tau_0$ (which is ${\cal F}_0$-measurable) and the Cauchy-Schwarz inequality,
\bqn\label{dizepoizeo}
\ES_{\tilde{\mu}}[\Psi(X_{\tau_0}^{j})]\le \sum_{u=0}^{+\infty}\ES_{\tilde{\mu}}[\Psi^2(X_u^{j})]^{\frac{1}{2}}(\tilde{\mu}(\tau_0=u))^{\frac{1}{2}}.
\eqn
On the one hand, checking that for $\varepsilon>0$, there exists $C_\varepsilon>0$ such that for all $u$, $v$ of $\ER^d$,  $|u+v|^2\le (1+\varepsilon)|u|^2+C_\varepsilon|v|^2$, one deduces from $\Hpun$ that there exists $0<\tilde{\rho}<1$ and $C_{\tilde{\rho}}$ such that for every starting point $x$, 
$$\Psi^2(X_1)\le \tilde{\rho} \Psi^2(x)+C_{\tilde{\rho}}(1+\|B\|_\theta^{0,1})^2.$$ 
Thus, it again follows from an induction and from the stationarity of the increments of the fBm that 
$$\ES_{\tilde{\mu}}[\Psi^2(X_u^{j})]\le \int \Psi^2(x_j)\bar{\mu}_j(dx_j)+\frac{C_{\tilde{\rho}}}{1-\tilde{\rho}}\ES[(1+\|B\|_\theta^{0,1})^2]<+\infty,$$
since $\int \Psi^2(x_j)\bar\mu_j(dx_j)<+\infty$. It remains to control the queue of $\tau_0$. We have
\bqn\label{eq:borntau0}
\tilde{\mu}(\tau_0\ge u)\le \sum_{j=1}^2\bar\mu_j(\rho^u\Psi(x_j)>\frac{1}{2})\le 2\sum_{j=1}^2 \rho^{ u}\int\Psi (x_j)\bar\mu_j(dx_j)\le C\rho^u.
\eqn
%where for the second inequality, we used the Markov inequality. Choosing $p$ sufficiently small such that $\Psi^p(x)\le C(1+|x|^r)$ (see Remark \ref{reducPsi}), one obtains that there exists $\gamma_0>0$ and $C>0$ such that 
%\bqn{eq:queuetau0}
%\tilde{\mu}(\tau_0\ge u)\le C \exp(-\gamma_0 u).
%\eqn
Plugging the previous inequality in  yields the boundedness of $\ES_{\tilde{\mu}}[\Psi(X_{\tau_0}^{j})]$.
%&\le \int \rho^{\tau_0(x_1,x_2)} \Psi(x_j)\tilde{\mu}(dx_1,dx_2) \\&+\sum_{u=1}^{\tau_0}\int\rho^{\tau_0(x_1,x_2)-u}\tilde{\mu}(dx_1,dx_2)\ES[\|B\|_\theta^{u-1,u}]\le 1+\ES[\|B\|_\theta^{0,1}]
%\end{align*}
%where in the last inequality, we used the stationarity of the increments of the fractional Brownian motion. 
%\int 1_{\{\rho^{\lfloor \frac{t}{2}\rfloor} (\Psi(x_1)+\Psi(x_2))>1\}}\mu_0(w,dx_1)\mu(w,dx_2)\PE_H(dw).$$
%where, by Proposition \ref{prop:lyapounov}, $\Psi=V^{\frac{2\theta-1}{4}}$ with $\theta\in(1/2,H)$. In particular, since $ V(x)\le C(1+|x|^2)$, one obtains for every $r>0$,
%\begin{align*}
%\PE(\tau_0>\frac{t}{2})&\le \int 1_{\{\rho^{r{\lfloor \frac{t}{2}\rfloor}} C(1+|x_1|^r+|x_2|^r)>1\}}\mu_0(w,dx_1)\mu(w,dx_2)\PE_H(dw)&
%\le C \rho^{r\lfloor\frac{t}{2}\rfloor} (1+\int |x_1|^r \bar{\mu}_0(dx_1)+\int |x_2|^2\bar{\mu}(dx_2))
%\end{align*}
%{Redite}
%where $\bar{\mu_0}$ and $\bar{\mu}$ denote the first marginales of $\mu_0$ and $\mu$. Under the assumptions of Theorem \ref{theo:principal}, there exists $r>0$ such that 
%$\int |x_1|^r\\bar{\mu}_0(dx_1)<+\infty$ and it can be checked that under the Lyapunov assumption, $\bar{\mu}$ has moments of any order. Thus, there exists positive $C$ and $\gamma_0$ such that
%$$\PE(\tau_0>\frac{t}{2})\le C \exp(-\gamma_0 t).$$
\end{proof}
As a consequence,  it remains now to prove $\Hpun$ and $\Hpdeux$. This is the purpose of the next subsections.
\subsection{Proof of $\Hpun$}\label{subsec:lyapproof}
\begin{prop}\label{prop:lyapounov} Assume $\Hun$. Then, $\Hpun$ holds for every $\theta\in(\frac{1}{2},H)$ with $\Psi=V^{\frac{2\theta-1}{4}}$.
\end{prop}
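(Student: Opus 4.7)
The plan is to define $p := (2\theta-1)/4 \in (0, 1/4)$ and $\Psi := V^p$, and derive the desired contraction inequality through a Young-integration argument combined with the Lyapunov structure from $\Hun$.

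First, one verifies that $\Psi$ satisfies the regularity requirements of $\Hpun$: since $V\in{\cal E}\!{\cal Q}(\ER^d)$ one has $\min V >0$ so $\Psi$ is continuous and positive; $\Psi$ is subquadratic since $V$ is subquadratic and $p<1$; and $\Psi(x)\to\infty$ at infinity since $\liminf V(x)/|x|^2>0$. Moreover, the bound $|\nabla V|\le C\sqrt{V}$ gives $|\nabla\Psi|=pV^{p-1}|\nabla V|\le CpV^{p-1/2}$, which is bounded above because $p-1/2<0$ and $V$ is bounded below. A parallel computation, exploiting that $\nabla V$ is Lipschitz so $\nabla^2 V$ is essentially bounded, shows $\nabla^2\Psi$ is bounded; hence $\nabla\Psi\cdot\sigma$ is a bounded Lipschitz function on $\ER^d$.

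Next, apply Young's change-of-variable formula along $X$ on $[0,1]$ (legitimate since both $X$ and $B$ are $\theta$-H\"older with $\theta>1/2$):
\[
\Psi(X_1)-\Psi(X_0)=\int_0^1 \nabla\Psi(X_s)\cdot b(X_s)\,ds+\int_0^1 \nabla\Psi(X_s)^T\sigma(X_s)\,dB_s.
\]
By $\Hun$, $\nabla\Psi\cdot b=pV^{p-1}(\nabla V|b)\le pV^{p-1}(\beta_0-\kappa_0 V)\le C-p\kappa_0\Psi$. An exponential Gronwall step (multiplying the differential inequality by $e^{p\kappa_0 t}$ and integrating) then yields
\[
\Psi(X_1)\le \rho\,\Psi(X_0)+C+I,\qquad \rho:=e^{-p\kappa_0}\in(0,1),
\]
where $I:=\int_0^1 e^{-p\kappa_0(1-s)}(\nabla\Psi\sigma)(X_s)\,dB_s$ is a Young integral whose integrand is bounded and Lipschitz in $X_s$.

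The core remaining task is to show $|I|\le C(1+\|B\|_\theta^{0,1})$. The classical Young estimate yields $|I|\le\bigl(\|\nabla\Psi\sigma\|_\infty+C_{\theta,\theta'}\|\nabla\Psi\sigma(X)\|_{\theta'}^{0,1}\bigr)\|B\|_\theta^{0,1}$ for any auxiliary $\theta'\in(1-\theta,\theta)$; the sup norm is already a fixed constant, while the $\theta'$-H\"older norm of $\nabla\Psi\sigma(X_\cdot)$ reduces by Lipschitz continuity to $\|X\|_{\theta'}^{0,1}$. This last norm is bounded by a standard localization/bootstrap on the Young SDE $X_t-X_s=\int_s^t b(X)\,du+\int_s^t\sigma(X)\,dB$: partition $[0,1]$ into subintervals of length $\delta\sim\|B\|_\theta^{-1/\theta}$ to absorb the self-referential term coming from $\int\sigma(X)dB$, and invoke an a priori control of $\sup_{[0,1]}V(X_s)$ obtained by applying the parallel Young decomposition to $V(X)$ itself together with Young's inequality on the cross-term $\sqrt{V}\,\|B\|_\theta$.

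The main obstacle is precisely this final balancing: the a priori bounds on $\|X\|_{\theta'}$ and on $\sup_{[0,1]}V(X)$ are only polynomial, not linear, in $\|B\|_\theta^{0,1}$. The specific choice $p=(2\theta-1)/4$ appears to be tuned so that the suppressing factor $V^{p-1/2}$ in $\nabla\Psi$ exactly offsets these polynomial corrections, delivering the linear bound on $|I|$ required by $\Hpun$.
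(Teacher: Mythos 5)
There is a genuine gap, and it sits exactly at the step you flag as the ``core remaining task'': the bound $|I|\le C(1+\|B\|_\theta^{0,1})$ is never proved, and the direct route you take cannot deliver it. The Young estimate you invoke gives $|I|\lesssim \bigl(1+\|(\nabla\Psi\,\sigma)(X_\cdot)\|_{\theta'}^{0,1}\bigr)\|B\|_\theta^{0,1}\lesssim (1+\|X\|_{\theta'}^{0,1})\|B\|_\theta^{0,1}$, and $\|X\|_{\theta'}^{0,1}$ over the whole unit interval is itself polynomially large in $\|B\|_\theta^{0,1}$ (the good H\"older bound for $X$ only holds on intervals of length of order $(1+\|B\|_\theta^{0,1})^{-1/\theta}$, and concatenation produces extra powers of $\|B\|_\theta^{0,1}$, while iterating the sup-bound $\sup_{[s,t]}|X|\le C(1+|X_s|)$ over these $\sim(1+\|B\|_\theta^{0,1})^{1/\theta}$ subintervals even threatens exponential growth). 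Your closing suggestion --- that the factor $V^{p-1/2}$ in $\nabla\Psi$ is ``tuned'' to offset these corrections --- confuses spatial decay in $x$ with growth in the noise norm: in the Young bound the integrand enters through its sup and H\"older norms, i.e.\ through the global Lipschitz constant of $\nabla\Psi\,\sigma$ times $\|X\|_{\theta'}$, so the smallness of $V^{p-1/2}(X_s)$ at large $|X_s|$ does not cancel any power of $\|B\|_\theta^{0,1}$. Nor can you repair this by running the small-interval absorption directly on $\Psi$: there the cross term is of order $|X_s|(t-s)^\theta\|B\|_\theta^{0,1}\sim \Psi(X_s)^{1/(2p)}(t-s)^\theta\|B\|_\theta^{0,1}$ with $1/(2p)=2/(2\theta-1)>1$, which cannot be absorbed into the linear contraction $-c\,\Psi(X_s)$ coming from the drift.

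This is precisely why the paper proceeds differently. It never applies the change-of-variable formula to $\Psi$; it works with $V$ itself, on random subintervals of length $\tilde\eta\sim(1+\|B\|_\theta^{0,1})^{-\frac{4}{2\theta-1}}$, where the cross term is $\sqrt{V}\,\|B\|_\theta^{0,1}(t-s)^\theta$ and the exponent $1/2<1$ allows absorption by Young's inequality into $(t-s)^{\theta+\frac12}V(X_s)$ plus a constant. Iterating the resulting one-step contraction $V(X_t)\le(1-\frac{\kappa_0}{2}(t-s))V(X_s)+\hat\beta$ over the $\lfloor 1/\tilde\eta\rfloor$ subintervals gives $V(X_1)\le\bar\rho V(x)+C(1+\|B\|_\theta^{0,1})^{\frac{4}{2\theta-1}}$, i.e.\ a contraction for $V$ with a \emph{polynomial} noise error. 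Only at the very end is $\Psi=V^{\frac{2\theta-1}{4}}$ introduced: by subadditivity of $u\mapsto u^{\bar p}$ for $\bar p\in(0,1]$, raising the inequality to the power $\frac{2\theta-1}{4}$ converts the exponent $\frac{4}{2\theta-1}$ into $1$, which is where the linear bound in $\Hpun$ --- and the specific value of the exponent in $\Psi$ --- actually comes from. Your drift computation and the regularity checks on $\Psi$ are fine, but without this reordering (contraction for $V$ first, concavity last) the proof does not close.
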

\begin{proof}
The proof is divided in four steps. In all of them, we assume that  $0\le s<t\le 1$. 

\smallskip
\noindent 
\textbf{Step 1}. We prove the following statement: there exists $C>0$ such that 
\bqn\label{eq:grown}
|X_t|\le C \left(|X_s|+C(t-s)+|\int_{s}^t \sigma(X_u) dB_u|\right)\quad a.s.
\eqn
Actually, using that $b$ is a sublinear function,
$$|X_t|\le |X_s|+C(t-s)+|\int_{s}^t \sigma(X_u) dB_u|+\int_s^t|X_u| du.$$
The result then follows from Gronwall lemma (note that the time-dependence of the Gronwall constant does not appear since $s,t\in[0,1]$).

\smallskip
\noindent\textbf{Step 2}. \textit{Control of the Hölder norm of $X$ in a small  (random) interval}: Let $\theta\in (1/2,H)$. We show that there exist some positive constants $c_0$ and $C$ such that for every $0\le s<t\le 1$, 
satisfying $c_0(1+\|B\|_{\theta}^{0,1})(t-s)^{\theta}\le \frac{1}{2}$,
\bqn\label{eq:normh}
\|X\|_{\theta}^{s,t}\le {C}\left(\|B\|_{\theta}^{0,1}+(1+|X_s|)(t-s)^{1-\theta}\right).
\eqn

Let us prove this property.  Owing to  the classical controls of Young integrals (see $e.g.$ \cite{young}, Inequality (10.9)), for every $(p,q)\in(0,1]^2$ with $p+q>1$, there exists $C_{p,q}>0$  such that for every $p$-Hölder and $q$-Hölder functions $f$ and $g$ (respectively), 
for every $0\le s< t \le 1$,  
\bqn\label{eq:youngi}
|\int_s^t f(u) dg(u)-f(s)(g(t)-g(s))|\le C_{p,q}\|f\|_p^{0,1}\|g\|_q^{0,1} (t-s)^{p+q}.
\eqn
Applying the previous inequality with $p=q=\theta$ and using that $\sigma$ is Lipschitz continuous and bounded, we deduce  that for every $0\le s\le u<v\le t \le 1$,  
\begin{align}
|\int_u^v \sigma(X_w) dB_w|&\le C\|B\|_{\theta}^{u,v}(v-u)^{\theta}\left(\|X\|_{\theta}^{u,v}(v-u)^{\theta}+\|\sigma\|_\infty\right)\nonumber\\
&\le C\|B\|_{\theta}^{0,1}(v-u)^{\theta}\left(\|X\|_{\theta}^{s,t}(t-s)^{\theta}+\|\sigma\|_\infty\right)\label{eq:d23}
\end{align}
By \eqref{eq:grown} and what precedes, we also have
$$\int_{u}^v |b(X_r)|dr\le C(v-u)\left(1+|X_s|+\|B\|_{\theta}^{0,1}(t-s)^{\theta}(1+\|X\|_{\theta}^{s,t}(t-s)^{\theta})\right).$$
Using the previous inequalities, we deduce that
$$\|X\|_{\theta}^{s,t}\le C\left(\|B\|_{\theta}^{0,1}+(1+|X_s|)(t-s)^{1-\theta}\right)+ C\|X\|_{\theta}^{s,t}\|B\|_\theta^{0,1}(t-s)^{\theta}$$
and \eqref{eq:normh} follows.

\smallskip
\noindent

\textbf{Step 3}. \textit{Control of $\sup |X_u|$ in a small (random) interval}: let $\theta\in(1/2,H)$.
 There exists $C>0$ such that for every $0\le s\le t\le 1$ satisfying 
 %$t-s\le \eta$ where  $\eta$ be a is a positive random number
%satisfying 
 $c_0(1+\|B\|_{\theta})(t-s)^{\theta}\le \frac{1}{2}$ 
 %(so that $\eta=(2c_0(1+\|B\|_{\theta}^{0,1})^{-\frac{1}{\theta}}$). Then, 
\bqn\label{eq:controlsup}
\sup_{s\le u\le t} |X_u|\le C(1+|X_s|).
\eqn
Actually, using that $\|B\|_{\theta}^{0,1}(t-s)^{\theta}\le (2 c_0)^{-1}$, we deduce from \eqref{eq:d23} that  for every $0\le s\le t\le 1$ satisfying 
  $c_0(1+\|B\|_{\theta}^{0,1})(t-s)^{\theta}\le \frac{1}{2}$ 
\begin{equation*}
|\int_s^t \sigma(X_v) dB_v|\le C\left(\|X\|_{\theta}^{s,t}(t-s)^{\theta}+1\right).
\end{equation*}
Using again that $\|B\|_{\theta}^{0,1}(t-s)^{\theta}\le (2 c_0)^{-1}$, it follows from \eqref{eq:normh} that
\bqne
|\int_s^t \sigma(X_v) B_v|\le C\left(1+(1+|X_s|)(t-s)\right).
\eqne
Then, it is enough to plug this control in \eqref{eq:grown} to obtain \eqref{eq:controlsup}.

\smallskip
\noindent
\textbf{Step 4}. \textit{Use of the Lyapunov assumption}. Let $V$ be such that Assumption $\Hun$ holds. Let $\theta\in (1/2,H)$. Then, there exists $\bar\rho\in(0,1)$ and $C>0$ such that for every $x\in\ER^d$,
\bqn\label{eq:lyap1}
 V(X_1)\le \bar\rho V(x)+C(1+\|B\|_\theta)^{\frac{4}{2\theta-{1}}}.
 \eqn
Let us prove this statement. By $e.g.$ \cite{zahle} (see Theorem 4.3.1) and  Assumption $\Hun$, 
\bqn\label{eq:ito}
\begin{split}
e^{\kappa_0 (t-s)} V(X_t)&= V(X_s)+\int_s^t e^{\kappa_0 (u-s)}\left((\nabla V|b)(X_u)
+ \kappa_0 V(X_u)\right)du+\int_s^te^{\kappa_0 (u-s)}(\nabla V (X_u)|\sigma(X_u) dB_u)\\
&\le V(X_s)+\beta_0(t-s) +\int_s^te^{\kappa_0 (u-s)}(\nabla V (X_u)|\sigma(X_u) dB_u).
\end{split}
\eqn
Using that the functions $\nabla V$ and $\sigma$ are Lipschitz continuous, that $\sigma$ is bounded and that $u\mapsto e^{\kappa_0 u}$ is bounded and Lipschitz continuous on $[0,1]$, we obtain that for every $0\le u<v\le 1$,
$$ |e^{\kappa_0 (v-s)} \nabla V\sigma(X_v)-e^{\kappa_0 (u-s)} \nabla V\sigma(X_u)|\le C\left((1 +|\nabla V(X_v)|)(|X_v-X_u|)+|\nabla V(X_v)|(v-u)\right).$$
By \eqref{eq:youngi}, it follows that
\begin{align*}
|\int_s^t e^{\kappa_0 (u-s)}(\nabla V (X_u)|\sigma(X_u) dB_u)|&\le C \Big ((1 +\sup_{v\in[s,t]}|\nabla V(X_v)|)(\|X\|_\theta^{s,t}+(t-s)^{1-{\theta}})(t-s)^{\theta}\\
&+|\nabla V(X_s)| \Big)\|B\|_\theta^{s,t}(t-s)^{\theta}.
\end{align*}
From now on, assume that $(1+\|B\|_{\theta}^{0,1} (t-s)^\theta)\le (2 c_0)^{-1}$. By \eqref{eq:controlsup} and the fact that $1+|\nabla V(x)|\le C_1(1+|x|)\le C_2\sqrt{V}(x)$, we have
$$1 +\sup_{v\in[s,t]}|\nabla V(X_v)|\le C \sqrt{V(X_s)}.$$
Owing  to \eqref{eq:normh} and to some reductions implied by the previous inequality, we obtain
$$|\int_s^t e^{\kappa_0 (u-s)}(\nabla V (X_u)|\sigma(X_u) dB_u)|\le C \left(V(X_s)\|B\|_\theta^{0,1}(t-s)^{1+\theta}+
\sqrt{V}(X_s)\|B\|_{\theta}^{0,1}(t-s)^{\theta}\right).$$
Set $\tilde{\theta}:=\frac{1}{2}(\theta-\frac{1}{2})$ (so that $2(\theta-\tilde{\theta})=\frac{1}{2}+\theta$). By the inequality $|x y|\le \frac{1}{2}(|x|^2+|y|^2),$
$$\sqrt{V}(X_s)\|B\|_{\theta}^{0,1}(t-s)^{\theta}\le \frac{1}{2}\left( (t-s)^{2(\theta-\tilde{\theta})} V(X_s)+(\|B\|_{\theta}^{0,1})^2(t-s)^{2\tilde{\theta}}\right)$$
and on the other hand,
$$V(X_s) \|B\|_{\theta}^{0,1} (t-s)^{1+\theta}\le (t-s)^{2(\theta-\tilde{\theta})} V(X_s) \|B\|_{\theta}^{0,1} (t-s)^{\frac{1}{2}}.$$
Now, we set  
\bqn\label{eq:eta}
\eta= (2 c_0(1+\|B\|_\theta^{0,1}))^{-\frac{1}{{\theta}}}\wedge (1+\|B\|_{\theta}^{0,1})^{-\frac{1}{\tilde{\theta}}} 
\eqn
in order that for every $0\le s<t\le 1$ such that $t-s\le \eta$,
$$c_0(1+\|B\|_{\theta}^{0,1})\eta^{{\theta}}\le \frac{1}{2}\quad\textnormal{ and} \quad (\|B\|_{\theta}^{0,1}))^2(t-s)^{2\tilde{\theta}}\le 1.$$
For such $s,t$, we finally obtain (using that $1/2\ge \tilde{\theta}$ and that $2(\theta-\tilde{\theta})=\frac{1}{2}+\theta$),
\bqn\label{eq:contint}
|\int_s^t e^{\kappa_0 (u-s)}(\nabla V (X_u)|\sigma(X_u) dB_u)|\le C (t-s)^{\frac{1}{2}+\theta} V(X_s)+ \tilde{\beta}
\eqn
where $\tilde{\beta}$ is a positive constant. Plugging this control into \eqref{eq:ito}, we deduce: for every $0\le s<t\le 1$ such that $t-s\le \eta$,
$$ V(X_t)\le e^{-\kappa_0 (t-s)} V(X_s) (1+C (t-s)^{\frac{1}{2}+\theta})+\hat{\beta}$$
where $\hat{\beta}=\beta_0 \eta+ \tilde{\beta}$.
Using that  $e^{-\kappa_0 u}\le 1-\kappa_0 u+ \frac{(\kappa_0 u )^2}{2}$ in a right neighborhood of $0$ and that $\frac{1}{2}+\theta>1$,  we can find $u_0\in[0,1]$ (depending on $\kappa_0$, $\theta$ and $C$) such that 
$$\forall u\in[0,u_0], \quad e^{-\kappa_0 u}(1+Cu^{\frac{1}{2}+\theta})\le 1-\frac{\kappa_0}{2}u.$$
Thus, for every $0\le s<t\le 1$ such that $t-s\le \tilde{\eta}:=\eta\wedge u_0$,
$$ V(X_t)\le (1-\frac{\kappa_0}{2}(t-s)) V(X_s)+\hat{\beta}.$$
In particular, applying this control on $[k\tilde{\eta}, ((k+1)\tilde{\eta})\wedge 1]$ for $k\in\{0,\ldots,\lfloor \frac{1}{\tilde{\eta}}\rfloor\}$ yields
$$ V(X_1)\le (1-\frac{\kappa_0}{2}\tilde{\eta})^{\lfloor \frac{1}{\tilde{\eta}}\rfloor} V(x)+\sum_{k=1}^{\lfloor \frac{1}{\tilde{\eta}}\rfloor} (1-\frac{\kappa_0}{2}\tilde{\eta})^{{\lfloor \frac{1}{\tilde{\eta}}\rfloor}-k} \hat{\beta}.$$
It follows from standard computations that  
$$ V(X_1)\le \exp(-\frac{\kappa_0}{2}+\tilde{\eta}) V(x)+\frac{2\tilde{\beta}}{\kappa_0\tilde{\eta}}.$$
We can assume without loss of generality that $u_0\le \kappa_0/4$ so that 
$$\exp(-\frac{\kappa_0}{2}+\tilde{\eta})\le e^{-\frac{\kappa_0}{4}}=:\bar\rho.$$
Finally, since $2/\tilde{\theta}\ge 1/\theta$, one can check that there exists $C>0$ such that  
$$\tilde\eta^{-1}\le C(1+\|B\|_\theta^{0,1})^{\frac{1}{\tilde{\theta}}}.$$ 
Since $2/\tilde{\theta}=\frac{4}{2\theta-{1}}$, this concludes the proof of Step 4. 

\smallskip 
\noindent To prove the proposition, it remains now to set $\Psi=V^{{\tilde{\theta}}}$ and to apply the inequality $|u+v|^{\bar{p}}\le |u|^{\bar{p}}+|v|^{\bar{p}}$ (which holds for every real numbers $u,v$ and $\bar{p}\in(0,1]$) with $\bar{p}=\tilde{\theta}$.
\end{proof}

\subsection{Proof of $\Hpdeux$}\label{subsec:hpdeux}
The main result of this section is Proposition \ref{prop:contdubruit}. Before, we need to establish several lemmas related to the control of the past of the fBm.\smallskip

\noindent Let $j\in\{1,2\}$. We recall that for every $0\le s<t$,
$$B_t^j-B_s^j=\alpha_H\left(\int_{-\infty}^s (t-r)^{H-\frac{1}{2}}-(s-r)^{H-\frac{1}{2}} dW_r^j+\int_s^t (t-r)^{H-\frac{1}{2}} dW_r^j\right).$$
This can be rewritten
$$B_t^j-B_s^j=\alpha_H\left(\int_{-\infty}^{\lfloor s\rfloor -1} (t-r)^{H-\frac{1}{2}}-(s-r)^{H-\frac{1}{2}} dW_r^j+
\Gamma_1(s,t,W^j)-\Gamma_2(s,t,W^j)+\Gamma_3(s,t,W^j)\right)$$
where, setting $h=t-s$, 
\begin{align*}
&\Gamma_1(s,t,W^j)=\int_{\lfloor s\rfloor -1}^{s-h} (t-r)^{H-\frac{1}{2}}-(s-r)^{H-\frac{1}{2}} dW_r^j,\\
&\Gamma_2(s,t,W^j)=\int_{s-h}^{s} (s-r)^{H-\frac{1}{2}} dW_r^j\quad\textnormal\quad \Gamma_3(s,t,W^j)=\int_{s-h}^{t} (t-r)^{H-\frac{1}{2}} dW_r^j.
\end{align*}
Let $k\ge 1$. Assume that $\tau_{k-1}<+\infty$ and that $\tau_{k-1}\le s<t\le \lfloor s\rfloor+1$. Setting $\tau_{-1}=-\infty$, we  choose to decompose the first right-hand member with respect to the sequence $(\tau_k)_{k\ge-1}$:
$$\int_{-\infty}^{\lfloor s\rfloor -1} (t-r)^{H-\frac{1}{2}}-(s-r)^{H-\frac{1}{2}} dW_r^j=\sum_{m=0}^{k}\Lambda_{m,k}(s,t,W^j)$$
with 
$$\Lambda_{m,k}(s,t,W^j)=\begin{cases} \int_{\tau_{m-1}}^{\tau_m\wedge \tau_{k-1}-1} (t-r)^{H-\frac{1}{2}}-(s-r)^{H-\frac{1}{2}} dW_r^j&\textnormal{if $m\in\{0,\ldots, k-1\}$}\\
&\\
\int_{\tau_{k-1}-1}^{\lfloor s\rfloor -1}(t-r)^{H-\frac{1}{2}}-(s-r)^{H-\frac{1}{2}} dW_r^j&\textnormal{if $m=k$.}
\end{cases}
$$
Note that for $i=1,2,3$, $\Gamma_i$ is related to the local behavior of the fBm whereas for $m=0,\ldots, k$, $\Lambda_{m,k}$ is a memory term.
The idea of the sequel of the proof is to bound $\|B\|_{\theta}^{u,u+1}$ $(u\in\{\tau_{k-1},\ldots,\tau_k\})$ through the study of the $\Gamma_i$ and the $\Lambda_{m,k}$.
With a slight abuse of notation, we will sometimes write 
\begin{equation}\label{eq:notabus}
\|\Gamma_i^j\|_\theta^{a,b}=\sup_{a\le s<t\le b} \frac{|\Gamma_i(s,t,W^j)|}{|t-s|^{\theta}}\quad \textnormal{and}\quad \|\Lambda_{m,k}\|_\theta^{a,b}=\sup_{a\le s<t\le b} \frac{|\Lambda_{m,k}(s,t,W^j)|}{|t-s|^{\theta}}.
\end{equation}
The starting point of the study of the $\Lambda_{m,k}$ is  the following lemma:
\begin{lemme}\label{lemme:IPP} Let $a<b<s<t.$ Let $W$ be a two-sided Brownian motion. Then, 
$$\frac{1}{t-s}\left|\int_a^b (t-r)^{H-\frac{1}{2}}-(s-r)^{H-\frac{1}{2}} dW_r\right|\le (t-a)^{H-\frac{3}{2}}|W_b-W_a|+\frac{1}{2}\int_a^b (s-r)^{H-\frac{5}{2}}|W_r-W_b|dr.$$
\end{lemme}
\begin{proof}
By an integration by parts,
\begin{equation}\label{eq:ipp11}
\begin{split}
\int_a^b (t-r)^{H-\frac{1}{2}}-(s-r)^{H-\frac{1}{2}} dW_r&=\left((t-a)^{H-\frac{1}{2}}-(s-a)^{H-\frac{1}{2}}\right)(W_{a}-W_b)\\
&+(H-\frac{1}{2}) \int_{a}^{b} \left((t-r)^{H-\frac{3}{2}}-(s-r)^{H-\frac{3}{2}}\right)(W_r-W_b)dr.
\end{split}
\end{equation}
On the one hand, by the elementary inequality $(1+x)^\rho\ge 1+x$ for every $x\in(-1,0]$ and $\rho\in(0,1]$, we  remark that
\begin{align*}
0\le (t-a)^{H-\frac{1}{2}}-(s-a)^{H-\frac{1}{2}}&=
(t-a)^{H-\frac{1}{2}}\left(1-\left(1+\frac{s-t}{t-a}\right)^{H-\frac{1}{2}}\right)\\
&\le (t-s)(t-a)^{H-\frac{3}{2}}
\end{align*}
On the other hand, by the inequality $(1+x)^\rho\ge1+\rho x$ for $x\ge0$ and $\rho<0$, we obtain similarly 
\begin{align*}
(s-r)^{H-\frac{3}{2}}-&(t-r)^{H-\frac{3}{2}}\le \left(\frac{3}{2}-H\right)(t-s) (s-r)^{H-\frac{5}{2}}.
\end{align*}
The result follows (using that $(3/2-H)(H-1/2)\le 1/2$).
\end{proof}
In the next lemma, we propose to bound some quantities which are related to those which appear in the previous lemma on some sub-intervals of $[\tau_{m-1},\tau_m]$ where $m\in\EN$. With the notations introduced in \eqref{durationstep3} and in \eqref{eq:skuv}, we set 
$$\tau_{m}^0=\tau_{m-1},\quad\tau_{m}^1=\tau_{m-1}+1+ 2 c_2,\quad \tau_{m}^2=s_{m,\ell_m^*}\vee \tau_{m}^1,\quad \tau_{m}^3=s_{m,\ell_m^*+1}\quad \textnormal{and} \quad \tau_{m}^4=\tau_m.$$ Since $c_3$ defined in \eqref{durationstep3} satisfies $c_3\ge 2 c_2$, Step 3 is longer than $ 2 c_2$ and $i\mapsto\tau_m^i$ is non-decreasing. Furthermore, $\tau_{m}^0$ is the beginning of Step 1, $\tau_{m}^1$ denotes the end of the first trial of Step 2 (or some time during Step 3)  if Step 1 is successful (resp. if  Step 1 fails). If Step 1 and the first trial of Step 2 are successful, $\tau_{m}^2$ and $\tau_{m}^3$ correspond to the beginning and to the end of the failed trial of Step 2. If $\ell_m^*\in\{0,1\}$, $\tau_m^1=\tau_m^2=\tau_m^3$.

 Note that $\tau_m^1$ is defined as the end of the first trial of Step $2$, instead of the end of Step 1 as it could be expected.  
 Without going into the technical details, let us remark that this particular cutting of the interval is due to the dependence in $K$ (which appears in the $(K,\alpha)$-admissibility condition) of the probability of success of the first trial of Step 2 (and that this dependence does not appear for the next trials, see Remark \ref{indep:K} for background) and that, in view of Assumption $\Hpdeux$, it is of first importance that the next results be obtained independently of $K$.
 
\begin{lemme}\label{lemma:lembrown} Assume that there exists ${\delta_1}>0$ such that for all $m\in\EN$ and $K>0$ $\PE({\cal E}_{m+1}|{\cal E}_{m})\ge {\delta_1}>0$. Then,
for every $p\ge 1$ and $\varepsilon\in(0,1)$, there exists $C_{p,\varepsilon,{\delta_1}}\in\ER_+^*$ such that for every $m\in\EN$, $i\in\{0,\ldots,3\}$, $j\in\{1,2\}$ and $K>0$,

\smallskip
\noindent (i)
\bqn\label{eq:lembrown2}
\ES\left[\left(\int_{\tau_{m}^i}^{\tau_{m}^{i+1}}\left|(1+\tau_{m}^{i+1}-r)^{-\left(\frac{3}{2}+\varepsilon\right)}(W_{\tau_{m}^{i+1}}^j-W_r^j)\right| dr\right)^p|{\cal E}_m\right]\le C_{p,\varepsilon,{\delta_1}}.
\eqn
(ii) If $\tau_m^i\neq\tau_{m}^{i+1}$,
\bqn\label{eq:lembrown}
\ES\left[\left|(\tau_{m}^{i+1}-\tau_{m}^i)^{-\left(\frac{1}{2}+\varepsilon\right)}\left(W_{\tau_{m}^{i+1}}^j-W_{\tau_{m}^i}^j\right)\right|^p|{\cal E}_m\right]\le C_{p,\varepsilon,{\delta_1}},
\eqn
\end{lemme}
\begin{Remarque} The proof of this lemma could be shortened by using some rougher arguments similar to those of the proof of Proposition \ref{prop:contdubruit}  below (see \eqref{argumentrough}). However, the  arguments given here do provide an understanding of what implies the conditioning by $\{\tau_m<+\infty\}$, or in other words, to how the distribution of the Wiener process is deformed  by the coupling attempt.  To this end and when it is possible (especially in the case $i=1$), we thus choose an approach by which we try to make explicit these distortions. 
\end{Remarque}
\begin{proof} 
%
%On the event ${\cal E}_m$, we can denote by $N_{m}^*$, the (first) failure of Step 2. More precisely, if Step 1 fails, $N_m^*=0$
%and if Step 2 fails after $k$ trials, then $N_m^*=k$.   We divide  $[\tau_{m-1},\tau_m]$ in four intervals $[\tau_{m}^i,\tau_{m}^{i+1}]$, $i=0,1,2,3$ where $\tau_{m}^0=\tau_{m-1}$,
%$\tau_{m}^1=\tau_{m-1}+1$, $\tau_{m}^2=\tau_{m-1}+1+(\sum_{k=1}^{(N^*-1)} 2^k)1_{N^*\ge2}$ (the last time of success of Step 2),
%$\tau_{m}^3=\tau_{m-1}+1+(\sum_{k=1}^{N^*} 2^k)1_{N^*\ge1}$ (end of Step 2) and $\tau_{m}^4=\tau_m$.
%Using the elementary inequality $|\sum_{k=1}^4 u_i|^p\le 4^p\sum_{k=1}^4 |u_i|^p$, we remark the lemma is true as soon as for $i=1,\ldots,4$,
%\bqn\label{eq:i14}
%\exists C_p>0\textnormal{ such that }\forall j,m\in\{1,2\}\times\EN\quad \ES[\sup_{r\in[\tau_{m}^i,\tau_{m}^{i+1}]}|W^j_{\tau_{m}^i}-W^j_r|^p|{\cal E}_m]\le C_p \ES[(\tau_{m}^{i+1}-\tau_{m}^i)^p|{\cal E}_m].
%\eqn
$(i)$ By a change of variable, for every $i\in\{0,1,2,3\}$, 
$$\int_{\tau_{m}^i}^{\tau_{m}^{i+1}}(1+\tau_{m}^{i+1}-r)^{-\left(\frac{3}{2}+\varepsilon\right)}|W_{\tau_{m}^{i+1}}^j-W_r^j| dr=H_i(\tau_{m}^{i+1}-\tau_{m}^i)$$
where for a given $c>0$,
$$H_i(c)=\int_0^c (1+u)^{-\frac{1}{2}} |W^j_{\tau_{m}^{i+1}}-W^j_{\tau_{m}^{i+1}-u}| \nu(du)\quad \textnormal{with}\quad \nu(dr)=(1+u)^{-1-\varepsilon} du.$$
Noticing  that $\nu([0,c])\le \varepsilon^{-1}$, we deduce from  Jensen inequality that for every $p\ge1$,
\bqn\label{eq:HMC}
(H_i(c))^p\le \left(\frac{1}{\varepsilon}\right)^{p-1}\int_0^c (1+u)^{-\frac{p}{2}-1-\varepsilon}|W^j_{\tau_{m}^{i+1}}-W^j_{\tau_{m}^{i+1}-u}|^{p}du.
\eqn
Now, we focus successively on  cases $i=0,1,2,3$:

\smallskip
\noindent \underline{$i=0$:}  In this case, $\tau_{m}^{1}-\tau_{m}^{0}$ is deterministic and is equal to $\bar{c}:=1+2 c_2$.  
Using that ${\cal E}_{m}\subset{\cal E}_{m-1}$ and the Cauchy-Schwarz inequality, we have for every $u\in[0,\bar{c}]$ and $m\ge1$,
$$\ES[|W_{\tau_{m}^{1}}-W_{\tau_{m}^{1}-u}|^{{p}}|{\cal E}_m]\le  \frac{\ES[|W^j_{\tau_{m}^{1}}-W^j_{\tau_{m}^{1}-u}|^{2p}|{\cal E}_{m-1}]^\frac{1}{2}}{\PE({\cal E}_m|{\cal E}_{m-1})^{\frac{1}{2}}}.$$
But, conditionally on $\{\tau_{m-1}<+\infty\}$, $(W^j_{\tau_{m-1}+u}-W^j_{\tau_{m-1}},u\ge0)$ is a Brownian motion independent of ${\tau_{m-1}}$ so that 
$$\ES[|W^j_{\tau_{m}^{i+1}}-W^j_{\tau_{m}^{i+1}-u}|^{2p}|{\cal E}_{m-1}]=u^{p}.$$
Then, since $\PE({\cal E}_m|{\cal E}_{m-1})\ge \delta_1$, one deduces that 
\begin{equation}\label{eq:bro0}
\sup_{u\in [0,\bar{c}]} u^{-\frac{p}{2}} \ES[|W^j_{\tau_{m}^{1}}-W^j_{\tau_{m}^{1}-u}|^{{p}}|{\cal E}_m]\le \delta_1^{-\frac{1}{2}}.
\end{equation} 
Plugging this control into \eqref{eq:HMC} yields the result when $i=0$ with $C_{p,\varepsilon,\delta_1}=\delta_1^{-\frac{1}{2}}\varepsilon^{-p}$.

\smallskip
\noindent \underline{$i=1$:} If $\ell_k^*\in\{0,1,2\}$, $\tau_{m}^1=\tau_{m}^2$. Otherwise, we first write ${\cal E}_m=\bigcup{\cal A}_{m,\ell}$ where  ${\cal A}_{m,\ell}={\cal B}_{m,\ell}^c\cap {\cal B}_{m,\ell-1}$.  
%\bqn\label{eq:amell}
%{\cal A}_{m,\ell}:={\cal E}_{m-1}\cap \{\tau_{m}^2-\tau_{m}^1=(\sum_{\kbis=1}^{\ell-1} 2^{\kbis})1_{\ell\ge2}\}.
%\eqn
We recall that ${\cal A}_{m,0}$ corresponds to the failure of Step 1 and for every $\ell\geq 1$, 
$ {\cal A}_{m,\ell}$ is the event that Step 2 failed after exactly $\ell$ trials. 

With the notations introduced in \eqref{eq:skuv}, we recall that on  ${\cal A}_{m,\ell}$, $\tau_m^1=s_{m,2}$ and 
$\tau_{m}^2=s_{m,\ell}$.
% Denote by
% $(s_{m,\kbis})_{\kbis\ge0}$ the sequence defined by  
%$$s_{m,0}=s_{m,1}=\tau_{m-1}+1\quad\textnormal{and}\quad  s_{m,\kbis+1}=s_{m,\kbis}+2^\kbis, \quad \kbis\ge1.$$
By \eqref{eq:HMC}, it is enough to show that  for every $\ell\ge3$,
%\bqn\label{eq:etape2simp}
%\ES\left[\left(\int_{\tau_{m}^1}^{\tau_{m}^{2}}\left|(1+\tau_{m}^{2}-r)^{-\left(\frac{3}{2}+\varepsilon\right)}(W_{\tau_{m}^{2}}^j-W_r^j)\right| dr\right)^p|{\cal A}_{m,\ell}\right]\le C_{p,\varepsilon,{\delta_1}}
%\eqn
%where the constant $C_{p,\varepsilon,{\delta_1}}$ does not depend on  $j$, $\ell$ and $m$. For $\ell=0,1$, the result is obvious since $\tau_{m}^0=\tau_{m}^1$. 
%For $\ell \ge 2$, $S_{m,2}\ge S_{m,1}+1$ $a.s$. We define  the random  measure ${\mu}_\omega(dr)$ by 
%$$\mu_\omega(dr)=1_{\{r\in[\tau_{m}^1,\tau_{m}^2]\}} (1+\tau_{m}^2-r)^{-1-{\varepsilon}} dr.$$
%Note that $\mu_\omega(\ER)\le \int_0^{+\infty} (1+u)^{-1-{\varepsilon}} du=1/\varepsilon$. Then, applying the Jensen inequality to 
%$\tilde{\mu}_\omega(dr)=\mu_\omega/\mu_\omega(\ER)$, we deduce that 
%$$\left(\int_{\tau_{m}^1}^{\tau_{m}^{2}}\left|(1+\tau_{m}^{2}-r)^{-\left(\frac{3}{2}+\varepsilon\right)}(W_{\tau_{m}^{2}}^j-W_r^j)\right| dr\right)^p\le \left(\frac{1}{\varepsilon}\right)^p\int
%\left|(1+\tau_{m}^{2}-r)^{-\frac{1}{2}}(W_{s_{m,\ell}}^j-W_r^j)\right|^p \tilde{\mu}_\omega(dr)$$
%so that by a change of variable, we obtain
\bqn\label{eq:etape2simp}
\begin{split}
\int_{0}^{s_{m,\ell}-s_{m,2}}(1+u)^{-\frac{p}{2}-1-{\varepsilon}}
\ES[|(W_{s_{m,\ell}}^j-W_{s_{m,\ell}-u}^j)|^p|{\cal A}_{m,\ell}] du\le C_{p,\varepsilon,\delta_1}.
\end{split}
\eqn
%\bqne
%\ES[\sup_{r\in[\tau_{m}^1,\tau_{m}^{2}]}|W^j_{\tau_{m}^2}-W^j_r|^p|{\cal A}_{m,\ell}]\le C_{p,\varepsilon} \left((s_{m,\ell}-s_{m,1}\right)^{p(\frac{1}{2}+\varepsilon)}
%\eqne
%It remains to prove that the right-hand side of \eqref{eq:etape2simp} is bounded by a finite constant which  does not depend on $\ell$ and $m$.
where $ C_{p,\varepsilon,\delta_1}$ does not depend on $k$, $m$, $\ell$ and $K$. 
With the notations of  Lemma  \ref{lemme:step2.2}, we know that on the event ${\cal A}_{m,\ell}$ with $\ell \ge 2$, we have for all $\kbis\in\{1,\ldots,\ell-1\}$,
$$ \forall t\in[s_{m,\kbis},s_{m,\kbis+1}], \quad W^2_t=W^1_t+\int_{s_{m,\kbis}}^t g_{_S}(s)ds\quad$$
where $g_{_S}$ is a ${\cal F}_{\tau_{m-1}+1}$-measurable function (defined in Lemma \ref{lemme:step2.2}). 
%satisfying $\int_0^{s_{m-1,\ell}-\tau_{m-1}-1} (1+t)^{2\alpha}|g(t+\tau_{m-1})|^2dt\le C.$ 
Moreover, by Lemma \ref{cor:couplingexpansion}$(ii)$, which can be applied with $b= 2^{-\alpha v}$ (owing to Lemma \ref{lemme:step2.2} and Remark \ref{rem:valeurc2}), $W^j$, $j=1,2$ can be realized as follows on $[s_{m,\kbis},s_{m,\kbis+1}]$;
$$\forall t\in[s_{m,\kbis},s_{m,\kbis+1}], \quad W^j_t=(U_j^{m,\kbis}+V_{m,\kbis})\frac{\int_{s_{m,\kbis}}^{t} {g_{_S}}(s)ds}{\|g_{_S}\|_{[s_{m,\kbis},s_{m,\kbis+1}],2}}+\tilde{W}^{m}_{t-s_{m,2}},$$
where $(\tilde{W}_t^m)_{t\ge0}$ is a standard Brownian motion, $(V_{m,\kbis})_{\kbis\ge1}$ is a sequence of $i.i.d.$ normally distributed random variables, 
and 
$$\forall \kbis\in\{2,\ldots,\ell-1\},\; \forall \omega\in {\cal A}_{m,\ell},\; |U_{j}^{m,\kbis}(\omega)|\le \frac{1}{2}\max\{ 2^{2-\alpha v},-2\log (\frac{2^{-\alpha \kbis}}{8})\}+2^{-\alpha v}\le C \log (2^{\alpha v})$$ 
 where $C$ does not only depend on $\alpha$. Furthermore, $(\tilde{W}_t^m)_{t\ge0}$  and $(V_{m,\kbis})_{\kbis\ge1}$
are independent of  $(U_1^{m,\kbis},U_2^{m,\kbis})_{\kbis}$ and $g_{_S}$. In particular, $(\tilde{W}_t^m)_{t\ge0}$  and $(V_{m,\kbis})_{\kbis\ge1}$ are independent of ${\cal A}_{m,\ell}$.
Set $s_{m,\kbis}^u=s_{m,\kbis}\vee (s_{m,\ell}-u)$. The above properties imply that for every  { $u\in[0, s_{m,\ell}-s_{m,2}]$, } \
\bqn\label{ddiooei}
\begin{split}
\ES[|W^j_{s_{m,\ell}}-W^j_{s_{m,\ell}-u}|^p|{\cal A}_{m,\ell}]&\le C_p\ES\left[\left(\sum_{\kbis=2}^{\ell-1} \log (2^{\alpha \kbis})\frac{\int_{s_{m,\kbis}^u}^{s_{m,\kbis+1}^u} |g_{_S}(s)|ds}{\|g_{_S}\|_{[s_{m,\kbis},s_{m,\kbis+1}],2}}\right)^p|{\cal A}_{m,\ell}\right]\\
&+C_p\ES\left[\left|\sum_{\kbis=2}^\ell V_{m,\kbis}\frac{\int_{s_{m,\kbis}^u}^{s_{m,\kbis+1}^u} g_{_S}(s)ds}{\|g_{_S}\|_{[s_{m,\kbis},s_{m,\kbis+1}],2}}\right|^p|{\cal A}_{m,\ell}\right]\\
&+C_p\ES[ |\tilde{W}^m_{s_{m,\ell}-s_{m,2}}-\tilde{W}^m_{s_{m,\ell}-s_{m,2}-u}|^p].
\end{split}
\eqn  
We focus successively on each term of the right-hand side of the above inequality. First,
\bqn\label{eq:bro1}
\ES[ |\tilde{W}^m_{s_{m,\ell}-s_{m,2}}-\tilde{W}^m_{s_{m,\ell}-s_{m,2}-u}|^p]=u^{\frac{p}{2}} \ES[|U|^p]
\eqn
%\bqn\label{dejkl}
%\ES[\sup_{r\in[0,t]}|\tilde{W}_r|^p]\le 2\ES[(\sup_{r\in[0,t]} \tilde{W}_r)^p]=2t^{\frac{p}{2}}\ES[|U|^p],\quad U\sim {\cal N}(0,1)
%\eqn
%we easily deduce that for every $p>0$, there exists $C_p>0$ such that 
%$$\forall m,\ell \quad\ES[\sup_{r\in[0, s_{m,\ell}-s_{m,2}]} |\tilde{W}_r|^p]\le C_p (s_{m,\ell}-s_{m,2})^{\frac{p}{2}}.$$
where $U$ stands for a normally distributed random variable.

\smallskip
\noindent For the first right-hand member term of \eqref{ddiooei}, we deduce from the Cauchy-Schwarz inequality that
\bqne
\sum_{\kbis=2}^{\ell-1} \log (2^{\alpha \kbis})\frac{\int_{s_{m,\kbis}^u}^{s_{m,\kbis+1}^u} |g_{_S}(s)|ds}{\|g_{_S}\|_{[s_{m,\kbis},s_{m,\kbis+1}],2}}
\le \left(\sum_{\kbis=2}^{\ell-1} \log (2^{\alpha \kbis})^2\right)^{\frac{1}{2}}\left(\sum_{\kbis=2}^{\ell-1}\left(\frac{\int_{s_{m,\kbis}^u}^{s_{m,\kbis+1}^u} |g_{_S}(s)|ds}{\|g_{_S}\|_{[s_{m,\kbis},s_{m,\kbis+1}],2}}\right)^2\right)^{\frac{1}{2}}
\eqne
Using that $\kbis\mapsto\log (2^{\alpha \kbis})$ is non-decreasing and that 
\bqn\label{eq:codni}
\left(\int_{s_{m,\kbis}^u}^{s_{m,\kbis+1}^u} |g_{_S}(s)|ds\right)^2\le (s_{m,\kbis+1}^u-s_{m,\kbis}^u)\left(\|g_{_S}\|_{[s_{m,\kbis},s_{m,\kbis+1}],2}\right)^2,
\eqn
we deduce that 
\bqne
\sum_{\kbis=2}^{\ell-1} (\log (2^{\alpha \kbis}))\frac{\int_{s_{m,\kbis}^u}^{s_{m,\kbis+1}^u} |g_{_S}(s)|ds}{\|g_{_S}\|_{[s_{m,\kbis},s_{m,\kbis+1}],2}}
\le \sqrt{\ell}\log (2^{\alpha \ell})u^{\frac{1}{2}}.
\eqne
Using that for all $\ell \ge 3$, $s_{m,\ell}-s_{m,2}= c_2 2^{\ell-1}$ with $c_2\ge 1$, one deduces that  for every positive $p$ and $\varepsilon$, there exists $C_{p,\varepsilon}$ such that for every $\ell\ge 3$,  $\sqrt{\ell}\log (2^{\alpha \ell})\le C_{p,\varepsilon} (s_{m,\ell}-s_{m,2})^{\frac{\varepsilon}{p}}$ (we recall that $\alpha$ is a fixed number of $(0,1/2)$).  As a consequence,
%\bqne
%C_{p,\varepsilon}:=\sup_{\ell\ge 2}\frac{\sqrt{\ell}\log (2^{\alpha (\ell)})}{(s_{m,\ell}-s_{m,2})^{\frac{\varepsilon}{p}}}= \sup_{\ell\ge 2}\frac{\sqrt{\ell-1}\log (2^{\alpha (\ell-1)}))^2}{(2^\ell-2)^{\frac{\varepsilon}{p}}}<+\infty
%\eqne
%we obtain  that 
the first right-hand member term of \eqref{ddiooei} satisfies for every $u\in[0, s_{m,\ell}-s_{m,2}]$
\bqn\label{eq:bro2}
\ES\left[\left(\sum_{\kbis=2}^{\ell-1} \log (2^{\alpha \kbis})\frac{\int_{s_{m,\kbis}^u}^{s_{m,\kbis+1}^u} |g_{_S}(s)|ds}{\|g_{_S}\|_{[s_{m,\kbis},s_{m,\kbis+1}],2}}\right)^p|{\cal A}_{m,\ell}\right]\le C_{p,\varepsilon}(s_{m,\ell}-s_{m,2})^{{\varepsilon}} u^{\frac{p}{2}}
\eqn
where $C_{p,\varepsilon}$ is the constant defined above.
Finally, for the second right-hand member term of \eqref{ddiooei}, let us define  $(X_{m,\kbis}^u)_{\kbis=2}^{\ell-1}$ by
$$\forall \kbis\in\{2,\ldots,\ell-1\},\qquad X_{m,\kbis}^u=V_{m,\kbis}\frac{\int_{s_{m,\kbis}^u}^{s_{m,\kbis+1}^u} g(s)ds}{\|g\|_{[s_{m,\kbis},s_{m,\kbis+1}],2}}.$$
Since $(V_{m,\kbis})_{\kbis\ge1}$ is centered and independent of $g_{_S}$ and ${\cal A}_{m,l}$,  it follows that $(X_{m,\kbis}^u)_{\kbis=2}^{\ell-1}$ is a sequence of  martingale increments under $\PE(.|{\cal A}_{m,\ell})$. By the Doob inequality and \eqref{eq:codni}, we deduce that,
\begin{align}
\ES\left[\left|\sum_{\kbis=2}^{\ell-1} V_{m,\kbis}\frac{\int_{s_{m,\kbis}^u}^{s_{m,\kbis+1}^u} g_{_S}(s)ds}{\|g_{_S}\|_{[s_{m,\kbis},s_{m,\kbis+1}],2}}\right|^p|{\cal A}_{m,\ell}\right]
&\le C_p\ES\left[\left(\sum_{\kbis=2}^{\ell-1} \left(\frac{\int_{s_{m,\kbis}^u}^{s_{m,\kbis+1}^u} g_{_S}(s)ds}{\|g_{_S}\|_{[s_{m,\kbis},s_{m,\kbis+1}],2}}\right)^2\right)^{\frac{p}{2}}|{\cal A}_{m,\ell} \right]\nonumber\\
&\le C_p \left(\sum_{\kbis=2}^{\ell-1} (s_{m,\kbis+1}^u-s_{m,\kbis}^u)\right)^{\frac{p}{2}}\le C_p u^{\frac{p}{2}}.\label{eq:bro3}
\end{align}
%where we used in the last line that $\int_{s_{m,\kbis}^u}^{s_{m,\kbis+1}^u} |g(s)|ds\le \sqrt{s_{m,\kbis+1}^u-s_{m,\kbis}^u}\|g\|_{[s_{m,\kbis},s_{m,\kbis+1}],2}$ .

\smallskip
\noindent By \eqref{eq:bro1}, \eqref{eq:bro2} and \eqref{eq:bro3}, we obtain that there exists $C_{p,\varepsilon}\in\ER_+^*$ such that for all $m\in\EN$, $\ell\ge2$,
$j\in\{1,2\}$ and $u\in[0,s_{m,\ell}-s_{m,2}]$,
\bqn\label{eq:bro4}
\ES[|W^j_{s_{m,\ell}}-W^j_{s_{m,\ell}-u}|^p|{\cal A}_{m,\ell}]\le C_{p,\varepsilon} (s_{m,\ell}-s_{m,2})^{\varepsilon} u^{\frac{p}{2}}.
\eqn
The results follows by plugging this inequality into \eqref{eq:etape2simp}.

\smallskip
\noindent \underline{$i=2$:} 
Here, we consider the interval where Step 2 fails. With the previous notations, $\tau_{m}^2=s_{m,\ell}$ and $\tau_{m}^3=s_{m,\ell+1}$ on ${\cal A}_{m,\ell}$ when $\ell\ge 2$. By a similar strategy as in the case $i=1$ (see in particular \eqref{eq:etape2simp} and \eqref{eq:bro4}), 
it is enough to show that there exists $C_{p,\varepsilon}$ such that
for all $m\in\EN$, $\ell\ge 0$,
$j\in\{1,2\}$ and $u\in[0,s_{m,\ell+1}-s_{m,\ell}]$,
\bqn\label{eq:bro5}
\ES[|W^j_{s_{m,\ell+1}}-W^j_{s_{m,\ell+1}-u}|^p|{\cal A}_{m,\ell}]\le C_{p,\varepsilon} (s_{m,\ell+1}-s_{m,\ell})^{\varepsilon} u^{\frac{p}{2}}.
\eqn
When $\ell=0,1$, $\tau_{m}^3-\tau_{m}^2=0$ so that the property is obvious. 
%For positive $a,b$ such that $a<b$, denote by $F_{a,b}$ the event defined by
%$$F_{a,b}:=\{\exists t_0\in[a,b], W^2_{t_0}-W^1_{t_0}\neq \int_{a}^{t_0} g(s) ds\}.$$
Let us consider the set ${\cal B}_{m,\ell-1}$ defined by \eqref{eq:bkl}. From the very definition, ${\cal A}_{m,\ell} \subset {\cal B}_{m,\ell-1}$.  

By Hölder inequality applied with some $\tilde{p}>1$ and $\tilde{q}>1$ such that $1/\tilde{p}+1/\tilde{q}=1$, we have
$$
\ES[|W^j_{s_{m,\ell+1}}-W^j_{s_{m,\ell+1}-u}|^p|{\cal A}_{m,\ell}]\le \ES[|W^j_{s_{m,\ell+1}}-W^j_{s_{m,\ell+1}-u}|^{p\tilde{p}}|{\cal B}_{m,\ell-1}]^{\frac{1}{\tilde{p}}}\PE({\cal A}_{m,\ell}|{\cal B}_{m,\ell-1})^{ \frac{1}{\tilde{q}}-1}.$$
On the one hand, we deduce from the independence of the increments of the Brownian motion  that
$$\ES[|W^j_{s_{m,\ell+1}}-W^j_{s_{m,\ell+1}-u}|^{p\tilde{p}}|{\cal B}_{m,\ell-1}]^{\frac{1}{\tilde{p}}}=u^{\frac{p}{2}} \ES[|U|^{p\tilde{p}}]^{\frac{1}{\tilde{p}}}$$
where $U$ stands for a normally distributed random variable.  
%  of $(W^j_{S_{m,3}}-W^j_{S_{m,3}-u},0\le u\le S_{m,3}-S_{m,2})$ with ${\cal F}_{S_{m,2}}$ that
%$$
%\ES[|W^j_{s_{m,\ell+1}}-W^j_{s_{m,\ell+1}-u}|^p|{\cal A}_{m,\ell}]=
%\ES[|W^j_{s_{m,\ell+1}}-W^j_r|^p|F_{s_{m,\ell},s_{m,\ell+1}}]
%$$ and it follows from the Hölder inequality applied with some $\tilde{p}>1$ and $\tilde{q}>1$ such that $1/\tilde{p}+1/\tilde{q}=1$ that
%
%%$$
%\ES[\sup_{r\in[\tau_{m}^2,\tau_{m}^{3}]}|W^j_{\tau_{m}^3}-W^j_r|^p|{\cal A}_{m,\ell}]\le \ES[\sup_{r\in[0,s_{m,\ell+1}-s_{m,\ell}]} |\tilde{W}_r|^{p\tilde{p}}]\PE(F _{s_{m,\ell},s_{m,\ell+1}})^{1-\frac{1}{\tilde{q}}}.
%$$
%where $\tilde{W}$ stands for a standard Brownian motion. Since for a real Brownian motion $(B_t)$ and some positive constants $T$ and $\rho$,  
%$\ES[\sup_{t\le T} |B_t|^\rho\le 2^{\rho}\ES[(\sup_{t\le T} B_t)^\rho]\le 2^{\rho}\ES[|B_T|^\rho]$,
%we have   
%$$\ES[\sup_{r\in[0,s_{m,\ell+1}-s_{m,\ell}]} |\tilde{W}_r|^{p\tilde{p}}]^{\frac{1}{\tilde{p}}}\le C_{p,\tilde{p}} ((s_{m,\ell+1}-s_{m,\ell})^{\frac{p}{2}}.$$
On the other hand, by \eqref{eq:lbbk1},
$$\PE({\cal A}_{m,\ell}|{\cal B}_{m,\ell-1})=\PE({\cal B}_{m,\ell}^c|{\cal B}_{m,\ell-1})\ge 2^{-\alpha \ell -1}.$$
%We recall that $a.s.$,
%$$\int_{s_{m,\ell}}^{s_{m,\ell+1}}|g(s)|^2ds\le b_\ell=C2^{-\alpha \ell}.$$
%By Corollary \ref{cor:couplingexpansion}, the coupling construction can be achieved in such a way ({A préciser}) that 
%$$\PE({\cal A}_{m,\ell}|{\cal B}_{m,\ell})\ge \frac{b_\ell}{2}\wedge\rho$$
%where $\rho\in(0,1)$. For each $\varepsilon<\alpha$, let $\tilde{q}>1$ be such that $\alpha(1-\frac{1}{\tilde{q}})=\varepsilon$. For such a choice,
%$$\PE({\cal A}_{m,\ell}|{\cal B}_{m,\ell})^{\frac{1}{\tilde{q}}-1}\le C 2^{\varepsilon \ell}=(s_{m,\ell+1}-s_{m,\ell})^{\varepsilon }$$
%so that,
%$$\ES[\sup_{r\in[\tau_{m}^2,\tau_{m}^{3}]}|W^j_{\tau_{m}^3}-W^j_r|^p|{\cal A}_{m,\ell}]\le C_{p,\varepsilon}(s_{m,\ell+1}-s_{m,\ell})^{p(\frac{1}{2}+\varepsilon)}.$$
%As a consequence, 
%$$\ES[\sup_{r\in[\tau_{m}^2,\tau_{m}^{3}]}(\tau_{m}^3-\tau_{m}^2)^{-p(\frac{1}{2}+\varepsilon)}|W^j_{\tau_{m}^3}-W^j_r|^p|{\cal A}_{m,\ell}]\le C_{p,\varepsilon}$$
%and the result follows easily in this case. 
Then, for each $\varepsilon\in(0,1)$, Inequality \eqref{eq:bro5} follows by setting $\tilde{q}=(1-\varepsilon)^{-1}$ (so that $1-\frac{1}{\tilde{q}}=\varepsilon$ ). 

\smallskip
\noindent \underline{$i=3$:} This corresponds to Step 3. The key point here is that the increments of the Brownian motion after $\tau_{m}^3$ are independent of the previous coupling attempt so that, denoting by $\Delta_3(m,\ell)$, the length of Step 3 under ${\cal A}_{m,\ell}$, we have 
\bqn\label{eq:bro6}
\ES[|W^j_{\tau_m}-W^j_{\tau_m-u}|^p|{\cal A}_{m,\ell}]=
\ES[|W^j_{\tau_m^3+\Delta_3(m,\ell)}-W^j_{\tau_m^3+\Delta_3(m,\ell)-u}|^p]= C_{p} u^{\frac{p}{2}}.
\eqn
%Let $t_{m,\ell}$ denote the value of $\tau_{m}$ on ${\cal A}_{m,l}$.  Since ${\cal A}_{m,\ell}$ is ${\cal F}_{\tau_{m}^2}$-measurable, we have for every $u\in[0, t_{m,\ell}-s_{m,\ell+1}]$,
%\bqn\label{eq:bro6}
%\ES[|W^j_{t_{m,\ell}}-W^j_{t_{m,\ell}-u}|^p|{\cal A}_{m,\ell}]=
%\ES[|W^j_{t_{m,\ell}}-W^j_{t_{m,\ell}-u}|^p]= C_{p} u^{\frac{p}{2}}
%\eqn
The result then follows similarly to the case $i=1$ (see \eqref{eq:etape2simp}).
%thanks to the scaling properties of the Brownian motion. We do not detail the sequel of the proof which is similar to Step 1 (it is enough
% plug the right-hand member in a version of \eqref{eq:etape2simp} adapted to this case).
%Using that conditionally to ${\cal A}_{m,\ell}$,  $(W^j_{S_{m,3}}-W_r,S_{m,2}\le r\le S_{m,3})$ is independent of 
%${\cal F}_{S_{m,2}}$,  the result in this case follows easily from \eqref{dejkl}. 

\smallskip

\noindent $(ii)$  This result can be easily derived from the controls established previously. More precisely, cases $i=0,1,2,3$ can be viewed as particular cases of  \eqref{eq:bro0}, \eqref{eq:bro4}, \eqref{eq:bro5} and \eqref{eq:bro6}. 

%
%$$
%\ES[\sup_{r\in[\tau_{m}^1,\tau_{m}^{2}]}|W^j_{\tau_{m}^2}-W_j^r|^p|{\cal A}_{m,\ell}]\le C_{p,\varepsilon}(s_{m,\ell}-s_{m,2})^{p(\frac{1}{2}+\varepsilon)}.
%$$

%
%Finall
%
%\ES\left[\left(\sum_{k=1}^{\ell-1} (C\log (2^{\alpha k})+\ES[|V_{m,k}|])\frac{\int_{s_{m,k}}^{s_{m,k+1}} |g(s)|ds}{\|g\|_{[s_{m,k},s_{m,k+1}],2}}\right)^p|{\cal A}_{m,\ell}\right]\le C_{\varepsilon,p}
%
%
%
%by Lemma \acomp $g$ is a ${\cal F}_{\tau_{m-1}+1}$-measurable function satisfying
%$$\int_{{\tau}_{m-1}+1}^{+\infty} (s-\tau_{m-1})^{2\alpha}|g(s)|^2ds\le K<+\infty.$$
%In particular, since 
%\begin{align*}
%\int_{s_{m,k}}^{s_{m,k+1}}  |g(s)|^2ds&\le \frac{1}{(s_{m,k-1}-{\tau}_{m-1})^{2\alpha}}\int_{s_{m,k-1}}^{s_{m,k}} (s-\tau_{m-1})^{2\alpha}|g(s)|^2ds
%\\&\le K (\sum_{u=0}^k2^u)^{-2\alpha}\le 4^{2\alpha} K 2^{-2\alpha N},
%\end{align*}
%we deduce from Lemma (\acomp) that 

%
%where $C$ does only depend on $\theta$, $V$ and the coefficients $b$ and $\sigma$.
%Since $\tilde{\theta}\le \theta$,
%$$\eta
%since $\frac{3}{4}\ge \tilde

%(1+\sup_{u\in[s,t]}|\nabla V(X_u)|)\|X\|_{\theta}^{s,t}. $$
%But, $\nabla V$ is a sublinear function so that by \eqref{eq:grown},
%
%$$ \|(\nabla V)_i\sigma_{i,j}(X)\|_{\theta}^{s,t}\le ($$
\end{proof}
In the next lemma, we adopt the convention $\sum_{\emptyset}=1$. Also, let us recall that by \eqref{durationstep3}, $\Delta \tau_k\ge c_3 a_k\ge a_k$ since 
$c_3\ge 2 c_2\ge 1$ (and that $c_2\ge1$  by Remark \ref{rem:valeurc2}).
\begin{lemme}\label{lemme:lambdamk}
Let $\Psi$ satisfy  $\int\Psi(x_1)\bar\mu_1(dx_1)+\int\Psi(x_2)\bar\mu_2(dx_2)<+\infty$ and assume that $\tau_0$ is defined in terms of this function as in \eqref{def:tau0}.  Assume that there exists ${\delta_1}>0$ such that for every $m\ge1$ and $K>0$, $\PE({\cal E}_m|{\cal E}_{m-1})\ge \delta_1\in(0,1)$.
Then, for $j=1,2$ and  {for every $\rho\in (0,1)$} and $\varepsilon\in(0,1-H)$, there exists $C$ such that for every $k\ge 1$, $m\in\{0,\ldots, k-1\}$ and $K>0$,
\begin{equation}\label{eq:pastbf}
\ES[\sup_{(s,t), \tau_{k-1}\le s<t\le s+1}\frac{1}{t-s}|\Lambda_{m,k}|(s,t,W^j) |{\cal E}_k]\le C\displaystyle{\frac{\left(\sum_{\ell=m+1}^{k-1} a_\ell\right)^{H-1+\varepsilon}}{\delta_1^{\rho(k-m)}}}.
\end{equation}

\smallskip
\noindent As a consequence, if $a_k= \varsigma^{k}$ where $\varsigma\in(1,+\infty)$, there exists $\Delta\in(0,1)$ and $C>0$  such that
for every integers $m$ and $k$ with $m\le k$
\begin{equation}\label{eq:pastbf2}
\ES[\sup_{(s,t), \tau_{k-1}\le s<t\le s+1}\frac{1}{t-s}|\Lambda_{m,k}|(s,t,W^j)|{\cal E}_k]\le C\Delta^{k-m}.
\end{equation}
\end{lemme}
%{Attention, ici $\tau_{k-1}+1$ a été remplacé par $\tau_{k-1}$, vérifier si la preuve est bien adaptée.}
\begin{Remarque} $\rhd$ The assumption on the moments of $\muchap$ is only necessary for the case $m=0$ which corresponds to the interval $[-\infty,\tau_0]$. Due to the memory, $\tau_0$ is not independent of the past of the Brownian Motion before $\tau_0$. But the assumption on $\muchap$ leads to a control of the queue of $\tau_0$
which is sufficient to overcome the non-independence property.  
\smallskip
 
\noindent $\rhd$ The fact that the  quality of the estimate strongly decreases with $m-k$ may appear surprising. The main problem is that we do not have 
a sharp idea of the distribution of { ${\cal L}(W_t-W_{\tau_{m-1}},\tau_{m-1}\le t\le\tau_m)$} conditionally to the event $\{\Delta \tau_l<+\infty, m\le l<k\}$ and thus, we compensate this failure by some Hölder-type inequalities. 
 
$\rhd$ The second statement says that if one waits sufficiently between each trial, the influence of the past decreases geometrically with $m$. Note that this waiting time increases geometrically. This may be a problem for the sequel and the fact that $\varsigma$ can be chosen arbitrarily close to $1$ will be of first importance.
\end{Remarque}
\begin{proof}
First, note that if \eqref{eq:pastbf} is true, \eqref{eq:pastbf2} easily follows: let $\varsigma>1$ and let $\gamma_1\in(0,+\infty)$ be  such that 
$\varsigma=\delta_1^{-\gamma_1}$.  It is now sufficient to remark that for every $m\in\{1,\ldots, k-2\}$,
$$(\sum_{\ell=m+1}^{k-1} a_\ell)^{H-1+\varepsilon}\delta_1^{-\rho(k-m)}\le \delta_1^{\left(\gamma_1(1-H-\varepsilon)-\rho\right)(k-m)}$$
and to choose for instance $\varepsilon=(1-H)/2$ and $\rho=\frac{\gamma_1(1-H)}{4}\wedge\frac{1}{2}$ so that 
$$\gamma_1(1-H-\varepsilon)-\rho\ge \frac{\gamma_1(1-H)}{4}\wedge \frac{1}{2}>0.$$
Let us now prove \eqref{eq:pastbf}. We consider  three cases:

\smallskip
\noindent\textbf{Case 1}: $k\ge3$ and $m\in\{1,\ldots,k-2\}$.
 $m\in \EN$. In harmony with Lemma \ref{lemma:lembrown}, we decompose 
$[\tau_{m-1},\tau_m]$ in four intervals $[\tau_{m}^i, \tau_{m}^{i+1}]$,  $i\in\{0,1,2,3\}$, and also cut $\Lambda_{m,k}$ (which does not depend on $k$ in this case)
in four parts denoted by $\phi_{m,i}$:
\bqne
\forall i\in\{0,1,2,3\},\quad\phi_{m,i}(s,t,W^j)=
\int_{\tau_{m}^i}^{\tau_{m}^{i+1}} (t-r)^{H-\frac{1}{2}}-(s-r)^{H-\frac{1}{2}} d W^j_r.
\eqne 
By Lemma \ref{lemme:IPP}, 

$$ \frac{1}{t-s}|\phi_{m,i}(s,t,W^j)|\le (t-\tau_{m}^i)^{H-\frac{3}{2}}|W_{\tau_{m}^i}-W_{\tau_{m}^{i+1}}|+\frac{1}{2}\int_{\tau_{m}^i}^{\tau_{m}^{i+1}}
 (s-r)^{H-\frac{5}{2}}|W_r-W_{\tau_{m}^{i+1}}|dr.$$
If $\tau_{k-1}\le s <t$, since $\Delta \tau_l\ge a_l$ for all $l$, we get
$$t-\tau_{m}^i\ge \max (\sum_{\ell=m+1}^{k-1} a_\ell, \tau_{m}^{i+1}-\tau_{m}^i)\;\textnormal{and}\;   \forall r\in[\tau_{m}^i,\tau_{m}^{i+1}],\; s-r\ge \max (\sum_{\ell=m+1}^{k-1} a_\ell, 1+\tau_{m}^{i+1}-r).$$
Thus, for every $\varepsilon\in(0,1-H)$,
$$\sup_{(s,t),  { \tau_{k-1}}\le s<t\le s+1} \frac{1}{t-s}|\phi_{m,i}(s,t,W^j)|\le C (\sum_{\ell=m+1}^{k-1} a_\ell)^{H-1+\varepsilon} \Xi_{m,\varepsilon,i}$$
where 
$$\Xi_{m,\varepsilon,i}=(\tau_{m}^{i+1}-\tau_{m}^i)^{-\frac{1}{2}-\varepsilon}|W^j_{\tau_{m}^i}-W^j_{\tau_{m}^{i+1}}|+\frac{1}{2}\int_{\tau_{m}^i}^{\tau_{m}^{i+1}} (1+\tau_{m}^{i+1}-r)^{-\left(\frac{3}{2}+\varepsilon\right)}|W^j_r-W^j_{\tau_{m}^{i+1}}|dr.$$
The interesting point is that $\Xi_{m,\varepsilon,i}$ does not depend on $k$. Furthermore, by Lemma \ref{lemma:lembrown}, for every $p>1$, $\varepsilon\in(0,1-H)$
and $i\in\{0,1,2,3\}$,
$$\ES[\left|\Xi_{m,\varepsilon,i}\right|^p|{\cal E}_m]^{\frac{1}{p}}\le C$$
where $C$ does only depend on $p$, $\varepsilon$ and $H$.
Set $\Xi_{m,\varepsilon}=\sum_{i=0}^3 \Xi_{m,\varepsilon,i}$. Summing up the previous controls (on $i$), we deduce from Hölder inequality that for every $p>1$ and $q>1$ such that $\frac{1}{p}+\frac{1}{q}=1$,
\bqn\label{eq:rettopast}
\begin{split}
0\le \ES[\sup_{(s,t), { \tau_{k-1}}\le s<t\le s+1} \frac{1}{t-s}|\Lambda_{m,k}(s,t,W^j)||{\cal E}_{k}]&\le \ES[|\Xi_{m,\varepsilon}|^p|{\cal E}_{m}]^{\frac{1}{p}}(\sum_{\ell=m+1}^{k-1} a_\ell)^{H-1+\varepsilon}\PE({\cal E}_k|{\cal E}_{m})^{\frac{1}{q}-1}\\
&\le C \left(\frac{1}{\delta_1}\right)^{\left(1-\frac{1}{q}\right)(k-m)}(\sum_{\ell=m+1}^{k-1} a_\ell)^{H-1+\varepsilon}. 
\end{split}
\eqn
The result follows in this case by noticing that for every $\rho\in(0,1)$, there exists $q\in(1,+\infty)$ such that $\rho=1-1/q$.
\smallskip

\noindent\textbf{Case 2}: $k\ge2$ and $m=k-1$. It corresponds to the integral on the interval $[\tau_{k-2},\tau_{k-1}-1]$. The proof is almost identical using the controls
$$t-\tau_{m}^i\ge 1+\tau_{m}^{i+1}\wedge (\tau_{k-1}-1)-\tau_{m}^i\quad\textnormal{and}\quad   \forall r\in[\tau_{m}^i,\tau_{m}^{i+1}\wedge(\tau_{k-1}-1)],  \, s-r\ge  1+\tau_{m}^{i+1}\wedge (\tau_{k-1}-1)-r.$$
We do not detail it.

\smallskip
\noindent\textbf{Case 3}: $k\ge1$ and $m=0$. It corresponds to the integral on the interval $(-\infty,\tau_0]$ if $k\ge2$ and $(-\infty,\tau_0-1]$ if $k=1$. For the sake of simplicity, we only consider the case $k\ge2$. Note that on this interval $W^1=W^2$. We then write $W$ only. By Lemma \ref{lemme:IPP} and the fact that $\lim_{M\rightarrow+\infty} M^{-1/2-\varepsilon} W_{-M}=0$, we have

$$ \frac{1}{t-s}|\Lambda_{0,k}(s,t,W)|\le\frac{1}{2}\int_{-\infty}^{\tau_0}
 (s-r)^{H-\frac{5}{2}}|W_r-W_{\tau_0}|dr$$
 so that 
 $$\sup_{\tau_{k-1}\le s<t\le \lfloor s\rfloor+1}\frac{1}{t-s}|\Lambda_{0,k}(s,t,W)|\le\frac{1}{2}\left(\sum_{m=1}^{k-1} a_k\right)^{H-1+\varepsilon}\int_{-\infty}^{\tau_0}
 (1+\tau_0- r)^{-\frac{3}{2}-\varepsilon}|W_r-W_{\tau_0}|dr$$
where $\varepsilon\in(0,1-H)$. Let $p\ge1$. As remarked previously, one has no information about the joint law of $\tau_0$ and $W^j$. We compensate this failure by a rough argument. Using Cauchy-Schwarz inequality,
\begin{align*}
\ES_{\muchap}\Big[\Big(\int_{-\infty}^{\tau_0}
 (1+\tau_0- r)^{-\frac{1}{2}-\varepsilon}&|W_r-W_{\tau_0}|dr\Big)^p\Big]\\
&\le\sum_{u=1}^{+\infty}\ES\left[\left(\int_{-\infty}^{u}
 (1+u-r)^{-\frac{3}{2}-\varepsilon}|W_r-W_{u}|dr\right)^{2p}\right]^{\frac{1}{2}}\PE_{\muchap}({\tau_0= u})^{\frac{1}{2}}.
\end{align*}
Thanks to the stationarity of the increments of the Brownian motion, we deduce from a change of variable that 
$$\ES\left[\left(\int_{-\infty}^{u}
 (1+u-r)^{-\frac{3}{2}-\varepsilon}|W_r-W_{u}|dr\right)^{2p}\right]\le \ES\left[\left(\int_{0}^{+\infty}
 (1+r)^{-\frac{3}{2}-\varepsilon}|W_r|dr\right)^{2p}\right]=:C_p.$$
Using that for every $p\ge1$ and $\varepsilon>0$, 
$$\ES[\sup_{r\ge1} \left(\frac{|W_r|}{r^{\frac{1+\varepsilon}{2}}}\right)^{2p}]<+\infty,$$
we deduce that $C_p$ is finite. It remains to show that $\sum_{u=1}^{+\infty}\PE_{\muchap}(\tau_0= u)^{\frac{1}{2}}<+\infty$.  This property has already been proved in 
\eqref{def:tau0}.
\end{proof}
\begin{prop}\label{prop:contdubruit} Assume that Step $1$ and $2$ are carried out as described in Section 3. Let $\Psi$ satisfy  $\int\Psi(x_1)\bar\mu_1(dx_1)+\int\Psi(x_2)\bar\mu_2(dx_2)<+\infty$ and assume that $\tau_0$ is defined in terms of this function as in  \eqref{def:tau0}.  Assume there exists ${\delta_1}>0$ such that for every $m\ge1$, $\PE({\cal E}_m|{\cal E}_{m-1})\ge \delta_1\in(0,1)$ and that for every $k\ge1$, $a_k$ defined in \eqref{durationstep3} satisfies $a_k=\varsigma^{k}$ with $\varsigma>1$. Then, $\Hpdeux$ holds for every $\theta\in(1/2,H)$.
\end{prop}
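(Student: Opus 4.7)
The plan is to estimate $\|B^j\|_{\theta}^{\tau_{k-1}+u-1,\tau_{k-1}+u}$ via the Mandelbrot--Van Ness splitting already introduced at the start of Subsection \ref{subsec:hpdeux}: for each $j\in\{1,2\}$ and $\tau_{k-1}\le s<t\le \lfloor s\rfloor+1$,
$$B_t^j-B_s^j=\alpha_H\Big(\sum_{m=0}^{k}\Lambda_{m,k}(s,t,W^j)+\Gamma_1(s,t,W^j)-\Gamma_2(s,t,W^j)+\Gamma_3(s,t,W^j)\Big),$$
and treat the memory contributions $\Lambda_{m,k}$ and the local contributions $\Gamma_i$ separately. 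Since each unit interval $I_u:=[\tau_{k-1}+u-1,\tau_{k-1}+u]$ with $u\in\{1,\ldots,\Delta\tau_k\}$ lies in $[\tau_{k-1},\tau_k]$ and has length at most one, the $\theta$-H\"older seminorm of $B^j$ on $I_u$ is dominated by the sum of the seminorms of the $\Lambda_{m,k}$ and $\Gamma_i$ pieces. Moreover on ${\cal E}_k$ one has $\sum_{u=1}^{\Delta\tau_k}\rho^{\Delta\tau_k-u}\le 1/(1-\rho)$, so it suffices to bound each conditional seminorm uniformly in $u$ and $k$.

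For the memory part, the inequality $(t-s)^{-\theta}\le (t-s)^{-1}$ (valid since $t-s\le 1$ and $\theta<1$) gives
$$\|\Lambda_{m,k}\|_{\theta}^{I_u}\le \sup_{\tau_{k-1}\le s<t\le s+1}\frac{|\Lambda_{m,k}(s,t,W^j)|}{t-s}.$$
Under the hypothesis $a_k=\varsigma^k$, Lemma \ref{lemme:lambdamk} bounds the conditional expectation of this quantity on ${\cal E}_k$ by $C\Delta^{k-m}$ with $\Delta\in(0,1)$. Summing over $m\in\{0,\ldots,k\}$ yields a uniform bound $C/(1-\Delta)$, and summing over $u$ with the weights $\rho^{\Delta\tau_k-u}$ then contributes at most $C/[(1-\rho)(1-\Delta)]$ to the target quantity.

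For the local contributions, I would show that $\ES[\|\Gamma_i\|_{\theta}^{I_u}\,|\,{\cal E}_k]$ is bounded uniformly in $u$ and $k$ for $i=1,2,3$. Using H\"older's inequality with some $p>1$ together with $\PE({\cal E}_k|{\cal E}_{k-1})\ge \delta_1$ gives $\ES[X|{\cal E}_k]\le \delta_1^{-1/p}\ES[X^p|{\cal E}_{k-1}]^{1/p}$, so it is enough to control the $p$-th conditional moment of $\|\Gamma_i\|_{\theta}^{I_u}$ on ${\cal E}_{k-1}$. Each $\Gamma_i(s,t,W^j)$ is a stochastic integral, with bounded kernel up to the singularity, over an interval of length $\le 2(t-s)$ inside a neighborhood of $I_u$ contained in $[\tau_{k-1}-1,\tau_k]$; by a Kolmogorov--Chentsov argument (using moments of order strictly larger than $1/(H-\theta)$), the $\theta$-H\"older norm on $I_u$ is controlled by sufficiently high moments of the local increments $W^j_{\sigma_2}-W^j_{\sigma_1}$ of $W^j$ on sub-intervals of $[\tau_{k-1}-1,\tau_k]$. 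Those increment moments are then bounded exactly by the computations of Lemma \ref{lemma:lembrown}, according to which sub-step (Step 1, Step 2 attempts, or Step 3) of the $k$-th attempt contains $I_u$.

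The main obstacle is handling the local contributions in Steps 1 and 2 of the $k$-th attempt, where the marginal distribution of $W^j$ is distorted by the coupling (the drift $g_h$ of Lemma \ref{lemma:step1}, and the explicit decomposition $W^j=(U_j+V)\int g/\|g\|_{2}+\tilde W$ of Lemma \ref{cor:couplingexpansion}). The strategy, mirroring Lemma \ref{lemma:lembrown}, is to exploit the explicit representation of $W^j$ in terms of bounded coupling variables $U_j^{m,v}$, independent Gaussians $V_{m,v}$ and an independent Brownian motion $\tilde W^m$; combined with the domination $\PE({\cal A}_{m,\ell}|{\cal B}_{m,\ell-1})\ge 2^{-\alpha\ell-1}$ and H\"older's inequality, this produces polynomial-in-$\ell$ conditional moments for the increments and hence for $\|\Gamma_i\|_{\theta}^{I_u}$. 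Together with the geometric decay for the memory pieces, this yields $\Hpdeux$ with $C_\rho$ depending only on $\rho$, $\delta_1$, $\varsigma$, $\theta$ and $H$.
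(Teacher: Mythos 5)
There is a genuine gap, and it sits exactly where the paper has to work hardest. Your ``memory part'' applies Lemma \ref{lemme:lambdamk} for all $m\in\{0,\ldots,k\}$, but that lemma is stated (and proved) only for $m\in\{0,\ldots,k-1\}$. The excluded term $\Lambda_{k,k}$, which integrates $dW^j$ over $[\tau_{k-1}-1,\lfloor s\rfloor-1]$, is not a ``small because far in the past'' term: its integration interval runs through the whole current ($k$-th) attempt up to within one unit of $s$, so there is no factor $\bigl(\sum_{\ell}a_\ell\bigr)^{H-1+\varepsilon}$ available, and the path of $W^j$ there is precisely the one whose law is distorted by the event ${\cal E}_k=\{\tau_k<\infty\}$ you condition on. Your ``local contributions'' paragraph only covers $\Gamma_1,\Gamma_2,\Gamma_3$, which live within a couple of units of the current time, so $\Lambda_{k,k}$ is simply not treated. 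In the paper this piece gets its own argument: one writes the weighted sum as $\sum_{v}\rho^{\Delta\tau_k-1-v}Z_{k,v}$ with $Z_{k,v}=\|\Lambda_{k,k}^j\|_\theta^{\tau_{k-1}+v,\tau_{k-1}+v+1}$, decomposes ${\cal E}_k=\cup_\ell{\cal A}_{k,\ell}$ (on which $\Delta\tau_k$ is deterministic), applies Cauchy--Schwarz twice to pass from conditioning on ${\cal A}_{k,\ell}$ to conditioning on ${\cal E}_{k-1}$, uses the domination $\PE({\cal A}_{k,\ell}|{\cal E}_k)\le\delta_1^{-1}2^{-\alpha\ell}$ coming from the \emph{upper} bound on the success probability in Lemma \ref{lemme:step2.2} to sum over $\ell$, and finally bounds $\sup_v\ES[Z_{k,v}^4|{\cal E}_{k-1}]$ by an unconditional Brownian functional via the independence of $(W_{\tau_{k-1}-1+u}-W_{\tau_{k-1}-1})_{u\ge0}$ from ${\cal E}_{k-1}$, Lemma \ref{lemme:IPP} and Brownian scaling.

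A second, related weak point is your reduction ``$\sum_{u}\rho^{\Delta\tau_k-u}\le 1/(1-\rho)$ on ${\cal E}_k$, so it suffices to bound each conditional seminorm uniformly in $u$.'' This is legitimate for the terms $\Lambda_{m,k}$, $m\le k-1$, because there the seminorm is dominated by a single sup independent of $u$; but for the $u$-dependent pieces ($\Lambda_{k,k}$ and the $\Gamma_i$) the random weights $\rho^{\Delta\tau_k-u}$ are correlated with the seminorms through $\Delta\tau_k$, and uniform bounds on $\ES[\,\cdot\,|{\cal E}_k]$ for each fixed $u$ do not by themselves control the conditional expectation of the random weighted sum. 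Fixing this again requires conditioning on the events ${\cal A}_{k,\ell}$ and invoking $\PE({\cal A}_{k,\ell}|{\cal E}_k)\le\delta_1^{-1}2^{-\alpha\ell}$ -- i.e.\ exactly the ingredient announced in Remark \ref{rem:modiflem} that your proposal never uses for this purpose (you only quote the lower bound $\PE({\cal A}_{m,\ell}|{\cal B}_{m,\ell-1})\ge2^{-\alpha\ell-1}$, which serves the moment estimates of Lemma \ref{lemma:lembrown}, not the summation over $\ell$). As a side remark, for the $\Gamma_i$ themselves the paper's route is simpler than your Kolmogorov--Chentsov plan: pathwise integration by parts (Lemma \ref{lemme:IPP}) gives $G_i(w)\le C(1+\|w\|_{1/2-\varepsilon_\theta}^{0,2})$, and the same Cauchy--Schwarz conditioning argument reduces everything to fourth moments of a standard Brownian functional on a window of length two, with no need to redo the explicit coupling representation of Lemma \ref{cor:couplingexpansion}.
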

\begin{proof} First, thanks to a change of variable and to the decomposition introduced at the beginning of the current subsection \ref{subsec:hpdeux}, we have
\bqn\label{djksls}
\begin{split}
\sum_{u=1}^{\Delta \tau_k} \rho^{\Delta \tau_k -u}\|B\|_\theta^{\tau_{k-1}+u-1,\tau_{k-1}+u}&\le \sum_{m=1}^k
\sum_{u=\tau_{k-1}+1}^{\tau_k} \rho^{\tau_k -u} \|\Lambda_{m,k}^j\|_\theta^{u-1,u}\\
&+\sum_{m=1}^3
\sum_{u=\tau_{k-1}+1}^{\tau_k} \rho^{\tau_k -u} \|\Gamma_{m}^j\|_\theta^{u-1,u}
\end{split}
\eqn
%For every $u\in\{1,\ldots,\Delta\tau_k]$,
%$\|B\|_\theta^{u-1,u}\le \sum_{m=0}^k\|\Lambda_{m,k}\|_\theta^{u-1,u}+\sum_{i=1}^{3}\|\Gamma_i\|_\theta^{u-1,u}.$
where we used the notations introduced in \eqref{eq:notabus}. 
%\begin{equation}\label{eq:notabus}
%\|\Gamma_k^j\|_\theta^{a,b}=\sup_{a\le s<t\le b} \frac{|\Gamma_k(s,t,W^j)|}{|t-s|^{\theta}}\quad \textnormal{and}\quad \|\Lambda_{m,k}^j\|_\theta^{a,b}=\sup_{a\le s<t\le b} \frac{|\Lambda_{m,k}(s,t,W^j)|}{|t-s|^{\theta}}.
%\end{equation}
Thus, the idea is to bound each term of the right-hand side. 
First, for every $m\in\{0,\ldots,k-1\}$, for every $u\in\{\tau_{k-1},\ldots,\tau_k\}$,
$$\|\Lambda_{m,k}^j\|_\theta^{u,u+1}\le \sup_{\tau_{k-1}\le s<t\le \lfloor s\rfloor +1}\frac{1}{t-s}|\Lambda_{m,k}(s,t,W^j)|$$
Since the right-hand member does not depend on $u$, we deduce that for every $m\in\{0,\ldots,k-1\}$
\bqn\label{fieofjj}
\sum_{u=\tau_{k-1}+1}^{\tau_k} \rho^{\tau_k -u}  \|\Lambda_{m,k}^j\|_\theta^{u-1,u}\le \sup_{\tau_{k-1}\le s<t\le \lfloor s\rfloor +1}\frac{1}{t-s}|\Lambda_{m,k}(s,t,W^j)|\sum_{w=0}^{+\infty} \rho^w.
\eqn
Thus, by Lemma \ref{lemme:lambdamk}, it follows that for every $m\in\{0,\ldots,k-1\}$
\bqne
\ES[\sum_{u=\tau_{k-1}+1}^{\tau_k} \rho^{\tau_k -u}  \|\Lambda_{m,k}^j\|_\theta^{u-1,u}|{\cal E}_k]\le \frac{C}{1-\rho}\Delta^{k-m}
\eqne
where $\Delta\in(0,1)$. As a consequence, 
\bqne
\ES[\sum_{m=1}^{k-1}
\sum_{u=\tau_{k-1}+1}^{\tau_k} \rho^{\tau_k -u} \|\Lambda_{m,k}^j\|_\theta^{u-1,u}\\|{\cal E}_k]\le \frac{C}{(1-\rho)(1-\Delta)}.
\eqne
%
%&+\ES[\sum_{u=0}^{\Delta \tau_k} \rho^{\Delta \tau_k -u} \|\Lambda_{k,k}\|_\theta^{u,u+1}|{\cal E}_k]
%+\sum_{i=1}^{3}\ES[\sum_{u=0}^{\Delta \tau_k} \rho^{\Delta \tau_k -u} \|\Gamma_i\|_\theta^{u,u+1}|{\cal E}_k].\label{eq:secc}
%\end{align}
Keeping in mind inequality \eqref{djksls}, it remains to bound, independently of $k$ and $K$, the terms involving $\Lambda_{k,k}$ and $\Gamma_m$, $m=1,2,3$. The strategy is  different since these terms depend on the path between $\tau_{k-1}-1$ and $\tau_{k}$.  Let us begin by $\Lambda_{k,k}$. By a change of variable,
$$\sum_{u=\tau_{k-1}+1}^{\tau_k} \rho^{\tau_k -u} \|\Lambda_{k,k}^j\|_\theta^{u-1,u}=\sum_{v=0}^{\Delta \tau_k-1} \rho^{\Delta \tau_k-1 -v}
Z_{k,v}\quad\textnormal{where}\quad Z_{k,v}=\|\Lambda_{k,k}^j\|_\theta^{\tau_{k-1}+v,\tau_{k-1}+v+1}.$$
Note that $Z_{k,0}=0$. On the event ${\cal A}_{k,\ell}$, one knows that $\Delta \tau_k$ is deterministic. Denote it by $\Delta(k,\ell)$. 
Decomposing the event ${\cal E}_k$, it follows that
$$\ES[\sum_{u=\tau_{k-1}+1}^{\tau_k} \rho^{\tau_k -u} \|\Lambda_{k,k}^j\|_\theta^{u-1,u}|{\cal E}_{k}]=
\sum_{\ell\ge0}\sum_{v=1}^{\Delta(k,\ell)}\rho^{\Delta(k,\ell)-1 -v}\ES[ Z_{k,v}|{\cal A}_{k,\ell}]\PE({\cal A}_{k,\ell}|{\cal E}_k).$$
Using that ${\cal A}_{k,\ell}\subset {\cal E}_{k}\subset {\cal E}_{k-1}$ and Cauchy-Schwarz inequality, we remark that
\bqn\label{argumentrough}
\ES[ Z_{k,v}|{\cal A}_{k,\ell}] \PE({\cal A}_{k,\ell}|{\cal E}_k)\le {\ES[ Z_{k,v}^2|{\cal E}_{k}]^{\frac{1}{2}}}{\PE({\cal A}_{k,\ell}|{\cal E}_k)^{\frac{1}{2}}}
\le \frac{\ES[ Z_{k,v}^4|{\cal E}_{k-1}]^{\frac{1}{4}}}{\PE({\cal E}_{k}|{\cal E}_{k-1})^{\frac{1}{4}}}\PE({\cal A}_{k,\ell}|{\cal E}_k)^{\frac{1}{2}}.
\eqn
But $\PE({\cal E}_k|{\cal E}_{k-1})\ge \delta_1>0$ and using Lemma \ref{lemme:step2.2}, we have for every $\ell\ge 2$,
\bqn\label{pakl}
\PE({\cal A}_{k,\ell}|{\cal E}_k)=\frac{\PE({\cal A}_{k,\ell}|{\cal E}_{k-1})}{\PE({\cal E}_{k}|{\cal E}_{k-1})}\le \delta_1^{-1}\PE({\cal B}_{k,\ell-1}|{\cal E}_{k-1})\PE({\cal B}_{k,\ell}^c|{\cal B}_{k,\ell-1})\le \delta_1^{-1}2^{-\alpha \ell}
\eqn
so that
\bqn\label{dsopoids}
\ES[\sum_{u=\tau_{k-1}+1}^{\tau_k} \rho^{\tau_k -u} \|\Lambda_{k,k}^j\|_\theta^{u-1,u}|{\cal E}_{k}]\le C_{\delta_1,\rho}\sup_{v\in \EN} \ES[ Z_{k,v}^4|{\cal E}_{k-1}]^{\frac{1}{4}}
\eqn
where $C_{\delta_1,\rho}$ is a finite constant depending only on $\delta_1$ and $\rho$. Set $\varepsilon_\theta=(H-\theta)/2$. Using that $(W_{u+\tau_{k-1}-1}-W_{\tau_{k-1}-1})_{u\ge0}$ is independent of ${\cal F}_{\tau_{k-1}-1}$, we obtain for every $v\in \EN$, for every $k\in \EN$,
$$\ES[ Z_{k,v}^4|{\cal E}_{k-1}]\le \ES[(F_v(\bar{W}_u, 0\le u\le v))^4]$$
where $\bar{W}$ is a standard Brownian Motion and $F_v: {\cal C}^{\frac{1}{2}-\varepsilon_\theta}([0,v],\ER^d)\rightarrow\ER$ defined by
$$ F_v(w_u,0\le u\le v)=\sup_{v+1\le s<t\le v+2]}\frac{1}{t-s}\left|\int_{0}^{v }(t+1-r)^{H-\frac{1}{2}}-(s+1-r)^{H-\frac{1}{2}} dw_r\right|.$$
By Lemma \ref{lemme:IPP}, for every $v\ge 1$,
$$F_v(\bar{W}_u,0\le u\le v)\le v^{H-\frac{3}{2}}|\bar{W}_{v}|+\frac{1}{2}\int_0^{v} (v+1-r)^{H-\frac{5}{2}}|\bar{W}_r-\bar{W}_{v}|dr.$$
Denote by $(\tilde{W}_u)_{u\in[0,1]}$ the rescaled Brownian motion defined by  $\tilde{W}_u=\sqrt{v}\bar{W}_{u v}$. By a change of variable,  for every 
$v\ge 1$, 
$$F_v(\bar{W}_u,0\le u\le v)\le |\tilde{W}_1|+\frac{1}{2}\int_0^{1} \sqrt{v} (v+1-u v)^{H-\frac{5}{2}}|\tilde{W}_u-\tilde{W}_1| du.$$
%Checking that 
%
%
%and by Jensen Inequality (and the fact that $(v+1-u)^{H-\frac{5}{2}}\le (v-u)^{H-\frac{5}{2}}$, it follows that 
%$$\ES[(F_v(\bar{W}_u, 0\le u\le v))^4]\le 1+\frac{v^{2}{2}\int_0^1 (v+1-uv)^{4H-5} (1-u)^2 du.$$ 
Checking that 
$$\forall v\ge 1,\quad  \int_0^{1} \sqrt{v} (v+1-u v)^{H-\frac{5}{2}} du \le (\frac{3}{2}-H)^{-1},$$
one deduces that 

$$\sup_{v\ge 1}\ES[ Z_{k,v}^4|{\cal E}_{k-1}]\le \ES\left[\left(|\tilde{W}_1|+\frac{2}{\frac{3}{2}-H} \sup_{u\in[0,1]}|\tilde{W}_u-\tilde{W}_1|)^4\right)\right].$$
By plugging this inequality into \eqref{dsopoids}, this concludes the study of $\Lambda_{k,k}$.

\smallskip
\noindent As concerns $\Gamma_{i}$, $i=1,2,3$, one deduces with a similar strategy as before that it is enough to show that for $i=1,2,3$,
\bqn\label{eq:gi1}
\ES[|G_i(\bar{W}_u,0\le u\le 2)|^4]\le C,
\eqn
where, setting $h=t-s$, 
\begin{align*}
&G_1(w)=\sup_{1\le s<t\le 2}\frac{1}{(t-s)^\theta}\left|\int_{0}^{s-h} (t-r)^{H-\frac{1}{2}}-(s-r)^{H-\frac{1}{2}} dw_r\right|,\\
&G_2(w)=\sup_{1\le s<t\le 2}\frac{1}{(t-s)^{\theta}}\left|\int_{s-h}^{s} (s-r)^{H-\frac{1}{2}} dw_r\right|\quad\textnormal{and}\;\\
& G_3(w)=\sup_{1\le s<t\le 2}\frac{1}{(t-s)^{\theta}}\left|\int_{s-h}^{t} (t-r)^{H-\frac{1}{2}} dw_r\right|.
\end{align*}
By Lemma \ref{lemme:IPP}, we have 
\begin{align*}
\int_{0}^{s-h} (t-r)^{H-\frac{1}{2}}-(s-r)^{H-\frac{1}{2}} dw_r&\le |w_{s-h}-w_0|(t-s)\\
&+(t-s)^{\theta}\int_{0}^{s-h} (s-h-r)^{H-\theta-\frac{3}{2}}|w_r-w_{s-h}|dr
\end{align*}
where in the second line, we used that for every $r\in[0,{s-h}]$, $s-r\ge s-h-r$ and $s-r\ge h$.  We deduce that
\bqn\label{eq:contg1}
G_1(w)\le  C(1+\|w\|_{\frac{1}{2}-\varepsilon_\theta}^{0,2})\quad \textnormal{with}\quad\varepsilon_\theta=\frac{H-\theta}{2}.
\eqn
and \eqref{eq:gi1} follows for $i=1$. By an integration by parts, one also checks that
\begin{align*}
\left|\int_{s-h}^{s} (s-r)^{H-\frac{1}{2}} dw_r\right|&\le \|w\|_{\frac{1}{2}- \varepsilon_\theta}^{0,2}\left(h^{H-\varepsilon_\theta}+ (H-\frac{1}{2})\int_{s-h}^s (s-r)^{H-\frac{3}{2}+\frac{1}{2}-\varepsilon_\theta} dr\right)\\
&\le Ch^{H-\varepsilon_\theta}\|w\|_{\frac{1}{2}-\varepsilon_\theta}^{0,2}.
\end{align*} The previous control also holds for $G_3(w)$ and since $H-\varepsilon_\theta\ge\theta$, it follows that 
\eqref{eq:gi1} is also true for $i=2,3$.

\end{proof}
%{Reste dans cette partie à finir la preuve ci-dessus (à voir si c'est vraiment nécessaire en fonction de la réécriture potentielle de cette partie.
%Il reste aussi les notations à inclure.}
%
%
%\smallskip
%\noindent Finally, let us consider the terms of \eqref{djksls} involving $\Gamma_m$, $m=1$

%\end{proof}
%%
%%,\begin{align*}
%&\Hpun:\;\textnormal{There exists $\Psi:\ER^d\mapsto\ER_+^*$ such that $\lim_{|x|\rightarrow+\infty} \Psi(x)=+\infty$ and such that}  \\
%& \exists\rho\in(0,1)\;\textnormal{and}\; C>0\;\textnormal{such that}\; a.s., \forall x\in\ER^d, \; \Psi(X_1)\le \rho \Psi(x)+C(1+\|B\|_\theta^{0,1}).  \\
%\end{align*}On the event ${\cal E}_k$

%
\subsection{Condition \eqref{hairerassumpcond}}\label{subsec:hairerassum}
\begin{prop}\label{lemme:step3}  Let $\alpha\in(0,1/2)$  and assume that for every $k\ge1$ and $\ell\ge0$, $\Delta_3(\ell,k)=c_3 a_k 2^{\beta \ell}$ with  $\beta>(1-2\alpha)^{-1}$, $a_k=\varsigma^k$ with an arbitrary $\varsigma>1$.  Then, for each $K>0$, there is a choice of $c_3$ such that, for  every   $k\ge 0$, Condition \eqref{hairerassumpcond}
is $a.s.$ satisfied at time $\tau_k$   on the event $\{ \tau_k<+\infty \}$. In other words, for every $k\ge0$, $\PE(\Omega_{\alpha,\tau_k}^1|\tau_k<+\infty)=1$.
\end{prop}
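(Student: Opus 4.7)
The aim is to prove that, for every realisation with $\tau_k < \infty$, the quantity
$$\sup_{T\ge 0}\int_0^\infty (1+t)^{2\alpha}|({\cal R}_T g_w^{\tau_k})(t)|^2\,dt$$
is deterministically bounded above by $1$, provided $c_3$ is chosen large enough (depending on $K$, $\alpha$, $\beta$, $\varsigma$). The strategy closely follows the ``Step~3'' analysis of \cite{hairer}, adapted to our parametrisation of the waiting times.

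First I would decompose the support of $g_w^{\tau_k}$ in $(-\infty,0]$ (pulled back by $s \mapsto s+\tau_k$) according to the successive past trials $m=1,\dots,k$. Within each trial $m$, $g_w$ is supported only on the Step~1 interval $[\tau_{m-1},\tau_{m-1}+1]$ and on the Step~2 intervals $[s_{m,\ell-1},s_{m,\ell}]$, $\ell=1,\dots,\ell_m^*+1$, because $g_w\equiv 0$ on every Step~3 window. Lemmas \ref{lemma:step1}(b) and \ref{lemme:step2.2} supply explicit $L^2$-bounds on each piece: $C_K$ on Step~1, $C_K'$ on the first Step~2 interval when $\ell_m^*=1$, $2^{-2\alpha\ell}$ on successful Step~2 trials at level $\ell\ge 2$, and $(2(\ell_m^*+3))^2$ on the failed Step~2 interval at level $\ell_m^*\ge 2$.

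The main analytic ingredient I would use is a quantitative estimate on ${\cal R}_T$, analogous to the ones proved in \cite[Sections~5.2--5.3]{hairer}, of the following type: if $g$ is supported in an interval $I\subset (-\infty,-D]$ with $D>0$, then, uniformly in $T\ge 0$,
$$\int_0^\infty (1+t)^{2\alpha}|({\cal R}_T g)(t)|^2\,dt \;\le\; C\, D^{-(1-2\alpha)}\,(1+|I|)^{\kappa}\,\|g\|_{L^2(I)}^2,$$
where $\kappa$ is an exponent that will be absorbed by our decay rates. Next, I would bound the distance $D$ from each active piece in trial $m$ to $\tau_k$ from below by cumulating the Step~3 durations of trials $j\ge m$, which gives $D\ge c_3\sum_{j\ge m}\varsigma^j\ge c_3\varsigma^m$. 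Moreover, everything occurring in trial $m$ is separated from $\tau_k$ by at least the Step~3 waiting time $\Delta_3(\ell_m^*,m)=c_3\varsigma^m 2^{\beta\ell_m^*}$, which I would use as the effective lower bound for the pieces linked to level $\ell_m^*$.

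Summing the contributions piece by piece yields an upper bound of the form
$$C(K)\, c_3^{-(1-2\alpha)}\sum_{m=1}^{k}\varsigma^{-m(1-2\alpha)}\Bigl(1+\sum_{\ell\ge 1}2^{-2\alpha\ell}+(\ell_m^*+3)^2\,2^{-\beta\ell_m^*(1-2\alpha)}(1+|I_{m,\ell_m^*}|)^{\kappa}\Bigr).$$
The inner sum is bounded uniformly in $\ell_m^*$ thanks to the hypothesis $\beta(1-2\alpha)>1$, which is precisely what is needed to dominate the polynomial factor $(\ell_m^*+3)^2$ together with the interval-length correction $|I_{m,\ell_m^*}|\le c_2 2^{\ell_m^*}$ coming from the failed Step~2 trial; the outer sum converges geometrically since $\varsigma>1$ and $1-2\alpha>0$. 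Consequently the whole expression is $O(c_3^{-(1-2\alpha)})$ uniformly in $k\ge 1$, and choosing $c_3$ large enough forces it to be at most $1$, which yields the announced $a.s.$\ admissibility.

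The main obstacle I expect is the quantitative $L^2$-to-$L^2$ estimate on ${\cal R}_T$ with the correct power of $D$ and a controllable dependence on $|I|$, \emph{uniform} in the shift $T\ge 0$. This is the delicate technical part, but it proceeds exactly along the lines of the corresponding estimates in \cite{hairer} (where the same operator ${\cal R}_T$ arises from the Mandelbrot--Van~Ness inversion), and requires no substantive novelty here since the piecewise structure of our $g_w$ mirrors that of \cite{hairer}.
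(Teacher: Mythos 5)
Your proposal reaches the right conclusion, but it is organized differently from the paper's proof, and its one genuinely technical ingredient is asserted rather than supplied. The paper does not re-estimate every past trial at time $\tau_k$. It sets $u_k:=\sup_{T\ge0}\bigl(\int_0^\infty(1+t)^{2\alpha}|({\cal R}_Tg_w^{\tau_k})(t)|^2dt\bigr)^{1/2}$, notes $u_0=0$, and telescopes: by Hairer's Lemma 5.15,
$$u_k-u_{k-1}\le C\left(\frac{\tau_k-\tau_k^3}{\tau_k^3-\tau_{k-1}}\right)^{\alpha-\frac{1}{2}}\left(\int_0^{s_{k,\ell_k^*+1}-\tau_{k-1}}(1+t)^{2\alpha}|g_w(\tau_{k-1}+t)|^2dt\right)^{\frac{1}{2}},$$
the point being that the contributions of trials $1,\dots,k-1$, already accounted for in $u_{k-1}$, are automatically dominated after the origin shifts to $\tau_k$, because the forward shift of the origin is absorbed into the supremum over $T$. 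Only the increment has to be controlled: Lemmas \ref{lemma:step1}$(b)$ and \ref{lemme:step2.2} give $\bigl(\int(1+t)^{2\alpha}|g_w|^2\bigr)^{1/2}\le C(1+2^{\alpha(\ell_k^*+1)}(\ell_k^*+1))$, the ratio above is $\le C(c_2c_3^{-1}\varsigma^{-k}2^{-(\beta-1)\ell_k^*})^{\frac12-\alpha}$, the hypothesis $\beta>(1-2\alpha)^{-1}$ enters exactly as in your argument via $(\beta-1)(\tfrac12-\alpha)>\alpha$, and $c_3$ is chosen so that $C\sum_k(c_2c_3^{-1}\varsigma^{-k})^{\frac12-\alpha}\le1$. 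Your direct summation over all past trials $m$ with an absolute support-distance bound on ${\cal R}_T$ is a legitimate alternative that makes the required operator estimate explicit; the telescoping buys not having to restate or reprove that estimate and handles the uniformity in $T$ for free.

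Two points in your version need tightening. First, the estimate $\int_0^\infty(1+t)^{2\alpha}|({\cal R}_Tg)(t)|^2dt\le CD^{-(1-2\alpha)}(1+|I|)^{\kappa}\|g\|_{L^2(I)}^2$ is the entire technical content, and the unspecified exponent $\kappa$ is load-bearing: with $D\ge c_3\varsigma^m2^{\beta\ell_m^*}$ and $|I|\asymp c_22^{\ell_m^*}$, the inner sum over $\ell_m^*$ converges only if $\beta(1-2\alpha)>\kappa$, so under the stated hypothesis $\beta(1-2\alpha)>1$ you need $\kappa\le1$ (this is indeed what Hairer's computation delivers, once the weight $(1+t)^{2\alpha}$ on the input side is accounted for, but "absorbed by our decay rates" hides the fact that there is no room to spare beyond the polynomial factor). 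Second, since ${\cal R}_T$ is linear you must sum the weighted $L^2$ \emph{norms} of the pieces for each fixed $T$ and then square, not sum the squared bounds as your display suggests; summing squares directly costs a factor $k$ by Cauchy--Schwarz. The geometric decay in $m$ survives this correction, so the conclusion is unaffected, but as written the bookkeeping does not quite close.
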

\begin{proof} For every $k\ge0$, set
$$ u_k:=\sup_{T\ge0} \left( \int_0^{+\infty} (1+t)^{2\alpha}|({\cal R}_T g_w^{\tau_{k}})(t)|^2  dt\right)^{\frac{1}{2}}.$$
Since $g_w$ is  null on $(-\infty,\tau_0]$, $u_0=0$. Following carefully the proof of Lemma 5.15 in \cite{hairer} (see (5.8)  therein for notation), we check  that there exists $C>0$ such that for every  $k\ge1$,
\begin{equation*}
\begin{split}  u_{k}-u_{k-1}\le &  \, C \left(\frac{\tau_{k}-\tau^3_{k}}{\tau_{k}^{3}-\tau_{k-1}}\right)^{\alpha-\frac{1}{2}}  \left( \int_0^{s_{k,\ell_k^*+1}-\tau_{k-1}}(1+t)^{2\alpha}|g_\omega(\tau_{k-1}+t)|^2dt\right)^{\frac{1}{2}} \\
\le  & \, C\left(\frac{c_2}{c_3}\varsigma^{-k} 2^{-(\beta-1)\ell_k^*}\right)^{\frac{1}{2}-\alpha} \left( \int_0^{s_{k,\ell_k^*+1}-\tau_{k-1}}(1+t)^{2\alpha}|g_\omega(\tau_{k-1}+t)|^2dt\right)^{\frac{1}{2}} \\
\end{split}
\end{equation*}
where we used \eqref{durationstep3} and \eqref{eq:skuv} in the second inequality (By Remark \ref{rem:valeurc2}, $c_2=(C_K)^{\frac{1}{2\alpha}}$). 
But by Lemmas  \ref{lemma:step1}$(b)$ and \ref{lemme:step2.2}, for all $K>0$ and $\alpha\in(0,H)$, there exists $C>0$ such that 
$$  \left(\int_0^{s_{k,\ell_k^*+1}-\tau_{k-1}}(1+t)^{2\alpha}|g_\omega(\tau_{k-1}+t)|^2dt\right)^{\frac{1}{2}}\le C(1+2^{\alpha (\ell_k^*+1)}(\ell_k^*+1)).$$
By the condition $\beta>(1-2\alpha)^{-1}$ which ensures that $(\beta-1)(\frac{1}{2}-\alpha)>\alpha$, it follows that for every $K>0$ and $\alpha\in(0,1/2)$, there exists another constant $C>0$ (depending on $H$, $K$, $\alpha$ and $\beta$) such that 
$$\forall k\ge0,\quad u_{k+1}-u_k\le C(\frac{c_2}{c_3}\varsigma^{-k})^{\frac{1}{2}-\alpha}.$$
Choosing $c_3$ large enough in order that $C\sum_{k\ge1}(\frac{c_2}{c_3}\varsigma^{-k})^{\frac{1}{2}-\alpha}\le 1$ yields $\sup_{k\ge0} u_k\le 1$.
\end{proof}

\section{Proof of Theorem \ref{theo:principal}}\label{sec:prooftheoprinc}
Let $\alpha\in(0,1/2)$. We enforce  the assumptions of Proposition \ref{prop:imp1} and \ref{prop:minokalp}.
Assume that $X^1$ and $X^2$ have initial distributions $\mu_0$ and $\mu$ respectively, where $\mu$ denotes an  invariant distribution. {First, denoting by $\bar\mu$ its first marginal, we recall
that $\int |x|^r\bar{\mu}(dx)<+\infty$ for any positive $r$ (see Proposition 4.6 of \cite{hairer2} if $b$ is Lipschitz continuous or Proposition 3 of \cite{cohen-panloup-tindel} otherwise). It is therefore enough to show that 
 for  any initial condition $\tilde{\mu}$  of $(X^1,X^2)$  satisfying 
$\tilde{\mu}(|x_1|^r+|x_2|^r)<+\infty$  for some  $r>0$,  for each $\varepsilon>0$ there is $  C_\varepsilon>0$  such that $  \PE(\tau_\infty>t) \leq C_\varepsilon t^{-(\frac{1}{8}-\varepsilon)} $. }\smallskip

\noindent Set $k^*:=\inf\{k\ge1, \Delta \tau_k=+\infty\}.$ We have
\bqn\label{eq:doizpzoipz} \PE(\tau_\infty>t)=\PE(\tau_0+\sum_{k=1}^{+\infty}\Delta \tau_k 1_{k^*>k}>t)\le \PE(\tau_0>\frac{t}{2})+\PE(\sum_{k=1}^{+\infty}\Delta \tau_k 1_{k^*>k}>\frac{t}{2}).
\eqn 
Taking $\Psi$ such that $\Psi(x)\le C(1+|x|^r)$ (which is possible by Remark \ref{reducPsi}), by an argument  similar   to that of \eqref{eq:borntau0}  we deduce the  existence  of $C>0$ and $\gamma_0>0$ such that
$$ \PE(\tau_0>\frac{t}{2})\le C \exp(-\gamma_0 t).$$ 
%$$ \PE(\tau_0>\frac{t}{2})\le \int 1_{\{\rho^{\lfloor \frac{t}{2}\rfloor} (\Psi(x_1)+\Psi(x_2))>1\}}\mu_0(w,dx_1)\mu(w,dx_2)\PE_H(dw).$$
%where, by Proposition \ref{prop:lyapounov}, $\Psi=V^{\frac{2\theta-1}{4}}$ with $\theta\in(1/2,H)$. In particular, since $ V(x)\le C(1+|x|^2)$, one obtains for every $r>0$,
%\begin{align*}
%\PE(\tau_0>\frac{t}{2})&\le \int 1_{\{\rho^{r{\lfloor \frac{t}{2}\rfloor}} C(1+|x_1|^r+|x_2|^r)>1\}}\mu_0(w,dx_1)\mu(w,dx_2)\PE_H(dw)&
%\le C \rho^{r\lfloor\frac{t}{2}\rfloor} (1+\int |x_1|^r \bar{\mu}_0(dx_1)+\int |x_2|^2\bar{\mu}(dx_2))
%\end{align*}
%{Redite}
%where $\bar{\mu_0}$ and $\bar{\mu}$ denote the first marginales of $\mu_0$ and $\mu$. Under the assumptions of Theorem \ref{theo:principal}, there exists $r>0$ such that 
%$\int |x_1|^r\\bar{\mu}_0(dx_1)<+\infty$ and it can be checked that under the Lyapunov assumption, $\bar{\mu}$ has moments of any order. Thus, there exists positive $C$ and $\gamma_0$ such that
%$$\PE(\tau_0>\frac{t}{2})\le C \exp(-\gamma_0 t).$$
Now, let us focus on the second term on the right-hand side of \eqref{eq:doizpzoipz} and let $p\in(0,\alpha/\beta)\subset (0,1)$. By the Markov inequality and the elementary inequality $|u+v|^p\le|u|^p+|v|^p$, 
\begin{align*}
\PE(\sum_{k=1}^{+\infty}\Delta \tau_k 1_{k^*>k}>\frac{t}{2})&\le\frac{C}{t^p}\sum_{k=1}^{+\infty}\ES[ |\Delta \tau_k|^p|1_{\{k^*>k\}}]\\
&\le\frac{C}{t^p}\sum_{k=1}^{+\infty}\ES[ \ES[|\Delta \tau_k|^p|1_{\{\Delta \tau_k<+\infty\}}|{\cal F}_{\tau_{k-1}}]1_{\tau_{k-1}<+\infty}].
\end{align*}
Let us bound deterministically the above conditional expectations. On the one hand,
if Step 1 fails (including the case where $\omega\in \Omega_{K,\alpha,\tau_{k-1}}^c$), $\Delta \tau_k=1+c_3\varsigma^k$ where $\varsigma>1$ can be chosen arbitrarily. On the other hand, by Lemma \ref{lemme:step2.2}, we have for every $\ell\ge2$,
\begin{align*}
\PE({\cal A}_{k,\ell}|{\cal F}_{\tau_{k-1}}\cap\{\tau_{k-1}<+\infty\})\le  2^{-\alpha\ell}.
\end{align*}
Since by construction, $\Delta \tau_k\le  C\varsigma^k  2^{\beta{\ell}}$ (with $\beta>(1-2\alpha)^{-1}$) on ${\cal A}_{k,\ell}$, this yields
$$\ES[|\Delta \tau_k|^p|1_{\{\Delta \tau_k<+\infty\}}|{\cal F}_{\tau_{k-1}}\cap\{\tau_{k-1}<+\infty\}]\le C\varsigma^{k p} \left(\sum_{\ell=1}^{+\infty} 2^{(\beta p-\alpha)\ell}\right)\le C \varsigma^{kp}.$$
Thus, for every $p\in(0,\alpha(1-2\alpha))$,
$$\PE(\sum_{k=1}^{+\infty}\Delta \tau_k 1_{k^*>k}>\frac{t}{2})\le \frac{C}{t^p}\sum_{k=1}^{+\infty} \varsigma^{kp} \PE(k^*>k-1).$$
But 
{
$$\PE(k^*>k-1)=\prod_{m=1}^{k-1} \PE({\cal E}_{m}|{\cal E}_{m-1})=\prod_{m=1}^{k-1} (1-\PE({\cal E}_{m}^c|{\cal E}_{m-1}))$$
%\left(\PE(\Omega_{K,\alpha,\tau_{m-1}}^c|{\cal E}_{m-1})+\PE({\cal E}_m|\Omega_{K,\alpha,\tau_{m-1}})\PE(\Omega_{K,\alpha,\tau_{m-1}}|{\cal E}_{m-1})\right),$$
and by Propositions \ref{prop:imp1} and \ref{prop:minokalp} the latter applied with  (for instance)  $\varepsilon=1/2$, we have for every $m\ge1$,
$$ \PE({\cal E}_{m}^c|{\cal E}_{m-1})\ge \PE(\Delta \tau_m=+\infty|\Omega_{K,\alpha,\tau_{m-1}})\PE(\Omega_{K,\alpha,\tau_{m-1}}|{\cal E}_{m-1})\ge \frac{\delta_0}{2}$$
where $\delta_0$ is a positive number depending on $K_{\frac{1}{2}}$. It follows that
$$\PE(k^*>k-1)\le (1-\frac{\delta_0}{2})^{k-1}.$$
As a consequence,
$$\sum_{k=1}^{+\infty}\varsigma^{kp} \PE(k^*>k-1)\le \sum_{k=1}^{+\infty} \varsigma^{pk}(1-\frac{\delta_0}{2})^{k-1}<+\infty$$
if $\varsigma$ is chosen in such a way that $\varsigma^p<   (1-\frac{\delta_0}{2})^{-1}$ (This is possible since $\varsigma$ is an arbitrary number greater than $1$). Finally, for every $\alpha\in(0,1/2)$, for every $p\in\alpha(1-2\alpha)$, there exists $C>0$ such that
$\PE(\tau_\infty>t)\le C t^{-p}$. To conclude the proof, it  only remains to optimize in $\alpha$.
}

\bigskip

{ {\bf Acknowledgements: } J.Fontbona acknowledges   support of Fondecyt Grant 1150570,  Basal-CONICYT Center for Mathematical Modeling (CMM), and  Millenium Nucleus  NC120062. F.Panloup  acknowledges partial  support of CNRS, INSA Toulouse  and of CMM  during a six months visit at Santiago in 2013-2014 where part of this work was carried out. }

\appendix
\section{Control of $(X_t^1)_{t\in[\tau_k,\tau_k+1]}$ under $(K,\alpha)$-admissibility} 
We show the first point of the proof of Lemma \ref{lemma:step1}. Let $K$ and $\tilde{K}$ denote some positive constants. Let $\omega\in\Omega_{K,\alpha,\tau_k}$ and assume that $\|W^1\|_{\frac{1}{2}-{\varepsilon_\theta}}^{\tau_k,\tau_k+1}\le \tilde{K}$ with ${\varepsilon_\theta}=\frac{H-\theta}{2}$.
First, we bound $\|B^1\|_\theta^{\tau_k,\tau_k+1}$. With the notations introduced at the beginning of Section \ref{subsec:hpdeux},
\begin{align*}
\|B^1\|_\theta^{\tau_k,\tau_k+1}&\le \sup_{\tau_{k}\le s<t\le \tau_{k}+1}\frac{1}{t-s}\left|\int_{-\infty}^{\tau_k-1} (t-r)^{H-\frac{1}{2}}-(s-r)^{H-\frac{1}{2}} dW_r^1\right|\\
&+\sum_{m=1}^3\|\Gamma_m\|_\theta^{\tau_k,\tau_k+1}.
\end{align*}
The first right-hand term is bounded by $K$ since $\omega\in \Omega_{K,\alpha,\tau_k}$.
As concerns that of the second line, we deduce from the end of the proof of Proposition  \ref{prop:contdubruit} (see $e.g.$ \eqref{eq:contg1}) that 
$$\forall m\in\{1,2,3\},\quad \|\Gamma_m\|_\theta^{\tau_k,\tau_k+1}\le C(1+\|W^1\|_{\frac{1}{2}-{\varepsilon}_\theta}^{\tau_k-1,\tau_k+1}).\quad $$
Under the  assumptions, $\|W^1\|_{\frac{1}{2}-{\varepsilon}_\theta}^{\tau_{k-1},\tau_k}\le K$ ($(K,\alpha)$-admissibility) and 
$\|W^1\|_{\frac{1}{2}-{{\varepsilon}_\theta}}^{\tau_k,\tau_k+1}\le \tilde{K}$. It follows that
\bqn\label{dopqipoq}
\forall m\in\{1,2,3\},\quad\|B^1\|_\theta^{\tau_k,\tau_k+1}\le C_{K,\tilde{K}}
\eqn
where $C_{K,\tilde{K}}$ is a finite deterministic constant which does not depend on $k$.

In order to conclude the proof, it is now enough to bound $\sup_{t\in[0,1]}|X_{\tau_{k}+t}|$ with respect to $|X_{\tau_k}^1|$ and $\|B^1\|_\theta^{\tau_k,\tau_k+1}$. This point is a classical property of fractional driven SDE but we prove it for the sake of completeness. First, note that Steps $1$, $2$ and $3$ of the proof of Proposition \ref{prop:lyapounov}
still hold under the assumptions of Lemma . Set $F(x)={1+|x|^2}$. Let $\tau_{k}\le s<t\le \tau_{k}+1$ such that 
$c_0(1+\|B^1\|_\theta^{\tau_{k},\tau_{k}+1})\le 1/2$. By the change of variable formula,
\begin{align*}
F(X_{t})&=F(X_s)+\int_{s}^t (\nabla F|b)(X_u)du+\int_s^t (\nabla F(X_u)|\sigma (X_u) dB_u^1)\\
&\le F(X_s)(1+C(t-s))+|\int_s^t (\nabla F(X_u)|\sigma (X_u) dB_u^1|,
\end{align*}
where in the second line, we used that $(\nabla F|b)(x)\le C F(x)$ (since $b$ is a sublinear function) and Step 3 of the proof of Proposition \ref{prop:lyapounov}. 
The functions $\nabla F$ and $\sigma$ being Lipschitz continuous and $\sigma$ being also bounded, we obtain similarly to Step 4 of the proof of Proposition \ref{prop:lyapounov} (see \eqref{eq:contint}) that  if $t-s\le\eta$ defined by \eqref{eq:eta} (replacing $0$ and $1$ by $\tau_k$ and $\tau_{k}+1$ respectively),
\bqne
|\int_s^t (\nabla F (X_u)|\sigma(X_u) dB_u^1)|\le C (t-s)^{\frac{1}{2}+\theta} F(X_s)+ \tilde{\beta}.
\eqne
Then, it follows that for every $\tau_{k}\le s<t\le \tau_{k}+1$ such that $t-s\le\eta$
$$F(X_t)\le F(X_s)(1+C(t-s))+\tilde{\beta}.$$
An iteration of this inequality on the sequence $(\tau_k+ \ell\eta)_{\ell\ge 1 }$ yields
\begin{align*}
\forall \ell\in,\ldots,\lfloor \frac{1}{\eta}\rfloor \},\quad &F(X_{\tau_k+\ell\eta})\le F(X_{\tau_k})(1+C\eta)^{\ell}+ \tilde{\beta}\sum_{u=0}^{\ell-1} (1+C\eta)^{u}\\
&\le \exp(C) (F(X_{\tau_k})+\frac{\tilde\beta}{C\eta})\le \exp(C)(F(X_{\tau_k})+\tilde{C}{\tilde\beta}(1+\|B^1\|_{\theta}^{\tau_k,\tau_k+1})^{\frac{4}{2\theta-1}})
\end{align*}
where in the last inequality, we used that $\eta\ge (1+\|B^1\|_\theta)^{\tau_k,\tau_k+1})^{-\frac{1}{\tilde{\theta}}}$. Applying again Step 3 of the proof of Proposition \ref{prop:lyapounov} yields the existence of another constant $C$ (which does not depend on $k$) such that 
\bqn\label{eq:compx}
\sup_{t\in[\tau_k,\tau_k+1]} F(X_t)\le C (F(X_{\tau_k})+{\tilde\beta}(1+\|B^1\|_{\theta}^{\tau_k,\tau_k+1})^{\frac{4}{2\theta-1}}).
\eqn
The result follows since, by \eqref{dopqipoq}, the right-hand side is bounded by a deterministic constant depending only on $K$, $\tilde{K}$ and $\theta$ on the set $\Omega_{K,\alpha,\tau_k}\cap\{\|W^1\|_{\frac{1}{2}-{\varepsilon_\theta}}^{\tau_k,\tau_k+1}\le \tilde{K}\}$.
\section{Proof of \eqref{eq:contmemoire}}
It is enough to prove that there exists $C>0$ such that for every $k$ and $K$, $\ES[\varphi_{\tau_k,{{\varepsilon}_\theta}}(W^j(\omega))|{\cal E}_k]\le C$.
The fact that 
$$\ES[\sup_{(s,t) \tau_k\le s<t\le \tau_k+1} \frac{1}{t-s}\left|\int_{-\infty}^{\tau_k-1} (t-r)^{H-\frac{1}{2}}-(s-r)^{H-\frac{1}{2}} dW_r^j\right| |{\cal E}_k]$$
is bounded by a constant which does not depend on $k$ follows from Lemma \ref{lemme:lambdamk}. More precisely, if $k\ge1$ this property is a consequence of \eqref{eq:pastbf2} combined with the fact that  $\{(s,t), \tau_k\le s<t\le \tau_k+1\}\subset \{(s,t), \tau_{k-1}\le s<t\le s+1\}.$
If $k=0$, it corresponds to Case 3 of  the proof of Lemma \ref{lemme:lambdamk}. For the second part, if $k\ge1$, we deduce from Cauchy-Schwarz Inequality that
$$\ES[\|W^j\|_{\frac{1}{2}-\varepsilon_\theta}^{\tau_{k}-1,\tau_k}|{\cal E}_k]\le \ES[(\|W^j\|_{\frac{1}{2}-\varepsilon_\theta}^{\tau_{k}-1,\tau_k})^2|{\cal E}_{k-1}]^{\frac{1}{2}}\PE({\cal E}_k|{\cal E}_{k-1})^{-\frac{1}{2}}.$$
But using that $(W_j)_{t\in[\tau_k-1,\tau_k]}$ is independent of ${\cal E}_{k-1}$ and that $\PE({\cal E}_k|{\cal E}_{k-1})\ge\delta_1$, it follows that
$$\ES[\|W^j\|_{\frac{1}{2}-\varepsilon_\theta}^{\tau_{k}-1,\tau_k}|{\cal E}_k]\le C_\theta \delta_1^{-\frac{1}{2}}$$
where $C_\theta:= \ES[\left(\|W^j\|_{\frac{1}{2}-\varepsilon_\theta}^{0,1}\right)^2]<+\infty$. This concludes the proof.
\bibliographystyle{plain}
\bibliography{biblio_fontbona_panloup}
\end{document}